\def\NN{{\Bbb N}}
\def\ZZ{{\Bbb Z}}
\def\CC{{\Bbb C}}
\newcommand{\E}{\mathds{E}}
\renewcommand{\P}{\mathds{P}}
\newcommand{\bX}{\mathbf{X}}
\newcommand{\bU}{\mathbf{U}}
\newcommand{\bV}{\mathbf{V}}
\newcommand{\bY}{\mathbf{Y}}
\newcommand{\bD}{\mathbf{D}}
\def\Ff{\mathcal{F}}
\renewcommand{\Re}{{\Bbb R}}
\def\eps{\varepsilon}
\def\1{1\!\!\hbox{{\rm I}}}
\def\ax{\Re^+}
\newcommand{\di}{\mathrm{d}}
\newcommand{\be}{\begin{equation}}
\newcommand{\ee}{\end{equation}}
\newcommand{\ba}{\begin{aligned}}
\newcommand{\ea}{\end{aligned}}
\newcommand {\ov}[1]{ \overline{#1}}
\newcommand {\wt}[1]{ \widetilde{#1}}
\newcommand {\wh}[1]{ \widehat{#1}}
\newcommand{\la}{\langle}
\newcommand{\ra}{\rangle}
\newcommand{\norm}[1] {|\!|\!|#1|\!|\!|}
\theoremstyle{plain}
\newtheorem{thm}{Theorem}[section]
\newtheorem{lem}{Lemma}[section]
\newtheorem{prop}{Proposition}[section]
\theoremstyle{definition}
\newtheorem{rem}{Remark}[section]
\numberwithin{equation}{section}
\begin{document}

\title{Well-Posedness,   Stability, and Sensitivities  for Stochastic Delay Equations: A Generalized Coupling Approach}

\author{%
       \textsc{Alexei Kulik}
   \thanks{Institute of Mathematics, NAS of Ukraine, 3, Tereshchenkivska str., 01601  Kiev,
    \texttt{kulik.alex.m@gmail.com}}
         \quad \textsc{Michael Scheutzow}
\thanks{{Technische Universit\"at Berlin,
Institut f\"ur Mathematik, MA 7-5, Fakult\"at II,
Strasse des 17.~Juni 136, 10623 Berlin, \texttt{ms@math.tu-berlin.de}. Supported in part by DFG Research Unit FOR 2402  \bigskip
}}
   }


\maketitle
\begin{abstract}
We develop a new generalized coupling approach to the study of stochastic delay equations with  H\"older continuous coefficients,  for which analytical  PDE-based methods are not available. We prove that such equations possess unique weak solutions, and establish weak ergodic rates for the corresponding segment processes. We also prove, under additional smoothness assumptions on the coefficients, stabilization rates for the sensitivities in the initial value of the corresponding semigroups.
\end{abstract}

\section{Introduction}
In this paper we introduce a new technique which makes it possible to study stochastic equations whose coefficients are assumed to be only \emph{H\"older continuous}, and which does not rely  on analytical results from the PDE theory. The analytic approach to the study of diffusion processes dates back to Kolmogorov, and nowadays is a common tool for the analysis of SDEs with low regularity of coefficients; e.g. \cite{Stroock_Varad}. For stochastic systems of more complicated structure, e.g. those described by  stochastic equations with delay, this approach is not realistic because of the necessity to study PDEs in (infinite-dimensional) functional spaces. For such systems, the It\^o-L\'evy stochastic approach is typically used which requires (one-sided local) Lipschitz continuity of the coefficients; e.g.~\cite{M84} or \cite{RS08}.  The current  paper shows that the range of application of the standard stochastic analysis tools can be substantially extended, including {delay} equations with low regularity of coefficients.

Our approach is based on the concept of \emph{generalized coupling}, which extends the classical notion of \emph{coupling} in the following way. By definition, a coupling is a probability measure on a product space with prescribed marginal distributions. For a \emph{generalized} coupling the marginals satisfy instead milder deviation bounds from the prescribed distributions. The class of generalized couplings is much wider than of classical couplings, and it is typically much easier to construct  for a given system a generalized coupling with desired properties than a true one, for more details see Section \ref{s3} below. This makes generalized couplings quite an efficient tool in the ergodic theory of Markov processes, see the recent paper \cite{BKS18}  where they were used as a key ingredient in the construction of contracting/nonexpanding  distance-like functions for complicated SPDE models.

In \cite{BKS18}, generalized couplings were first constructed using stochastic control arguments, and then used for the construction of true couplings; in this last step the change of
the marginal laws caused by the control terms was in a sense reimbursed. We call this type of argument a \emph{Control-and-Reimburse (C-n-R)} strategy. The same general idea ---
to apply a stochastic control in order to improve the system, and then to take into account the impact of the control --- is scattered in the literature; e.g. it is used in  \cite[Section~5.2]{HMS11} in a construction of of contracting/nonexpaning distance-like function $d(x,y)$ for delay equations, in \cite{H02} in an approach to the study of weak ergodicity of SPDEs, in \cite{AVer} in the proof of  ergodicity in total variation  for degenerate diffusions, and in  \cite{BodKul08} in the proof of  ergodicity in total variation for solutions to L\'evy driven SDEs. Related ideas were used to establish the Harnack inequality for SDEs and SFDEs \cite{Wang, ERS09}.

We further develop this general idea in the following two directions. First, we show that the Control-and-Reimburse (C-n-R) strategy  is well applicable under just  H\"older continuity assumptions on the coefficients (actually, one-sided H\"older continuity for the drift). This makes it possible to establish ergodic rates for delay equations with non-Lipschitz coefficients; moreover, essentially the same  generalized coupling construction allows one to
prove well-posedness of the system, i.e.~that the weak solution to the equation is uniquely defined and the corresponding \emph{segment process}  is a time-homogeneous Markov  process with the Feller property. Second, we establish  stabilization rates for \emph{sensitivities} for the model; that is, for the derivatives of the semigroup rather than for the semigroup itself. The natural and commonly adopted way to get such rates in a finite-dimensional setting is based on the Bismut-Elworthy-Li type formulae (\cite{Bismut}, \cite{ElLi}) which give integral representations of sensitivities based on the integration-by-parts formulae. Such a \emph{regularization} effect in an infinite-dimensional setting becomes much more structure demanding, since the random noise (which is the source of the integration-by-parts formula) needs to be non-degenerate in the entire space; for one result of such type and a detailed discussion we refer to  \cite{Cer99}, where reaction-diffusion equations with a cylindrical noise are considered. In the delay case the noise is finite-dimensional and thus is strongly degenerate; hence the Bismut-Elworthy-Li type formula for the (Fr\'echet) derivatives of the semigroup is hardly available. Nevertheless, employing the C-n-R strategy we are able to derive a family of representation formulae for these derivatives, which can be understood as ``poor man's Bismut-Elworthy-Li type formulae'', see \eqref{der_rep} and \eqref{int_rep_der_k}. Namely, these formulae are not completely free from  gradient terms like $\nabla f$, but the weights in the corresponding integral expressions can be forced to decay exponentially fast at an arbitrarily large rate. Using these representation formulae we manage to establish  stabilization rates for sensitivities (derivatives) of arbitrary order; note that the (full) regularization effect now has no reason to appear, and thus for these results we have to assume certain smoothness of the coefficients.

The structure of the paper is the following. In Section \ref{s2} the main results are formulated and briefly discussed. To make the exposition transparent, we explain in a separate Section \ref{s3} the cornerstones of the proofs. The detailed proofs of the three main groups of results are given in Section \ref{s4}, Section \ref{s5}, and Section \ref{s6}, respectively.

\noindent \textbf{Acknowledgments}. The authors are grateful to Ren\'e Schilling for helpful discussions; the first version of Lemma \ref{lem1} was actually suggested by him.  The work on the project has been finished  during the visit of AK to the Technical University of Dresden (Germany); AK is very grateful  to the Technical University of Dresden and especially to Ren\'e Schilling for their support and hospitality.

\section{Main results}\label{s2}
\subsection{Weak solution: existence and uniqueness}

Let $n\in\NN$ and $r>0$. Denote by $\CC=C([-r,0],\Re^n)$ the space of continuous functions with the supremum norm $\|\cdot\|$. For a stochastic process $X=\{X(t), t\geq -r\}$ in $\Re^n$ define the corresponding \emph{segment process} $\bX=\{\bX_t, t\geq 0\}$ in $\CC$ by
$$
\bX_t=\{X(t+s), s\in[-r,0]\}\in \CC, \quad t\geq 0.
$$
Consider the stochastic delay differential equation (SDDE)
\be\label{SDDE}
\di X(t)=a(\bX_t)\, \di t +\sigma(\bX_t)\, \di W(t),\quad t\ge0,
\ee
with the  initial condition $\bX_0=x\in \CC$. Here $W$ is a Brownian motion in $\Re^m$, $m\geq 1$, and $a:\CC\to\Re^n$ and $\sigma:\CC\to\Re^{n\times m}$ are given functions. We will focus on \emph{weak} solutions; that is, processes $X$ with continuous trajectories such that \eqref{SDDE} holds true with  \emph{some} Wiener process $W$.

Our main assumptions are listed below.

$\mathbf{H}_1.$ The function $a$ is continuous, bounded on bounded subsets of $\CC$, and satisfies the following \emph{finite range one-sided H\"older conditio}n with index $\alpha>0$: there exists $C$ such that
\be\label{H_gamma}
\Big(a(x)-a(y), x(0)-y(0)\Big)\leq C\|x-y\|^{\alpha+1}, \quad \|x-y\|\leq 1.
\ee
Here and below we denote the scalar product in $\Re^n$ by $(\cdot, \cdot)$.

$\mathbf{H}_2.$ The function $\sigma$ satisfies the following \emph{finite range H\"older condition} with index $\beta>1/2$:
\be\label{H_delta}
\norm{\sigma(x)-\sigma(y)}\le  C\|x-y\|^\beta, \quad \|x-y\|\leq 1.
\ee
Here and below  $\norm{\cdot}$ denotes the Frobenius norm of a matrix, $\norm{M}:=\sqrt{\sum M_{ij}^2}$.

$\mathbf{H}_3.$ For each $x\in \CC$ there exists a right inverse $\sigma(x)^{-1}$ of the matrix $\sigma(x)$, and
\begin{equation}\label{nd}
\sup_{x\in \CC} \norm{\sigma(x)^{-1}}<\infty.
\end{equation}

$\mathbf{H}_4.$ The following  one-sided linear growth bound for $a$ holds:
\be\label{gba}
\Big(a(x), x(0)\Big)\leq C(1+\|x\|^{2}), \quad x\in \CC.
\ee

Note that a similar linear growth bound for $\sigma$ holds true by \eqref{H_delta}:
\be\label{gbsigma}
\norm{\sigma(x)}\leq C(1+\|x\|), \quad x\in \CC.
\ee

\begin{thm}\label{t1} Assume $\mathbf{H}_1$ -- $\mathbf{H}_4$. Then the following statements hold.

 \begin{enumerate}
   \item[1.] For any $x\in \CC$ there exists a weak solution $X$ to \eqref{SDDE} with $\bX_0=x$.

   \item[2.] The weak solution to \eqref{SDDE} is unique in law; that is, any two such solutions with the same initial segment $x\in \CC$ have the same law in $C([-r, \infty), \Re^n)$.

   \item[3.] The segment process $\bX$, which corresponds to the weak solution to \eqref{SDDE},   is a time-homogeneous Markov process in $\CC$, which has the Feller property.
 \end{enumerate}
  \end{thm}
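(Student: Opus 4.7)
My plan is to handle the three parts in sequence: existence by a smoothing and weak-compactness argument, uniqueness in law by the Control-and-Reimburse (C-n-R) generalized coupling promised in the introduction, and the Markov/Feller properties as standard consequences. The uniqueness step is the main obstacle, since analytical weak-uniqueness arguments of Stroock--Varadhan type are not available in the infinite-dimensional path space $\CC$.

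\textbf{Existence.} Mollify the coefficients to obtain globally Lipschitz $a_k, \sigma_k$ that converge uniformly on bounded subsets of $\CC$ and preserve \eqref{gba} and \eqref{gbsigma} with constants uniform in $k$. For each $k$, standard SDDE theory yields a strong solution $X^k$ with $\bX^k_0 = x$; It\^o's formula applied to $|X^k(t)|^2$ together with the linear growth bounds gives $\sup_k \E \sup_{t\le T}|X^k(t)|^p < \infty$ for every $p \ge 2$ and $T > 0$, as well as H\"older-in-time increment bounds of order $1/2$. Kolmogorov's criterion then yields tightness of $\{\mathrm{Law}(X^k)\}$ on $C([-r,T],\Re^n)$; passing to a weak limit along a subsequence, and using continuity of $a, \sigma$ together with uniform integrability from the moment bounds, I identify the limit as a weak solution via the martingale-problem characterization.

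\textbf{Uniqueness in law.} Given two weak solutions $X^{(i)}$, $i = 1, 2$, starting from the same $x$, the C-n-R construction couples them on a single probability space as follows. I introduce an auxiliary process $Y$ whose dynamics include a shadow drift $\lambda(X^{(1)}(t) - Y(t))$ designed to keep $Y$ close to $X^{(1)}$ pathwise, while under a Girsanov-changed measure $\Q$, with control $\varphi(t) = \lambda\sigma(\bY_t)^{-1}(X^{(1)}(t) - Y(t))$ admissible by $\mathbf{H}_3$, the process $Y$ becomes a bona fide weak solution of \eqref{SDDE} from $x$. The pathwise smallness of $Z := X^{(1)} - Y$ follows from It\^o's formula on $|Z|^2$, the one-sided H\"older estimate $\mathbf{H}_1$, and the H\"older estimate $\mathbf{H}_2$, where $\beta > 1/2$ is precisely what allows the quadratic-variation term $\lesssim |Z|^{2\beta}$ to be absorbed into the contracting $-\lambda |Z|^2$ term for large $\lambda$. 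Novikov's criterion applies thanks to \eqref{nd} and the decay of $Z$, so the entropic reimbursement is affordable. Combining the pathwise coupling bound with the entropy cost of $\varphi$ produces a total-variation bound between the law of $X^{(1)}$ and the law of any weak solution from $x$, including $X^{(2)}$, on each $C([-r,T],\Re^n)$; sending $\lambda \to \infty$ forces equality, and a monotone class argument extends it to $C([-r,\infty),\Re^n)$. The delicate balancing of $\mathbf{H}_1$, $\mathbf{H}_2$ (with $\beta > 1/2$), and $\mathbf{H}_3$ while the coefficients depend on the full segment rather than the current value is what makes this step the technical heart of the proof.

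\textbf{Markov and Feller properties.} With uniqueness in law in hand, the Markov property is standard: conditioning on $\Ff_t$ and applying uniqueness to the shifted solution with initial segment $\bX_t$ shows that $\E[f(\bX_{t+s}) \mid \Ff_t]$ is a deterministic function of $\bX_t$, and time-homogeneity is automatic because $a, \sigma$ do not depend on time. For the Feller property, if $x_n \to x$ in $\CC$ the uniform moment bounds from the existence step applied along $x_n$ give tightness of $\{\mathrm{Law}(\bX^{x_n})\}$; every subsequential weak limit is a weak solution from $x$ by the martingale-problem closure argument, hence equals $\bX^x$ in law by uniqueness, so $\bX^{x_n} \Rightarrow \bX^x$ and $x \mapsto \E f(\bX^x_t)$ is continuous for every $f \in C_b(\CC)$.
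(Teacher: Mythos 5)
Your existence argument and your derivation of the Markov/Feller properties from uniqueness are sound and close to the paper's (the paper gets Feller from uniform-on-compacts convergence of the approximating semigroups rather than from tightness plus uniqueness, but both routes are standard). The problem is the uniqueness step, which is the heart of the theorem, and your version of it has genuine gaps.

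First, your auxiliary process $Y$ is defined as a solution of an SDDE whose coefficients are the original H\"older-continuous $a,\sigma$ plus the shadow drift $\lambda(X^{(1)}(t)-Y(t))$. Well-posedness of such an equation is exactly what is in question, so the construction is circular. The paper avoids this by running the controlled equation with \emph{Lipschitz approximations} $a^\eps,\sigma^\eps$ of the coefficients (equation \eqref{SDDE-X-eps-ups}), for which the strong solution $Y^\eps$ exists unconditionally. Second, since $X^{(1)}$ and $Y$ start from the same segment $x$, there is no small parameter in your scheme: the Girsanov cost of the control $\varphi=\lambda\sigma(\bY)^{-1}(X^{(1)}-Y)$ is of order $\lambda^2\,\E\int|X^{(1)}-Y|^2$, and nothing forces this to vanish as $\lambda\to\infty$ --- the pathwise gap $|X^{(1)}-Y|$ is not $o(1/\lambda)$, because $X^{(1)}$ is only a weak solution and pathwise uniqueness fails, so $X^{(1)}-Y$ does not stay at zero. ``Sending $\lambda\to\infty$ forces equality'' is therefore unsubstantiated. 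Third, even granting the construction, Girsanov only tells you that $Y$ under $\Q$ has the law of \emph{some} weak solution from $x$; without uniqueness already in hand this is not the law of $X^{(2)}$, so you never actually compare the two given solutions.

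The paper's resolution is structurally different: it does not couple two weak solutions at all. Given an arbitrary weak solution $X$ with its Brownian motion $W$, it builds $Y^\eps$ solving the Lipschitz-coefficient equation driven by the same $W$ with control $\upsilon_\eps^{\gamma-1}(X(t)-Y^\eps(t))1_{t\le\tau_\eps}$, where $\upsilon_\eps$ quantifies the coefficient approximation error $\sup|a^\eps-a|^{1/\alpha}\vee\sup\norm{\sigma^\eps-\sigma}^{1/\beta}$ on a compact set of segments. The control compensates the coefficient mismatch, not the difference of two solutions: Lemma \ref{lem1} gives $\sup_{t\le T}|Y^\eps-X|\le\upsilon_\eps$ off an exponentially small event, while the control magnitude is bounded by $\upsilon_\eps^\gamma\to0$, so by Girsanov $d_{TV}(\mathrm{Law}(Y^\eps),\mathrm{Law}(X^\eps))\to0$, where $X^\eps$ is the uniquely determined strong solution of the uncontrolled Lipschitz equation. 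Hence $\mathrm{Law}(X)=\lim_\eps\mathrm{Law}(X^\eps)$ is pinned down independently of which weak solution one started from. If you want to salvage your write-up, you must introduce this approximation layer; the two-solution coupling with $\lambda\to\infty$ does not close.
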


 The main  difficulty in this theorem is the uniqueness statement 2. We note that by a slight modification of the proof one can get the same result
 assuming $a$ being just continuous and bounded on bounded subsets (that is, allowing $\alpha=0$ in $\mathbf{H}_1$). This minor improvement however does not apply to Theorem \ref{t2} below, and in order to keep the exposition reasonably short we thoroughly explain the one generalized coupling construction which suites well for both these results, and requires $\alpha>0$.

\subsection{Ergodic rates for the segment process} Let $d(\cdot, \cdot)$ be a metric on $\CC$. The corresponding \emph{coupling} (or \emph{minimal}) distance on the set $\mathcal{P}(\CC)$ of probability distributions on $\CC$ is given by
\begin{equation}\label{coupling_distance}
d(\mu,\nu)=\inf_{\lambda\in\mathcal{C}(\mu,\nu)}\int_{\CC\times \CC} d(x,y) \lambda(\di x,\di y),\quad \mu,\nu\in\mathcal{P}(\CC).
\end{equation}
Here $\mathcal{C}(\mu,\nu)$ denotes the set of all \textit{couplings} between $\mu$ and $\nu$, i.e. probability measures on $\CC\times \CC$ with marginals $\mu$ and $\nu$. In what follows, we will consider $d(\cdot, \cdot)$ on $\CC$ which is equivalent to the usual distance $\|\cdot-\cdot\|$ and is bounded. In this case the corresponding coupling distance is a metric, and convergence in this metric is equivalent to weak convergence in $\mathcal{P}(\CC)$. The famous Kantorovich-Rubinshtein theorem provides an alternative expression for $d(\mu, \nu)$: denote for $f:\CC\to \Re$
\be\label{Lip_comt}
\|f\|_{\mathrm{Lip}_d}=\sup_{x\not=y}{|f(x)-f(y)|\over d(x,y)},
\ee
then
\be\label{KR_thm}
d(\mu, \nu)=\sup_{f:\|f\|_{\mathrm{Lip}_d}=1}\left|\int f\di \mu-\int f \di \nu\right|.
\ee
In the literature, $d(\mu,\nu)$ is frequently called the \textit{1-Wasserstein} distance, though the name \textit{Kantorovich} distance is historically more appropriate.

In this section we will establish weak ergodic rates  for the segment process $\bX_t, t\geq 0$ with respect to a properly chosen coupling distance $d(\cdot, \cdot)$. That is, we will give sufficient conditions for $X$ to have a unique invariant probability measure (IPM) $\pi$ and quantitative bounds for the convergence
$$
d(P_x^t, \pi)\to 0, \quad t\to 0;
$$
here and below we denote by
$$
P_x^t(A)=\P_x(\bX_t\in A), \quad A\in \mathcal{B}(\CC), \quad x\in \CC, \quad t\geq 0
$$
the transition probability for the segment process.  We adopt the method introduced in \cite{HMS11} and further developed in \cite{Bu14}, \cite{DFM}, \cite[Chapter 4]{K17}. The method is  based on a proper combination of contraction, non-expansion, and recurrence properties, which we briefly explain here.  Fix a time discretization step $h>0$ and consider the \emph{skeleton chain} $\bX^h=\{\bX_{kh}, k\in \ZZ_+\}$ for the segment process $\bX$. The distance $d(x,y)$ is called \emph{contracting for $\bX^h$ on a set $B\subset \CC\times \CC$}, if there exists $\theta\in (0,1)$ such that
\be\label{contracting}
d(P_x^h, P_y^h)\leq \theta d(x,y), \quad (x,y)\in B.
\ee
The distance $d(x,y)$ is called \emph{ non-expanding for $\bX^h$}, if
\be\label{non-expanding}
d(P_x^h, P_y^h)\leq d(x,y), \quad x,y\in \CC.
\ee
With a slight abuse of terminology, we will say that a set $K\subset\CC$ is \emph{$d$-small for $\bX^h$} if $d$ is non-expanding for $\bX^h$ and is contracting on $K\times K$ (this definition differs from the original one {\cite[Definition~4.4]{HMS11}}, but has essentially the same scope and is technically more convenient).

The crucial question in the entire approach is how to construct  a non-expanding metric $d$, which in addition is contracting on a sufficiently large class of sets.
The following theorem, which is the main result of this section, resolves this question for the SDDE \eqref{SDDE}.
Denote for $x,y\in \CC$
$$
d_{N,\gamma}(x,y)=\Big(N\|x-y\|^{\gamma}\Big)\wedge 1,  \quad N\geq 1, \quad \gamma\in (0,1].
$$
Clearly, each $d_{N, \gamma}$ is a  metric on $\CC$.

\begin{thm}\label{t2} I. Assume $\mathbf{H}_1$ -- $\mathbf{H}_4$.  Then for any $h>r$ and
$
\gamma \in \big(0,\alpha\wedge (2\beta-1)\big)
$
there exists $N_{h,\gamma}$ such that for any $N\geq N_{h,\gamma}$ any bounded set $K\subset \CC$ is $d_{N,\gamma}$-small for $\bX^h$.

II. Assume in addition that the following stronger version of $\mathbf{H}_4$ holds true:
\be\label{gba1}
\Big(a(x), x(0)\Big)\leq C(1+|x(0)|^{2}), \quad x\in \CC,
\ee
\be\label{gbsigma1}
\norm{\sigma(x)}\leq C(1+|x(0)|), \quad x\in \CC.
\ee
Then for any $h>r$ and  positive
$
\gamma<\alpha\wedge (2\beta-1)
$
there exists $N_{h,\gamma}$ such that for any $N\geq N_{h,\gamma}$ each set
$$
H_c=\{x\in \CC:|x(0)|\leq c\}, \quad c\geq 0
$$
is $d_{N,\gamma}$-small for $\bX^h$.
\end{thm}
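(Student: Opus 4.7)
The plan is to construct a single generalized coupling of the Control-and-Reimburse type and use it to verify both the non-expansion of $d_{N,\gamma}$ on $\CC\times\CC$ and the contraction on the relevant ``small'' sets. Given $x,y\in\CC$, let $X$ be the weak solution of \eqref{SDDE} starting from $x$ with respect to a Brownian motion $W$; on the same space I would introduce a companion $Y$ with $\bY_0=y$ solving
\begin{equation*}
\di Y(t) = a(\bY_t)\,\di t + \sigma(\bY_t)\bigl[\di W(t)+\psi(t)\,\di t\bigr],\qquad \psi(t):=-\lambda\,\sigma(\bY_t)^{-1}\bigl(Y(t)-X(t)\bigr),
\end{equation*}
for a large parameter $\lambda>0$. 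By $\mathbf{H}_3$, $\psi$ is well-defined and satisfies $|\psi(t)|\le C\lambda|Y(t)-X(t)|$; a Girsanov shift produces a measure $\Q\ll\P$ with density $\mathcal{E}_h$ under which $\bY_h$ has law $P_y^h$, so that under $\P$ the pair $(\bX_h,\bY_h)$ is a generalized coupling of $P_x^h$ and $P_y^h$.

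Writing $Z(t):=Y(t)-X(t)$, It\^o's formula gives
\begin{equation*}
\di|Z(t)|^2 = \Bigl[-2\lambda|Z(t)|^2 + 2\bigl(Z(t),a(\bY_t)-a(\bX_t)\bigr) + \norm{\sigma(\bY_t)-\sigma(\bX_t)}^2\Bigr]\di t + \di M_t ,
\end{equation*}
and on the event $\{\sup_{s\le h}\|\bZ_s\|\le 1\}$ the drift is bounded using $\mathbf{H}_1$ and $\mathbf{H}_2$ by $-2\lambda|Z(t)|^2 + C\|\bZ_t\|^{\alpha+1}+C\|\bZ_t\|^{2\beta}$; since $\alpha>0$ and $2\beta>1$, the two H\"older terms are of strictly smaller order than $|Z(t)|^2$ in the small-segment regime. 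A Gronwall-type argument tailored to the delayed structure (treating $[0,r]$ as an initial growth phase and $[r,h]$ as the genuine contraction phase) should then yield, for any $\gamma<\alpha\wedge(2\beta-1)$, moment bounds
\begin{equation*}
\E\bigl[\|\bZ_h\|^{2\gamma}\wedge 1\bigr] + \E\int_0^h|\psi(s)|^2\,\di s \le c(\lambda,h)\,\|x-y\|^{2\gamma} ,
\end{equation*}
with $c(\lambda,h)$ as small as we wish by taking $\lambda$ large; the restriction $h>r$ enters precisely to guarantee that $\bZ_h$ lives entirely in the controlled window $[h-r,h]\subset(0,h]$.

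For every $f$ with $\|f\|_{\mathrm{Lip}_{d_{N,\gamma}}}=1$ (by centering, $|f|\le 1$), \eqref{KR_thm} combined with the above coupling yields
\begin{equation*}
\left|\int f\,\di P_x^h - \int f\,\di P_y^h\right| \le \E\bigl[d_{N,\gamma}(\bX_h,\bY_h)\bigr] + \E|1-\mathcal{E}_h|,
\end{equation*}
with the first summand bounded by $N\bigl(\E[\|\bZ_h\|^{2\gamma}\wedge 1]\bigr)^{1/2}$ and the second by a standard Novikov-type estimate in terms of $\E\int_0^h|\psi|^2\,\di s$; by the previous step both are of order $N\sqrt{c(\lambda,h)}\,\|x-y\|^\gamma$. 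Non-expansion follows because when $d_{N,\gamma}(x,y)=1$ it is automatic (the metric is bounded by $1$), and otherwise one takes $\lambda$, then $N\ge N_{h,\gamma}$, so that the prefactor is $\le 1$. The contraction on $K\times K$ (Part~I) and on $H_c\times H_c$ (Part~II) is obtained by refining the same estimate to produce a prefactor $\theta<1$ uniformly over the set in question; the uniformity exploits boundedness of $K$ (Part~I), respectively the strengthened growth bounds \eqref{gba1}-\eqref{gbsigma1} together with the defining constraint on $|x(0)|$ in $H_c$ (Part~II), to keep the a priori moment estimates that enter $c(\lambda,h)$ uniform in the initial data.

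The main obstacle I foresee is the closure of the localization to $\{\sup_{s\le h}\|\bZ_s\|\le 1\}$: outside this event the H\"older bounds $\mathbf{H}_1$-$\mathbf{H}_2$ carry no information, so one must show that the exit probability decays fast enough to be absorbed into the contraction constant, which is exactly where boundedness of $K$ (Part~I) or the combination of \eqref{gba1}-\eqref{gbsigma1} with the $H_c$-constraint (Part~II) is used, via a Lyapunov/moment estimate on $\|\bX_t\|+\|\bY_t\|$ uniform over the initial set. A secondary difficulty is that the drift in $\di|Z|^2$ is controlled only in terms of the full segment $\|\bZ_t\|$ rather than the pointwise value $|Z(t)|$, so a standard pointwise Gronwall inequality does not close; the decay must instead be propagated through a segment-level Lyapunov functional or the $[0,r]$-to-$[r,h]$ bootstrap already sketched, and this is ultimately what forces the constraints $\gamma<\alpha\wedge(2\beta-1)$ and $h>r$.
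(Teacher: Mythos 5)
There are two genuine gaps. First, the fixed-$\lambda$ control does not close under mere H\"older continuity. In the regime $\|\bZ_t\|\approx\upsilon:=\|x-y\|\to 0$ with $\alpha<1$ (resp.\ $\beta<1$), the perturbation $C\|\bZ_t\|^{\alpha+1}=C\upsilon^{\alpha+1}$ is of \emph{larger} order than $|Z(t)|^2\le\upsilon^2$, not smaller as you assert: $\upsilon^{\alpha+1}/\upsilon^2=\upsilon^{\alpha-1}\to\infty$. Consequently any Gronwall-type bound of the form $|Z(h)|^2\lesssim e^{-c\lambda h}\upsilon^2+\lambda^{-1}(\upsilon^{\alpha+1}+\upsilon^{2\beta})$ cannot produce a prefactor $c(\lambda,h)$ that is small uniformly in $\upsilon$ for any fixed $\lambda$, so your key moment estimate fails exactly in the non-Lipschitz case the theorem is about. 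The paper's resolution is to let the damping rate depend on the initial distance, $\lambda=\upsilon^{\gamma-1}$, and to switch the control off at $\tau=\inf\{s:|X(s)-Y(s)|\ge 2\upsilon\}$: then the error term $A\lambda^{-1}=C(\upsilon^{\alpha+2-\gamma}+\upsilon^{2\beta+1-\gamma})$ is $o(\upsilon^2)$ precisely because $\gamma<\alpha\wedge(2\beta-1)$, while the control stays bounded by $2\upsilon^{\gamma}$ so the Girsanov/total-variation cost is $O(\upsilon^{\gamma})$. The estimate is run pathwise via the exponential tail bound of Lemma \ref{lem1} (which also shows $\tau$ is not reached), rather than via an $L_2$ Gronwall argument; and since the controlled equation need not be well-posed for non-Lipschitz coefficients, the whole construction is first carried out for Lipschitz approximations $a^{\eps},\sigma^{\eps}$ and then transferred by the weak convergence $P^{h,\eps}_x\to P^h_x$ of Section \ref{s42}.

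Second, your proposal is missing the entire far-from-diagonal half of the contraction statement. The generalized coupling is intrinsically a near-diagonal device: it requires $\|x-y\|\le\upsilon_0$ for $\mathbf{H}_1$--$\mathbf{H}_2$ to apply and for the total-variation bound $C\|x-y\|^{\gamma}$ to be informative, and in the paper the resulting contraction on $D_{N,\gamma}=\{d_{N,\gamma}<1\}$ is uniform over all of $\CC$, making no use of $K$ or $H_c$. For pairs $(x,y)\in K\times K$ with $d_{N,\gamma}(x,y)=1$ one needs a separate ingredient, namely the uniform support-type lower bound \eqref{support}, $\inf_{x\in B}\P_x(\|\bX_h-z\|\le\delta)>0$, proved with a different control that steers the solution toward a fixed target path; combined with a classical coupling this forces $d_{N,\gamma}(P^h_x,P^h_y)\le\theta<1$ for distant pairs. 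This is the \emph{only} place where boundedness of $K$ (Part I), or the strengthened growth bounds \eqref{gba1}--\eqref{gbsigma1} together with the constraint $|x(0)|\le c$ (Part II), enter; your attribution of their role to controlling the exit probability from $\{\sup_{s\le h}\|\bZ_s\|\le 1\}$ misplaces where the hypotheses are actually needed, and "refining the same estimate" cannot supply this missing irreducibility argument.
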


  Once a proper non-expanding metric $d$ is constructed, the general theory can be applied which allows one to obtain (weak) ergodic rates,  taking into account recurrence properties of the process and measuring how quickly the system visits a $d$-small set; e.g. \cite[Section 4.5]{K17}. Namely, we have the following statement. Denote
\be\label{dg}
  d_\gamma(x,y)= d_{1,\gamma}(x,y)=\|x-y\|^\gamma\wedge 1,
  \ee
  and for a given measurable function $\phi:\Re^+\to \Re^+$ define the functions
  $$
  \Phi(v)=\int_1^v{dw\over \phi(w)}, \quad r(t)=\phi(\Phi^{-1}(t)).
  $$

  \begin{thm}\label{t23}  Assume $\mathbf{H}_1$ -- $\mathbf{H}_4$. Assume also that, for some $h>r$, the following \emph{Lyapunov-type condition} holds:
  \be\label{lyap}
\E_x V(\bX_h)-V(x)\leq -\phi\Big(V(x)\Big)+C_V, \quad x\in \CC.
\ee
Here  $V:\CC\to [1, +\infty)$ is a measurable \emph{Lyapunov function}, $C_V$ is a constant, and the  function $\phi:\Re^+\to \Re^+$ with $\phi(\infty)=\infty$ is concave and strictly increasing. Assume that either
  \item \be\label{level_sets} V(x)\to \infty,\quad \|x\|\to \infty,
\ee
or
 \be\label{level_sets1} V(x)\to \infty,\quad |x(0)|\to \infty
\ee
and in addition \eqref{gba1}, \eqref{gbsigma1} hold true.

Then there exists a unique IPM $\pi$ for the segment process $\bX$, and for any $
 \gamma\in (0,1],  \delta\in(0,1)$
there exist $c, C>0$ such that
\be\label{rate_ind}
d_\gamma(P_x^t, \pi)\leq {C \over r(c t)^\delta}\phi(V(x))^{\delta}, \quad x\in \CC.
\ee

  \end{thm}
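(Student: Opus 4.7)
The strategy is to apply, to the skeleton chain $\bX^h=\{\bX_{nh}\}_{n\ge 0}$, the abstract subgeometric ergodicity machinery of \cite[Section~4.5]{K17}, which combines a drift condition like \eqref{lyap} with a $d$-small set for $P^h$ to produce both a unique IPM and an explicit rate, and then to pass from the skeleton chain to arbitrary $t\ge 0$.

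Fix $\gamma\in(0,1]$ and pick $\tilde\gamma\in\bigl(0,\gamma\wedge\alpha\wedge(2\beta-1)\bigr)$. Since $u^\gamma\le u^{\tilde\gamma}$ on $[0,1]$ and both $d_\gamma$ and $d_{N,\tilde\gamma}$ are capped at $1$, we obtain the pointwise bound $d_\gamma(x,y)\le d_{N,\tilde\gamma}(x,y)$ for every $N\ge 1$, which by the definition of the coupling distance lifts to $d_\gamma(\mu,\nu)\le d_{N,\tilde\gamma}(\mu,\nu)$. Hence it suffices to establish \eqref{rate_ind} with $d_{N,\tilde\gamma}$ in place of $d_\gamma$. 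Next observe that under either \eqref{level_sets} or \eqref{level_sets1}--\eqref{gba1}--\eqref{gbsigma1}, every sub-level set $\{V\le R\}$ is, respectively, bounded in $\CC$ or contained in some $H_c$; so by the appropriate part of Theorem \ref{t2} we can fix $N$ large enough that all sub-level sets of $V$ are $d_{N,\tilde\gamma}$-small for $\bX^h$.

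With \eqref{lyap} as subgeometric drift and $d_{N,\tilde\gamma}$-small sub-level sets providing the minorisation structure, the abstract ergodicity theorem applied to $\bX^h$ yields a unique IPM $\pi$ (existence via Krylov--Bogoliubov from \eqref{lyap}; uniqueness from the contraction below) and a rate
\be\label{skel_rate_plan}
d_{N,\tilde\gamma}(P_x^{nh},\pi)\le \frac{C_0}{r(c_0 nh)^{\delta}}\,\phi(V(x))^{\delta},\qquad n\ge 1,
\ee
for every $\delta\in(0,1)$. The exponent $\delta<1$ is the standard price of interpolation in the subgeometric setting: the raw Foster--Lyapunov--Harris output is of the additive form $C_0/r(c_0nh)+\phi(V(x))/r(c_0nh)$, and packing it into a product of a decay and a growth factor costs a power.

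To pass from $nh$ to arbitrary $t\ge 0$, write $t=nh+s$ with $s\in[0,h)$; the Markov identity $P_x^t(\cdot)=\int P_y^{nh}(\cdot)\,P_x^s(\di y)$, the convexity of the Kantorovich distance, and \eqref{skel_rate_plan} give
$$
d_{N,\tilde\gamma}(P_x^t,\pi)\le \frac{C_0}{r(c_0 nh)^{\delta}}\int\phi(V(y))^{\delta}\,P_x^s(\di y).
$$
A routine consequence of \eqref{lyap} is a uniform bound $\E_x\phi(V(\bX_s))^{\delta}\le C(\phi(V(x))^{\delta}+1)$ for $s\in[0,h]$; combined with $nh\ge t-h$ and monotonicity of $r$ this produces \eqref{rate_ind}, first for $d_{N,\tilde\gamma}$ and hence for $d_\gamma$, after adjusting $c,C$. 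The principal obstacle is largely bookkeeping: one must verify that the in-house notion of $d$-small set (global non-expansion plus contraction on $K\times K$) plugs verbatim into the abstract framework of \cite[Section~4.5]{K17}, and that the subgeometric machinery delivers the full range $\delta\in(0,1)$ rather than only the natural bound at $\delta=1$, an interpolation step that is routine but technically delicate.
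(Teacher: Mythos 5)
Your overall strategy is the same as the paper's: reduce $d_\gamma$ to a metric $d_{N,\tilde\gamma}$ with $\tilde\gamma<\alpha\wedge(2\beta-1)$, use Theorem \ref{t2} to produce a $d_{N,\tilde\gamma}$-small sub-level set of $V$, feed this and \eqref{lyap} into the subgeometric Harris-type theorem of \cite[Section 4.5]{K17} applied to the skeleton chain, and then interpolate to continuous time. Up to the skeleton rate \eqref{skel_rate_plan} this matches the paper (which works with the single level set $K_{V,\ell}=\{V\le\ell\}$, $\phi(1+\ell)>2C_V$, and obtains the factor $\phi(V(x))^\delta$ by re-weighting the recurrence functions to $\wh W=(\phi(V(x))+\phi(V(y)))/\phi(1)$ and applying a H\"older/Fatou interpolation; your ``packing into a product costs a power'' remark is a fair summary of that step).

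The genuine gap is in the passage from $t\in h\NN$ to general $t$. You decompose $P_x^{nh+s}=\int P_y^{nh}\,P_x^s(\di y)$, i.e.\ you run the fractional time $s\in[0,h)$ \emph{first}, and you then need $\E_x\phi(V(\bX_s))^\delta\le C\bigl(\phi(V(x))^\delta+1\bigr)$ uniformly over $s\in[0,h]$. This is not a ``routine consequence'' of \eqref{lyap}: the Lyapunov condition is assumed only at the single time step $h$, $V$ is merely a measurable function, and nothing in the hypotheses controls $\E_x V(\bX_s)$ at intermediate times $0<s<h$ --- iterating \eqref{lyap} propagates bounds along the lattice $s+h\ZZ_+$ but provides no starting estimate for $\E_x V(\bX_s)$ in terms of $V(x)$ when $s\notin h\ZZ_+$, and there is no supermartingale property or continuity of $V$ to fall back on. The paper composes in the opposite order, $P_x^{nh+s}=\int P_y^{s}\,P_x^{nh}(\di y)$, and uses instead the uniform Lipschitz bound \eqref{lip}, $d_{N,\gamma}(P_x^t,P_y^t)\le C_h\, d_{N,\gamma}(x,y)$ for $t\in[0,h]$, which comes from the generalized-coupling construction of Section \ref{s51}, not from the Lyapunov function. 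Replacing your intermediate-time moment bound by this Lipschitz estimate repairs the argument; as written, that step fails.
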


  \begin{rem} The bound \eqref{rate_ind} can be alternatively considered as a convergence rate for the semigroup
  $$
  P_tf(x)=\E_x f(\bX_t), \quad x\in \CC, \quad t\geq 0.
  $$
  Namely, denote for $\gamma\in (0,1]$
$$
\|f\|_{H_\gamma}=\sup_{0<\|x-y\|\leq 1}{|f(x)-f(y)|\over \|x-y\|^\gamma}+\sup_{\|x-y\|> 1}|f(x)-f(y)|,
$$
which is   just the Lipschitz constant of $f$ w.r.t. $d_\gamma$; see \eqref{Lip_comt} and \eqref{dg}. Let also $H_\gamma$ denote the class of functions $f:\CC\to \Re$ with $\|f\|_{H_\gamma}<\infty$. Then by
\eqref{KR_thm} inequality \eqref{rate_ind} is equivalent to the following:
\be\label{rate_Hol}
P_tf(x)\leq {C \over r(c t)^\delta}\phi(V(x))^{\delta}\|f\|_{H_\gamma}, \quad x\in \CC, \quad t\geq 0, \quad f\in H_\gamma.
\ee
  \end{rem}

In general, it is a separate non-trivial question how to verify the Lyapunov condition \eqref{lyap} for delay equations. We do not address this question here, referring to \cite{BS} and references therein. Note however, that there are simple models, where this condition can be checked essentially in the same way as in the (non-delayed) diffusion setting. In the following proposition, $\alpha,b,c$ denote some positive constants, whose values can be  specified in terms of the other parameters.

\begin{prop} Let the coefficient $\sigma(\cdot)$ be bounded, and the coefficient  $a(\cdot)$ satisfy
$$
\Big(a(x), x(0)\Big)\leq -A_\kappa |x(0)|^{\kappa+1}, \quad |x(0)|\geq R
$$
for some $\kappa\geq -1$, $A_\kappa<0$, and $R>0$. Assume also $\mathbf{H}_1$ -- $\mathbf{H}_3$. Then
\begin{itemize}
  \item[(i)] If $\kappa\geq 0$, \eqref{lyap} holds true with
  $$
  V(x)=e^{\alpha|x(0)|}, \quad \phi(v)=cv.
  $$
     \item[(ii)] If $\kappa\in (-1, 0)$, \eqref{lyap} holds true with
  $$
  V(x)=e^{\alpha|x(0)|^{\kappa+1}}, \quad \phi(v)=cv\Big(\log v+b\Big)^{2\kappa/(\kappa+1)}.
  $$
  \item[(iii)] If $\kappa=-1$ and in addition $2A_{-1}>\Lambda:=\sup_x\norm{\sigma(x)}^2$, then for
  $p>2$,
  $$
  p<2+(2A_{-1}-\Lambda)(\sup_x\|\sigma(x)\|^2)^{-1}$$ condition \eqref{lyap} holds true with
  $$
  V(x)=|x(0)|^{p}, \quad \phi(v)=av^{1-2/p}.
  $$
\end{itemize}
\end{prop}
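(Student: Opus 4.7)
In each of the three cases the Lyapunov function has the form $V(x)=U(x(0))$ for a radial function $U:\Re^n\to[1,\infty)$ smooth away from the origin, so my plan is to apply It\^o's formula to $U(X(t))$ (the origin is handled by smoothing $|y|\mapsto(|y|^2+\varepsilon^2)^{1/2}$ and sending $\varepsilon\downarrow 0$), derive the pointwise infinitesimal Lyapunov inequality
\be\label{infLyap}
(LU)(\bX_s,X(s)):=\bigl(a(\bX_s),\nabla U(X(s))\bigr)+\tfrac{1}{2}\,\mathrm{tr}\bigl(\sigma(\bX_s)\sigma(\bX_s)^\top\nabla^2 U(X(s))\bigr)\le -\phi\bigl(U(X(s))\bigr)+C'
\ee
with $\phi$ as prescribed, and then bootstrap to the discrete condition \eqref{lyap}. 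The bound \eqref{infLyap} is a direct radial Hessian computation based on $\nabla U(y)=U_r'(|y|)\hat y$ with $\hat y=y/|y|$, combined with the dissipativity hypothesis $(a(x),x(0))\le -A_\kappa|x(0)|^{\kappa+1}$ on $\{|x(0)|\ge R\}$ and the uniform bound $\Lambda:=\sup_x\norm{\sigma(x)}^2<\infty$; values of $U$ and $\phi(U)$ on $\{|y|\le R\}$ are bounded and thus contribute only to the additive constant $C'$.

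The case-by-case calculation is routine. In \emph{case (iii)}, $U(y)=|y|^p$ gives $(LU)\le\tfrac{p}{2}|X(s)|^{p-2}[(p-1)\Lambda-2A_{-1}]+C'$; the stated inequality $p<2+(2A_{-1}-\Lambda)/\Lambda$ (equivalently $(p-1)\Lambda<2A_{-1}$) makes the bracket negative, and $|y|^{p-2}=U(y)^{1-2/p}$ gives \eqref{infLyap} with $\phi(v)=av^{1-2/p}$. In \emph{case (i)}, $U(y)=e^{\alpha|y|}$ with $\alpha>0$ small makes the drift contribution $-\alpha A_\kappa|y|^\kappa U$ dominate the Hessian bound $\le(\tfrac{\alpha^2\Lambda}{2}+\tfrac{\alpha\Lambda}{|y|})U$ for $\kappa\ge 0$ and $|y|$ large, yielding $\phi(v)=cv$. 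In \emph{case (ii)}, $U(y)=e^{\alpha|y|^{\kappa+1}}$ produces a drift $-\alpha(\kappa+1)A_\kappa|y|^{2\kappa}U$ dominating the Hessian $\le\tfrac{1}{2}\alpha^2(\kappa+1)^2\Lambda|y|^{2\kappa}U$ plus lower-order terms (again for $\alpha$ small); rewriting $|y|^{\kappa+1}=\alpha^{-1}\log U$ so that $|y|^{2\kappa}=(\alpha^{-1}\log U)^{2\kappa/(\kappa+1)}$ gives $\phi(v)=cv(\log v+b)^{2\kappa/(\kappa+1)}$ for $b$ large enough to keep the log-power well-defined on $\{U\ge 1\}$.

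To upgrade \eqref{infLyap} to \eqref{lyap} I would integrate on $[0,h]$ and take expectations, handling the It\^o-martingale term by standard localisation (routine under boundedness of $\sigma$). In \emph{case (i)} the linear $\phi$ reduces this to the ODE inequality $u'(t)\le -cu(t)+C'$ for $u(t):=\E_x V(\bX_t)$, and Gr\"onwall gives $u(h)\le e^{-ch}u(0)+C_V$, i.e.\ \eqref{lyap}. In \emph{cases (ii) and (iii)} $\phi$ is strictly concave and Jensen's inequality goes the wrong way, so I would instead use a short-time stopping argument with $\tau:=\inf\{s\ge 0:|X(s)-x(0)|\ge 1\}$: boundedness of $\sigma$ gives $\P_x(\tau\ge\eta)\ge p_0>0$ for some $\eta>0$ uniformly in $x$, and on $\{\tau\ge s\}$ a Taylor expansion shows that $\phi(V(X(s)))\ge\tfrac{1}{2}\phi(V(x))$ for $|x(0)|\ge R_0$ (since $|X(s)|^p/|x(0)|^p\to 1$ in case (iii) and $\log V(X(s))-\log V(x)=O(|x(0)|^\kappa)\to 0$ in case (ii)), yielding $\E_x\int_0^h\phi(V(\bX_s))\,\di s\ge ch\phi(V(x))$ outside $\{|x(0)|\le R_0\}$ and hence \eqref{lyap}. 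The hard part will be this bootstrap in case (ii): the log-power structure of $\phi$ forces one to track simultaneously the $|x(0)|^\kappa$-size perturbation of $\log V$ and how it propagates through $(\log v+b)^{2\kappa/(\kappa+1)}$, and the assumption $\kappa<0$ is precisely what keeps these perturbations small enough and matches the exponent in the claimed $\phi$.
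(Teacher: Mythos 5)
First, note that the paper does not actually prove this proposition: it only points to [Bu14, Theorem 3.3] and [K17, Proposition 4.6.1], so your attempt has to be measured against those arguments rather than against a proof in the text. Your generator-level computations are correct and are what those references do at that stage (you also correctly read the evident sign typo: $A_\kappa$ must be positive for the hypothesis to be a dissipativity condition, as the requirement $2A_{-1}>\Lambda$ in case (iii) confirms), and your Gr\"onwall bootstrap in case (i), where $\phi$ is linear and no Jensen-type issue arises, is fine.

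The gap is in the bootstrap for cases (ii) and (iii). The claim that $\P_x(\tau\ge\eta)\ge p_0>0$ uniformly in $x$, with $\tau=\inf\{s:|X(s)-x(0)|\ge1\}$, does not follow from boundedness of $\sigma$: the hypotheses control the drift only one-sidedly, through $(a(x),x(0))$ and through boundedness on bounded subsets of $\CC$ (and $\{x:|x(0)|\le c\}$ is not a bounded subset of $\CC$), so $|a(x)|$ may grow arbitrarily fast in $|x(0)|$. Already for $n=1$, $a(x)=-x(0)-x(0)^3$ and constant $\sigma$, the hypotheses of case (iii) hold with $A_{-1}=1$, $R=1$, yet starting from $|x(0)|=M$ the process leaves the unit ball around $x(0)$ within a time of order $M^{-3}$, so $\P_x(\tau\ge\eta)\to0$ as $M\to\infty$ for every fixed $\eta>0$, and your lower bound $\E_x\int_0^h\phi(V(\bX_s))\,\di s\ge ch\,\phi(V(x))$ is lost. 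The conclusion of the proposition is still true in such examples, but for the opposite reason --- the strong inward drift makes $\E_xV(\bX_h)$ bounded while $V(x)-\phi(V(x))$ is huge --- which shows that a correct bootstrap must combine two regimes: paths along which $|X|$ stays comparable to $|x(0)|$ (where your Taylor estimate of $\phi(V(X(s)))$ applies), and paths along which $|X|$ drops quickly (where one must instead use the supermartingale property of $V(\bX_t)-Ct$ after the first exit time to show that $V(\bX_h)$ is then itself far smaller than $V(x)$). Making these two regimes fit together for a concave $\phi$, for which Jensen indeed goes the wrong way, is precisely the nontrivial content of the cited [Bu14, Theorem 3.3] and [K17, Proposition 4.6.1], and it is the part your proposal does not supply. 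A secondary point in the same spirit: your additive constant $C'$ in the infinitesimal inequality is claimed to absorb the behaviour on $\{|y|\le R\}$, but there it must absorb $(a(x),\nabla U(x(0)))=U'(|x(0)|)|x(0)|^{-1}(a(x),x(0))$, and neither the dissipativity hypothesis nor $\mathbf{H}_1$--$\mathbf{H}_3$ bounds $(a(x),x(0))$ from above when $|x(0)|\le R$ but $\|x\|$ is large; handling this again requires the one-sided structure (an upward push only drives the process into the dissipative region) rather than a crude bound.
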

The proof is analogous to the one of \cite[Theorem 3.3]{Bu14}; see also \cite [Proposition 4.6.1]{K17}.

Finally, let us mention that, without assuming a Lyapunov function to exist, we still have the following stabilization property: if
$\mathbf{H}_1$ -- $\mathbf{H}_4$ hold and there exists \emph{some} IPM $\pi$ for the segment process $\bX$, then this IPM is \emph{unique} and
$P_x^t\to \pi$ weakly as $t\to \infty$ for every $x$. This follows from \cite[Theorem 2.4]{HMS11} and a slightly re-arranged argument from the proof of  Theorem \ref{t2}; see Remark \ref{r52} below. Note that the support statement \eqref{support} proved below provides the property $\mathrm{supp}\,\pi=\CC$ required for \cite[Theorem 2.4]{HMS11} to apply. Alternatively, one can refer to the approach developed in \cite{KPS10} and \cite{BKS}, based on the notion of \emph{e-processes}; it is easy to see that  Remark \ref{r52} yields the e-process property for $\bX_t, t\geq 0$.

\subsection{Sensitivities w.r.t. the initial condition: integral representation and stabilization}
Denote by $C^k(\CC)$ the class of $k$ times Fr\'echet differentiable functions $f:\CC\to \Re$ with continuous  derivatives.
The $k$-th  derivative and direction-wise derivatives for $f\in C^k(\CC)$ will be denoted by $\nabla^k f$ and
$$
\nabla_{z_1, \dots, z_k}^k f=\la\nabla\dots\la \nabla f, z_1\ra, \dots z_k\ra, \quad z_1,\dots, z_k\in \CC,
$$
respectively. Similarly the classes $C^k(\CC, \Re^n)$ and $C^k(\CC, \Re^{n\times m})$ of the functions valued in  $\Re^n$ and $\Re^{n\times m}$ are defined, and the notation for the derivatives is the same. By $C^k_b(\CC),$ we denote the class of  $C^k(\CC)$ functions, bounded with their derivatives up to order $k$.

For a fixed $k\geq 1$, assume the following.

 $\mathbf{C}^{(k)}$. $a\in C^k(\CC, \Re^n)$, $\sigma \in C^k(\CC, \Re^{n\times m})$, and their derivatives of the  orders $1, \dots, k$ are bounded and uniformly continuous on $\CC$.

We first consider the case $k=1$.  Define for $\lambda\geq 0, z\in \CC$ the process $U^{\lambda,z}$ as the solution to the SDDE
 \be\label{U}
\di U^{\lambda,z}(t)=\la \nabla a(\bX_t), \bU^{\lambda,z}_t\ra \, \di t +\la \nabla \sigma(\bX_t), \bU^{\lambda,z}_t\ra\, \di W(t)-\lambda U^{\lambda,z}(t)\,\di t,\quad t\ge0
\ee
with the initial condition $\bU^{\lambda,z}_0=z.$

\begin{thm}\label{t4} Let $\mathbf{C}^{(1)}$ and $\mathbf{H}_3$ hold true. Then for any $f\in C_b^1(\CC)$ the functions
$P_tf,  t\geq 0
$
belong to $C^1_b(\CC)$. For any $\lambda\geq 0, z\in \CC$ the following representation formula holds:
\be\label{der_rep}
\nabla_z \E_x f(\bX_t)=\E_x \Big\la \nabla f(\bX_t), \bU^{\lambda,z}_t\Big\ra+\lambda \E_x \left(f(\bX_t)\int_0^t\sigma(\bX_s)^{-1}U^{\lambda,z}(s)\,dW(s)\right).
\ee
\end{thm}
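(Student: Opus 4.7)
The plan is to derive \eqref{der_rep} by a \emph{Control-and-Reimburse} argument. I will introduce a perturbation of the SDDE that, to first order in $\eps$, shifts the trajectory by $\eps\bU^{\lambda,z}_t$, and then use Girsanov's theorem to absorb the perturbation drift and recover the law of the \emph{original} SDDE started from the shifted initial segment $x+\eps z$. Differentiating the resulting identity at $\eps=0$ will produce the two terms on the right-hand side of \eqref{der_rep}: the bracket with $\nabla f$ from differentiating $\bX^\eps_t$ in $\eps$, and the It\^o-integral term from differentiating the Girsanov density.

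Concretely, set $U=U^{\lambda,z}$. Under $\mathbf{C}^{(1)}$ the linear delay equation \eqref{U} has a unique strong solution with $\E\sup_{s\leq t}|U(s)|^p<\infty$ for every $p\geq 1$. On the probability space carrying $W$ and $\bX$, define $\bX^\eps$ as the strong solution of
\[
\di X^\eps(t) = a(\bX^\eps_t)\,\di t + \sigma(\bX^\eps_t)\,\di W(t) - \eps\lambda U(t)\,\di t, \qquad \bX^\eps_0 = x+\eps z.
\]
Rewriting the extra drift as $\sigma(\bX^\eps_t)[-\eps\lambda\sigma(\bX^\eps_t)^{-1}U(t)]\,\di t$, assumption $\mathbf{H}_3$ together with the moment bounds on $U$ yield the Novikov condition for all small $\eps$, and the density
\[
\rho_\eps = \exp\Bigl(\eps\lambda\int_0^t\sigma(\bX^\eps_s)^{-1}U(s)\,\di W(s) - \tfrac12\eps^2\lambda^2\int_0^t|\sigma(\bX^\eps_s)^{-1}U(s)|^2\,\di s\Bigr)
\]
defines an equivalent probability under which $\bX^\eps$ solves the unperturbed SDDE from $x+\eps z$. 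Thus $P_tf(x+\eps z)=\E[\rho_\eps f(\bX^\eps_t)]$. Differentiating at $\eps=0$ and using $\rho_0=1$, $\bX^0=\bX$,
\[
\nabla_z P_tf(x) = \E\bigl[f(\bX_t)\,\partial_\eps\rho_\eps|_{\eps=0}\bigr] + \E\la\nabla f(\bX_t),\partial_\eps\bX^\eps_t|_{\eps=0}\ra.
\]
A direct computation gives $\partial_\eps\rho_\eps|_{\eps=0}=\lambda\int_0^t\sigma(\bX_s)^{-1}U(s)\,\di W(s)$, and $\bY:=\partial_\eps\bX^\eps|_{\eps=0}$ satisfies precisely \eqref{U} with $\bY_0=z$; by uniqueness of the linear SDDE, $\bY=\bU^{\lambda,z}$, which produces \eqref{der_rep}. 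Specializing to $\lambda=0$ yields $\nabla_z P_tf(x)=\E\la\nabla f(\bX_t),\bU^{0,z}_t\ra$, whose linear growth in $\|z\|$ together with continuous dependence of $\bX$ and $\bU^{0,z}$ on $x$ (inherited from uniform continuity of $\nabla a,\nabla\sigma$ under $\mathbf{C}^{(1)}$) will give $P_tf\in C^1_b(\CC)$.

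The main obstacle will be rigorously justifying the differentiation under the expectation. I will need $L^p$-convergence of $\eps^{-1}(\bX^\eps_t-\bX_t)\to\bU^{\lambda,z}_t$ and of $\eps^{-1}(\rho_\eps-1)\to\lambda\int_0^t\sigma(\bX_s)^{-1}U(s)\,\di W(s)$, together with uniform-in-$\eps$ bounds on moments of $\bX^\eps$ and exponential moments of $\rho_\eps$ allowing dominated convergence to be applied to $\rho_\eps f(\bX^\eps_t)$. The SDDE estimates should close by iterating Gronwall on successive segments $[kr,(k+1)r]$, using boundedness of $\nabla a$ and $\nabla\sigma$ from $\mathbf{C}^{(1)}$; the required exponential moments of $\rho_\eps$ in a neighborhood of $\eps=0$ will follow from $\norm{\sigma^{-1}}\leq C$ and the polynomial moments of $U$.
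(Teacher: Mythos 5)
Your algebraic skeleton is sound and, pleasingly, your perturbation is \emph{not} the one the paper uses: you fix $U=U^{\lambda,z}$ in advance and add the open-loop drift $-\eps\lambda U(t)\,\di t$ while shifting the initial segment to $x+\eps z$, whereas the paper perturbs by the \emph{feedback} control $-\lambda\phi(Y^{\lambda,y}(t)-X^x(t))\,\di t$ with the bounded function $\phi(v)=\arctan(|v|)v/|v|$ and differentiates in the initial point $y$ at $y=x$ (using $\phi(0)=0$, $\nabla\phi(0)=I$). Your identification $\partial_\eps\bX^\eps|_{\eps=0}=\bU^{\lambda,z}$ is correct: the difference of the formal derivative and $U^{\lambda,z}$ solves the homogeneous linear SDDE with zero initial segment, hence vanishes. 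The $C^1_b$ statement via the case $\lambda=0$ (where $\rho_\eps\equiv1$) is also fine.

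However, there is a genuine gap in the Girsanov step. You claim that $\mathbf{H}_3$ ``together with the moment bounds on $U$'' yields Novikov's condition, and later that the needed exponential moments of $\rho_\eps$ follow from $\norm{\sigma^{-1}}\leq C$ and polynomial moments of $U$. This is false in general: Novikov requires $\E\exp\big(\tfrac12\eps^2\lambda^2\int_0^t|\sigma(\bX^\eps_s)^{-1}U(s)|^2\,\di s\big)<\infty$, i.e.\ exponential integrability of $\int_0^t|U(s)|^2\,\di s$. Since $U$ solves a linear SDDE with the multiplicative noise term $\la\nabla\sigma(\bX_t),\bU_t\ra\,\di W(t)$, its supremum has all polynomial moments but (already for the scalar model $\di U=U\,\di W$, where $U$ is lognormal) $\E\exp\big(c\int_0^t|U(s)|^2\,\di s\big)=\infty$ for every $c>0$. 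The same obstruction kills your uniform bound on $\E\rho_\eps^p$, $p>1$, which you need both to conclude $\E\rho_\eps=1$ and to justify differentiating $\E[\rho_\eps f(\bX^\eps_t)]$ under the expectation. This is precisely why the paper inserts the bounded truncation $\phi$ into the control: the resulting Girsanov kernel $\beta^{\lambda,y}$ is uniformly bounded, so $\E\,\mathcal E^{\lambda,y}(t)=1$ and all moments \eqref{E_p} come for free, and the truncation disappears upon differentiation at $y=x$ because $\nabla\phi(0)=I$. Your argument would need either this device (replace $-\eps\lambda U(t)$ by a bounded feedback whose derivative at $\eps=0$ is still $-\lambda U^{\lambda,z}(t)$) or a substitute for Novikov (e.g.\ a non-explosion/uniform-integrability criterion for the stochastic exponential, together with a separate justification of the dominated-convergence step) before the representation \eqref{der_rep} is established for $\lambda>0$.
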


Combining the representation formula \eqref{der_rep} and  Theorem \ref{t23}, we get the following stabilization bound for $\nabla P_tf $ as $t\to \infty$.

\begin{thm}\label{t5} Let $\mathbf{C}^{(1)}$ and the assumptions of Theorem \ref{t23} hold true. Then for  any $\gamma\in (0,1]$, $\delta\in(0,1)$, and $Q>0$
there exist  constants $c_Q>0, C>0$ such that for any $f\in C^1_b(\CC)$
\be\label{der_rate_ind}
\|\nabla P_t(x)\|\leq C \left( {\log r(c_Q t)+\phi(V(x))\over r(c_Q t)}\right)^\delta \|f\|_{H_\gamma}+
Ce^{-Q t}\sup_{y\in \CC}\|\nabla f(y)\|, \ \quad t\geq 0.
\ee
\end{thm}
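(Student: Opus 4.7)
The strategy is to apply the representation formula \eqref{der_rep} with $\lambda = \lambda_Q := Q + c_0$, where $c_0>0$ is a constant depending only on $\sup\norm{\nabla a}$ and $\sup\norm{\nabla\sigma}^2$ that emerges from the standard energy estimate $\E_x \|\bU^{\lambda,z}_t\|^2 \le C e^{-2(\lambda - c_0)t}\|z\|^2$; this estimate is obtained from It\^o's formula applied to $|U^{\lambda,z}(t)|^2$, the bounds on $\nabla a, \nabla \sigma$ granted by $\mathbf{C}^{(1)}$, and a Gr\"onwall argument on the segment $L^2$-norm. The representation splits $\nabla_z P_t f(x)$ into a gradient term $T_1 = \E_x\la \nabla f(\bX_t), \bU^{\lambda_Q,z}_t\ra$ and a stochastic-integral term $T_2 = \lambda_Q\E_x[f(\bX_t) M_t]$ with $M_t = \int_0^t \sigma(\bX_s)^{-1} U^{\lambda_Q,z}(s)\,\di W(s)$. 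The energy estimate yields $|T_1|\le C\|\nabla f\|_\infty e^{-Qt}\|z\|$, recovering the second contribution in \eqref{der_rate_ind}; combined with $\mathbf{H}_3$, it also furnishes the auxiliary $L^2$-bounds $\E M_{t_0}^2\le C\|z\|^2/Q$ and $\E(M_t - M_{t_0})^2\le C\|z\|^2 e^{-2Qt_0}/Q$ for any $0<t_0<t$, which will drive the analysis of $T_2$.

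For $T_2$ I would fix $t_0\in(0,t)$ and decompose $M_t = M_{t_0} + (M_t - M_{t_0})$. For the initial piece, the Markov property together with $\E M_{t_0}=0$ gives
\be\label{plan_split}
\E_x[f(\bX_t) M_{t_0}] = \E_x\bigl[(P_{t-t_0}f(\bX_{t_0}) - \pi f)\, M_{t_0}\bigr],
\ee
to which I apply H\"older's inequality together with the pointwise form of the ergodic bound $|P_{t-t_0}f(y) - \pi f|\le C\|f\|_{H_\gamma}\phi(V(y))^\delta/r(c(t-t_0))^\delta$ (read from \eqref{rate_Hol} via Kantorovich duality) and a Lyapunov moment estimate of the type $(\E_x\phi(V(\bX_{t_0}))^{\delta p})^{1/p}\le C(\phi(V(x)) + \phi(Ct_0))^{\delta}$, valid for $p\le 1/\delta$. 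The latter is derived by iterating \eqref{lyap} to get $\E_x V(\bX_s)\le V(x) + C_V s/h$, invoking the concavity of $\phi$ via Jensen, and using the sub-additivity $\phi(a+b)\le \phi(a)+\phi(b)$, valid for concave $\phi$ with $\phi(0)\ge 0$. For the tail piece, the zero-mean property of $M_t - M_{t_0}$ together with $\mathrm{osc}(f)\le \|f\|_{H_\gamma}$ yields $|\E_x f(\bX_t)(M_t - M_{t_0})|\le C\|f\|_{H_\gamma}\, e^{-Qt_0}/\sqrt{Q}$.

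The final step is optimization in $t_0$. Setting $t_0 = \delta \log r(c_Q t)/Q$ equates $e^{-Qt_0}$ to $r(c_Q t)^{-\delta}$; for $t$ sufficiently large this yields $t_0\le t/2$, so that $r(c(t-t_0))\ge r(c_Q t)$ after declaring $c_Q := c/2$. The tail contribution to $T_2$ then becomes $\le C\|f\|_{H_\gamma}/r(c_Q t)^\delta$, and the initial contribution becomes $\le C\|f\|_{H_\gamma}(\phi(V(x)) + \phi(C\log r(c_Q t)))^\delta/r(c_Q t)^\delta$. The elementary bound $\phi(s)\le C(1+s)$ for concave $\phi$ reduces $\phi(C\log r(c_Q t))$ to $O(\log r(c_Q t))$, and combined with the sub-additivity $(a+b)^\delta\le a^\delta + b^\delta$, both pieces collapse into the desired $C(\log r(c_Q t)+\phi(V(x)))^\delta\|f\|_{H_\gamma}/r(c_Q t)^\delta$. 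The short-time regime, where the prescription would force $t_0>t/2$, is covered by the crude bound $\|\nabla P_t f(x)\|\le C\|\nabla f\|_\infty$ obtained from \eqref{der_rep} at $\lambda=0$, absorbable into $Ce^{-Qt}\|\nabla f\|_\infty$ at the price of a $Q$-dependent constant. The main obstacle I anticipate is pinning down the Lyapunov moment estimate across the full range $\delta\in(0,1)$: for $\delta>1/2$, direct Jensen fails for $\phi(V(\bX_{t_0}))^{2\delta}$, and one must either bootstrap higher moments of $V(\bX_t)$ from \eqref{lyap} and $\mathbf{H}_4$, or exploit the H\"older trick with $p<1/\delta$ balanced by BDG-type moment control on $M_{t_0}$ inherited from the energy estimate propagated to higher powers.
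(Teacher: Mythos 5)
Your proposal is correct and follows essentially the same route as the paper: the representation \eqref{der_rep} with $\lambda$ chosen large, exponential decay of $\bU^{\lambda,z}$, splitting the stochastic integral at an intermediate time $t_0$, the Markov property plus the ergodic rate \eqref{rate_ind} and a Jensen/Lyapunov bound for the initial piece, a zero-mean/oscillation bound for the tail piece, and optimization in $t_0$. The obstacle you anticipate for $\delta>1/2$ is resolved exactly by the second option you name: Lemma \ref{lem2} yields $L^p$-decay of $\bU^{\lambda,z}$ for arbitrary $p\ge 1$ once $\lambda$ is large enough, so applying H\"older with exponents $p=(1-\delta)^{-1}$ on the stochastic integral and $q=\delta^{-1}$ on $|P_{t-t_0}f(\bX_{t_0})-\ov f|$ leaves only the \emph{first} moment of $\phi(V(\bX_{t_0}))$, which Jensen and the iterated Lyapunov bound handle.
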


\begin{rem}\label{r22}
Note that the first term in the right hand side of \eqref{der_rate_ind} coincides with the bound \eqref{rate_Hol} up to minor changes (extra logarithmic term and a time change $ct\mapsto c_Qt$), which do not affect the structure of the estimate. The derivative  $\nabla f$ is involved in the second term only, and this term is decaying very rapidly: at exponential rate, and the index $Q$ in this rate can be made arbitrarily large.
\end{rem}

Next, let $k>1$ be arbitrary. For $f\in C^k(\CC)$ and $j=1, \dots. k$, for any $x\in C$ one can naturally treat $\nabla^j f(x)$ as a $j$-linear form on $\CC$. We endow the space of such forms by the usual norm
$$
\|L\|_j=\sup_{\|z_1\|=\dots=\|z_j\|=1}|L(z_1, \dots, z_j)|,
$$
and denote for $f\in C^k_b(\CC)$
$$
\|f\|_{(k)}=\sup_{x\in \CC}\sum_{j=1}^k\|\nabla^j f(x)\|_j;
$$
note that $\|\cdot\|_{(k)}$ is actually a seminorm because the values of $f$ itself are not involved in it.

\begin{thm}\label{t6} Let  $\mathbf{C}^{(k)}$ for some $k>1$ and let the assumptions of Theorem \ref{t23} hold true. Then for  any $\gamma\in (0,1]$, $\delta\in(0,1)$, and $Q>0$
there exist  constants $c_k>0, C>0$ such that for any $f\in C^k_b(\CC)$
\be\label{der_rate_ind_k}
\|\nabla^k P_t(x)\|_k\leq C \left( {\log r(c_k t)+\phi(V(x))\over r(c_k t)}\right)^\delta \|f\|_{H_\gamma}+
Ce^{-Q t}\|f\|_{(k)},  \quad t\geq 0.
\ee
\end{thm}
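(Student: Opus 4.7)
The strategy is to prove Theorem \ref{t6} by induction on $k$, combining at each step a $k$-th order analogue of the representation \eqref{der_rep} (the formula \eqref{int_rep_der_k} announced in the introduction) with the weak ergodic rate of Theorem \ref{t23} through the semigroup split $P_t=P_{t/2}\circ P_{t/2}$. The base case $k=1$ is Theorem \ref{t5}, whose proof already illustrates the mechanism; the induction step requires higher-order variation processes together with an iterated application of the Control-and-Reimburse identity.

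First I would construct the variations. Under $\mathbf{C}^{(k)}$, iteratively differentiating the SDDE \eqref{SDDE} in the Fr\'echet sense and adding the damping $-\lambda U^{(j,\lambda)}(t)\,\di t$ yields, for directions $z_1,\dots,z_k\in\CC$ and $\lambda\geq 0$, a family of linear SDDEs for processes $\bU^{(1,\lambda)},\dots,\bU^{(k,\lambda)}$, with $\bU^{(1,\lambda)}=\bU^{\lambda,z_1}$ given by \eqref{U} and $\bU^{(j,\lambda)}$ driven by derivatives of $a,\sigma$ of orders up to $j$ at $\bX_t$ together with inhomogeneities built from the lower-order variations. A Gronwall argument based on the boundedness of these derivatives gives, for each $p\geq 1$,
\[
\E_x\|\bU^{(j,\lambda)}_t\|^p\leq C_{p,k}\,e^{-p(\lambda-L_k)t}\,\|z_1\|^p\cdots\|z_j\|^p,
\]
with $L_k$ depending only on the suprema of the first $k$ derivatives of $a,\sigma$. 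Next, by induction on $k$, differentiating the $(k-1)$-th representation in $z_k$ and reorganizing via one more application of \eqref{der_rep}, one obtains a representation of the schematic form
\[
\nabla^k_{z_1,\dots,z_k}\E_x f(\bX_t)=\sum_{j=1}^{k}\E_x\bigl\langle\nabla^j f(\bX_t),\Phi^{(k,j,\lambda)}_t\bigr\rangle+\sum_{\ell=1}^{k}\lambda^\ell\,\E_x\bigl(f(\bX_t)\,\Xi^{(k,\ell,\lambda)}_t\bigr),
\]
where each $\Phi^{(k,j,\lambda)}_t$ is a polynomial in the $\bU^{(i,\lambda)}_t$'s (hence decays like $e^{-(\lambda-L_k)t}$ in $L^p$) and each $\Xi^{(k,\ell,\lambda)}_t$ is a polynomial in It\^o integrals of $\sigma(\bX_s)^{-1}U^{(i,\lambda)}(s)$ against $\di W(s)$, with $L^2$-norm bounded uniformly in $t$ and polynomially in $\lambda$ by means of $\mathbf{H}_3$.

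The final step is the semigroup split. Set $g=P_{t/2}f-\pi(f)$; since $g$ and $P_{t/2}f$ differ by a constant they share all derivatives, and by Theorem \ref{t23} one has $|g(y)|\leq Cr(ct/2)^{-\delta}\phi(V(y))^\delta\|f\|_{H_\gamma}$, while the inductive hypothesis (Theorem \ref{t6} applied at orders $1,\dots,k-1$) yields similar pointwise bounds for $\|\nabla^j g(y)\|_j$ with an additional $e^{-Qt/2}\|f\|_{(k)}$ contribution. Writing $P_t f=P_{t/2}g$ and applying the above representation to $\nabla^k P_{t/2}g(x)$, the gradient piece with $j=k$ is bounded by $\|\nabla^k f\|_\infty\cdot\E_x\|\Phi^{(k,k,\lambda)}_{t/2}\|\leq Ce^{-Qt}\|f\|_{(k)}$ once $\lambda$ is chosen sufficiently large relative to $Q$ and $L_k$; each piece with $j<k$ is controlled by Cauchy--Schwarz combined with the inductive bound for $\|\nabla^j P_{t/2}f(\bX_{t/2})\|_j$ and standard Lyapunov moment estimates for $\phi(V(\bX_{t/2}))^{2\delta}$; each reimbursement piece $\lambda^\ell\E_x(g(\bX_{t/2})\Xi^{(k,\ell,\lambda)}_{t/2})$ is handled by Cauchy--Schwarz, using $\|\Xi^{(k,\ell,\lambda)}_{t/2}\|_{L^2}\leq C(\lambda)$ together with the pointwise bound on $g(\bX_{t/2})$, yielding the first summand of \eqref{der_rate_ind_k} (the polynomial dependence on $\lambda$ being absorbed into the $\log r(c_k t)$ correction).

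The main obstacle will be the bookkeeping during the inductive derivation of the representation: after differentiating the $(k-1)$-th formula in the direction $z_k$, one must verify that every newly arising term can be packaged into the schematic form above, in the sense that every occurrence of $\nabla^j f$ carries a weight belonging to the class $\Phi^{(k,j,\lambda)}$ with the right exponential decay, and every occurrence of $f$ comes paired only with a $\Xi^{(k,\ell,\lambda)}$-type stochastic functional. The mechanism enabling this is a single application of \eqref{der_rep} per newly mixed term, which exchanges one ``uncontrolled'' gradient for one $\lambda$-weighted stochastic integral against a variation process; induction on $k$, together with careful indexing of the variations and intermediate integrals, keeps the combinatorics tractable.
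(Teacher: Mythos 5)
Your construction of the higher-order variation processes and their exponential $L_p$-decay for large $\lambda$ matches the paper's Steps 1--2, and a representation mixing gradient terms with ``reimbursement'' terms is indeed the engine of the proof (the paper obtains it in one stroke by differentiating the Girsanov identity \eqref{Girsanov} $k$ times via the Leibniz rule, which avoids the inductive bookkeeping you worry about). However, your final assembly has two genuine gaps.

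First, the fixed semigroup split at $t/2$ does not yield the stated bound. The quantity $\phi(V(x))$ enters \eqref{der_rate_ind_k} through $\E_x\phi(V(\bX_{t_0}))\le \phi(V(x))+Ct_0$ (from the Lyapunov condition \eqref{lyap} and concavity of $\phi$, cf.\ \eqref{ff4}), so splitting at $t_0=t/2$ produces a numerator $(t+\phi(V(x)))^\delta$ rather than $(\log r(c_kt)+\phi(V(x)))^\delta$. In the subgeometric case $\log r(ct)=o(t)$, so your estimate is strictly weaker than the theorem. The $\log r$ term does not come from ``absorbing polynomial $\lambda$-dependence'' ($\lambda$ is a fixed constant once $Q$ is chosen); it comes from optimizing the split point, taking $t_0\sim Q^{-1}\log r(c_Qt)$, which is exactly what the paper does: then $e^{-Qt_0}\le r(c_Qt)^{-1}$ still matches the leading term while the additive $t_0$ stays logarithmic.

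Second, after writing $\nabla^kP_tf=\nabla^kP_{t/2}g$ with $g=P_{t/2}f-\pi(f)$, the gradient pieces of your representation involve $\nabla^jg(\bX_{t/2})=\nabla^jP_{t/2}f(\bX_{t/2})$, not $\nabla^jf$; your claimed bound $\|\nabla^kf\|_\infty\,\E_x\|\Phi^{(k,k,\lambda)}_{t/2}\|$ is not what the split produces, and controlling $\sup_y\|\nabla^kP_{t/2}f(y)\|_k$ is essentially the statement being proved, so the induction does not close as written. The paper sidesteps this by applying the $k$-fold differentiated Girsanov identity over the whole interval $[0,t]$, so that every derivative of $f$ itself appears paired with an exponentially decaying weight (see \eqref{int_rep_der_k} and \eqref{est1}); only the underived term $\E f(\bX_t^x)\mathcal{E}^{\lambda,x,z_1,\dots,z_k}(t)$ needs the ergodic rate, and there the time-splitting is performed on the stochastic exponential, $\mathcal{E}^{\lambda,y}(t)=\mathcal{E}^{\lambda,y}(0;t_0)\mathcal{E}^{\lambda,y}(t_0;t)$, combined with the Markov property and $\E\,\mathcal{E}^{\lambda,x,z_1,\dots,z_k}(t_0)=0$. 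A further small point: Cauchy--Schwarz on $\E_x(g(\bX_{t/2})\Xi_{t/2})$ requires a $2\delta$-moment of $\phi(V)$, which the Lyapunov condition does not supply for $\delta>1/2$; one should instead use H\"older with exponents $(1-\delta)^{-1}$ and $\delta^{-1}$ as in \eqref{ff3}.
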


Note that the structure of the estimate for the higher order derivatives remains exactly the same as for the 1st order one: the first term essentially coincides with \eqref{rate_Hol} and contains the $H_\gamma$-seminorm of $f$, only, while the second term, which contains the $\|f\|_{(k)}$-seminorm, decays exponentially fast. We mention that there exists an  integral representation for the higher order derivatives, analogous
to \eqref{der_rep}, see \eqref{int_rep_der_k}; actually, the proof of Theorem \ref{t6} is based on this representation. However, this representation is now less explicit and more cumbersome, that is why we do not formulate it separately here.

\section{Outline: generalized couplings and the Control-and-Re\-im\-bur\-se strategy}\label{s3} Within the classical \emph{coupling} approach to the study of ergodic properties of Markov systems one has to construct, on a common probability space, a pair of stochastic processes with prescribed law, such that the distance between the components of the pair obeys
certain bounds. For instance,  inequality \eqref{contracting} implies that for any $(x, y)\in B$ there exists a pair of segment processes $\bX, \bY$ with $\mathrm{Law}\, (\bX)=P_x, \mathrm{Law}\, (\bY)=P_y$ such that
\be\label{contracting2}
\E d(\bX_h, \bY_h)\leq \theta d(x,y).
\ee
The key question is how to construct a   pair $(\bX, \bY)$ with the required properties.  One naive way  is to take the coupling which consists of two solutions to equation \eqref{SDDE} with the same noise $W$ and given initial conditions $x,y$. This natural coupling is often not a good choice.
Namely, assume for the moment that the coefficients $a$, $\sigma$ are Lipschitz continuous, then such a pair is well defined, but the \emph{contraction} property \eqref{contracting2} in general has no reason to hold true. Namely, let us consider, for illustration purposes only, the much simpler model of a non-delayed scalar equation:
\be\label{SDE}
\di X(t)=a(X(t))\, \di t +\sigma(X(t))\, \di W(t),\quad t\ge0.
\ee
Then a simple calculation using   It\^o's formula shows that
$$
\E (X(t)-Y(t))^2\leq e^{\kappa t}(X(0)-Y(0))^2, \quad \kappa=\sup_{u\not=v}{2(a(u)-a(v))(u-v)+(\sigma(u)-\sigma(v))^2\over |u-v|^2}
$$
Hence for \eqref{contracting2} to hold true with (say) $d(x,y)=|x-y|$ one needs $\kappa<0$; that is, the system has to be \emph{dissipative}, which is a strong structural limitation. In the non-delayed case the \emph{synchronous } (or \emph{marching}) coupling  construction  outlined above is far from being optimal: there are much better possibilities, which exploit the analytical properties of the corresponding semigroup, provided by parabolic PDE theory. In the delay case such an analytical theory is not available, while the {synchronous coupling} admits a useful modification, which we further explain in detail.

In what follows, let $x,y\in \CC$ be given, and $X$ be the solution to \eqref{SDDE} with $\bX_0=x$. Next, let $Y$ be a solution to the following \emph{controlled} version of \eqref{SDDE}: $\bY_0=y$,
\be\label{SDDE-Y}
\di Y(t)=a(\bY_t)\, \di t +\sigma(\bY_t)\, \di W(t)+\chi(t)\,\di t ,\quad t\ge0,
\ee
where the \emph{control term} $\chi$ is yet to be chosen. For instance, still assuming $a, \sigma$ to be Lipschitz continuous, we can take
\be\label{control_term}
\chi(t)=\lambda\big(X(t)-Y(t)\big).
\ee
Then, for $\lambda$ large enough in comparison to the Lipschitz constants for $a, \sigma$,  there exist $C(\lambda), \kappa(\lambda)>0$ such that
\be\label{HMS}
\E\|\bX_t-\bY_t\|^2\leq C(\lambda)e^{-\kappa(\lambda) t}\|x-y\|^2, \quad t\geq 0,
\ee
see \cite[Lemma~3.6]{HMS11}. This  actually gives the contraction property \eqref{contracting2} for $d(x,y)=\|x-y\|$ and $h$ large enough. In a sense,  by endowing the original equation with the control term \eqref{control_term},  we  improve the original system from non-dissipative to a  dissipative one.

Clearly, the pair $\bX, \bY$ is not a (true) coupling: since equation \eqref{SDDE-Y} contains an extra term  $\chi_t\,\di t$, the law of $\bY$ has no reason to coincide with $P_y$. However, there is still some link between these laws, which is the reason for us to call the pair $\bX, \bY$ a \emph{generalized coupling}. Namely, assume that the non-degeneracy assumption $\mathbf{H}_3$ holds true, then \eqref{SDDE-Y} can be written in the form
\be\label{SDDE-Y-tilde}
\di Y(t)=a(\bY_t)\, \di t +\sigma(\bY_t)\, \di \wt W(t),\quad t\ge0,\quad \bY_0=y
\ee
with
$$
\di \wt W(t)=\di W(t)+\eta(t)\di t, \quad \eta(t)=\sigma(\bY_t)^{-1}\chi(t).
$$
Note that, by \eqref{HMS} and $\mathbf{H}_3$,
$$
\E\int_0^\infty |\eta(t)|^2\, \di t\leq C\|x-y\|^2.
$$
Then  the law of $\wt W$ on $C([0, \infty), \Re^m)$ is absolutely continuous w.r.t. the law of $W$ and moreover the following bound for the total variation distance holds, see Theorem \ref{tKLGirsanov} and \eqref{otzenka2}:
$$
d_{TV}\Big(\mathrm{Law}( W), \mathrm{Law}(\wt W)\Big)\leq {1\over 2}\sqrt{\E\int_0^\infty |\eta(t)|^2\, \di t}.
$$
Since $\bY$ is the strong solution to \eqref{SDDE-Y-tilde}, it can be understood as an image of $\wt W$ under a measurable mapping $\Phi$ such that the image of $W$ under $\Phi$  is just the solution to
$$
\di Y(t)=a(\bY_t)\, \di t +\sigma(\bY_t)\, \di  W(t),\quad t\ge0,\quad \bY_0=y.
$$
Hence
\be\label{TV}
d_{TV}\Big(\mathrm{Law}(\bY), P_y\Big)\leq d_{TV}\Big(\mathrm{Law}( W), \mathrm{Law}(\wt W)\Big)\leq C\|x-y\|,
\ee
which in particular means that the  change of the law of the solution, caused by the additional stochastic control term \eqref{control_term}, becomes smaller when $x,y$ become closer.  This observation enables us to construct a new (true) coupling from the generalized one $\bX,\bY$, which satisfies \eqref{contracting2} with properly chosen $d$ and $D$; see \cite[Theorem 2.4]{BKS18} and Proposition \ref{p_coup} below.

Let us summarize: because the direct construction of a coupling with the required properties may be difficult, we first construct a generalized one. At this stage, using additional control-type  terms, the properties of the system can be improved, e.g.  a contraction-type bound \eqref{HMS} can be provided for a non-dissipative system. Then we in a sense
reimburse the changes to the marginal laws,  generated by the control-type terms, using e.g.~the bound   \eqref{TV} and constructing a true coupling from the generalized one. This is the essence of the two-stage C-n-R strategy mentioned in the introduction.

The C-n-R strategy appears to be quite flexible; now we explain how it can be applied to the study of sensitivities. Under
the condition $\mathbf{C}^{(1)}$ the solution to \eqref{SDDE} is \emph{$L_p$-Fr\'echet differentiable} w.r.t. $x\in \CC$; see Section \ref{s61} for the corresponding definition and proofs. The respective derivative in the direction $z\in \CC$ equals just $U^{0, z}$, which clearly yields \eqref{der_rep} with $\lambda=0$. However, in order for the latter identity to provide the stabilization of the sensitivity as $t\to \infty$, it is required that $\bU_t^{0,z}\to 0, t\to \infty$. This can be guaranteed under an additional \emph{confluence} assumption, which is an analogue of the dissipativity assumption for the gradient process, see \cite{PP14}
 for a systematic treatment of  confluent SDEs. Using generalized couplings, we  avoid using this strong  additional assuption. Namely, together with the \emph{true} derivative $\bU^{0, z}_t$ in the direction $z$, we construct a family of \emph{controlled} derivatives $\bU^{\lambda, z}_t$, which are the limits of
 $$
 {\bY^{\lambda, x+\eps z}_t-\bX^{x}_t\over \eps}
$$
where $\bY^{\lambda, x+\eps z}$ is defined by \eqref{SDDE-Y} with (slightly changed) control term \eqref{control_term} and the initial value $x+\eps z$.  We have for $\lambda>0$ large enough $\bU_t^{\lambda,z}\to 0, t\to \infty$ exponentially fast; that is, using the control-type argument we actually transform a non-confluent system to a (sort of) confluent one. The ``reimbursement'' for such a control is represented by the  additional term in \eqref{der_rep}, which appears due to the Girsanov formula.

In all the previous considerations, it was assumed that the coefficients $a, \sigma$ are Lipschitz continuous. This limitation is not ultimate, and in some cases  can be  substantially relaxed; we postpone the detailed discussion of this subtle point to Section \ref{s51} below.  Here we just outline the main idea we use in the  the proof of  weak uniqueness for \eqref{SDDE}. Let us take
\emph{some} weak solution to \eqref{SDDE}, and consider a family of approximate SDDEs of the form
\be\label{SDDE-Y-eps}
\di Y^\eps(t)=a^\eps(\bY_t^\eps)\, \di t +\sigma^\eps(\bY_t^\eps)\, \di W(t)+\chi^\eps(t)\di t ,\quad t\ge0
\ee
with the same $W$,   the same initial value, and Lipschitz continuous $a^\eps, \sigma^\eps$ approximating $a,\sigma$ in a suitable sense. The control terms $\chi^\eps$ will be chosen in a way to guarantee that \eqref{SDDE-Y-eps} has unique \emph{strong} solutions, for any $t\geq 0$
\be\label{w1-conv}
\bY_t^\eps\to \bX_t, \quad \eps\to 0
\ee
in probability, and  for any $T$
\be\label{w2-RN}
\E \int_0^T|\chi^\eps(t)|^2\, \di t\to 0, \quad \eps\to 0.
\ee
If $\sigma$ is non-degenerate, the latter relation yields that the law of $Y^\eps$ on each finite time interval is asymptotically close to the law of the \emph{strong} solution to
\be\label{SDDE-X-eps}
\di X^\eps(t)=a^\eps(\bX_t^\eps)\, \di t +\sigma^\eps( \bX_t^\eps)\, \di W(t),\quad t\ge0.
\ee
Hence the laws of $\bX^\eps$ are uniquely defined, and these  laws weakly converge to $\bX$; this yields weak uniqueness for \eqref{SDDE}.
The above argument actually exploits  the same  C-n-R strategy: thanks to the control term the lack of Lipschitz continuity of the  coefficients is compensated, and at the reimbursement stage we change the (controlled) process  $\bY^\eps$ to the (non-controlled) process $ \bX^\eps.$

\section{Proof of Theorem \ref{t1}}\label{s4}

\subsection{Existence of a weak solution}\label{s41} Existence of a weak solution can be established in a quite standard way, based on a compactness argument. Both for this purpose and for the subsequent proof of weak uniqueness, we  fix families $\{a^\eps\}, \{\sigma^\eps\}$   such that
\begin{itemize}
  \item[(i)] $a^\eps\to a, \sigma^\eps\to \sigma, \eps\to 0$ uniformly on each compact subset of $\CC$;
  \item[(ii)] conditions $\mathbf{H}_1 - \mathbf{H}_4$ hold true for $a^\eps, \sigma^\eps$ uniformly in $\eps$; that is, with constants which do not depend on $\eps$.
  \item[(iii)] the functions $a^\eps, \sigma^\eps$ are Lipschitz continuous on each bounded subset of $\CC$.
\end{itemize}

Note that   such a family is easy to construct. Namely, one can consider a family $P^\eps$ of finite-dimensional projectors in $\CC$ which strongly converge to the identity and such that $P^\eps x(0)=x(0)$, $x\in \CC$. Then $\wt a^\eps(x)=a(P^\eps x), \wt \sigma^\eps(x)=\sigma(P^\eps x)$ satisfy (i), (ii), and now $\wt a^\eps, \wt \sigma^\eps$ are essentially finite-dimensional. Taking convolutions with finite-dimensional approximate $\delta$-functions one obtains the required families $\{a^\eps\}, \{\sigma^\eps\}$.

By property (iii) and \eqref{gba}, \eqref{gbsigma}, equation \eqref{SDDE-X-eps} with the initial condition $\bX^\eps_0=x$ has a unique strong solution. By  It\^o's formula and \eqref{gba}, \eqref{gbsigma}, for any $p\geq 2$ there exists some  $C_p$ such that
$$
\di|X^\eps(t)|^p=\xi^{\eps, p}(t)\, \di t+\eta^{\eps,p}(t)\, \di W(t)
$$
with
$$
\xi^{\eps, p}(t)\leq C_p(1+\|\bX^\eps_t\|^p), \quad |\eta^{\eps,p}(t)|\leq C_p(1+\|\bX^\eps_t\|^p).
$$
We have
$$
|X^\eps(t)|^p\leq |x(0)|^p+ \int_0^t\big(\xi^{\eps, p}(s)\big)_+\, \di s+\int_0^t\eta^{\eps,p}(s)\, \di W(s),
$$
and thus
$$
\sup_{\tau\in [0, t]}|X^\eps(\tau)|^p\leq |x(0)|^p+ \int_0^t\big(\xi^{\eps, p}(s)\big)_+\, \di s+\sup_{\tau\in [0, t]}\left|\int_0^\tau\eta^{\eps,p}(s)\, \di W(s)\right|.
$$
Then by Cauchy's inequality and Doob's inequality,
$$
\E \sup_{\tau\in [0, t]}|X^\eps(t)|^{2p} \leq 3|x(0)|^{2p}+3C_p^2\E\left(t+\int_0^t \|\bX^\eps_s\|^p\, \di s\right)^2+ 12C_p^2\E\left(t+\int_0^t \|\bX^\eps_s\|^{2p}\, \di s\right).
$$
Note that
$$
\|\bX^\eps_s\|\leq \sup_{\tau\in [0, t]}|X^\eps(t)|+\|x\|, \quad s\leq t.
$$
Hence, by the Gronwall inequality, we get the bound
\be\label{p-uni}
\sup_{t\leq T, \eps>0}\E \sup_{\tau\in [0, t]}|X^\eps(t)|^{2p}<\infty, \quad T>0 ,\quad p\geq 2.
\ee
Denote
$$
\tau^\eps_R=\inf\{t: |X^\eps(t)|\geq R\},
$$
then it follows from \eqref{p-uni} that for any $T$
$$
\sup_\eps \P(\tau^\eps_R<T)\to 0, \quad R\to \infty.
$$
Recall the  coefficients $a^\eps, \sigma^\eps$ are bounded (uniformly in $\eps$) on each bounded subset in $\CC$. Then it is a standard routine to show that, for any $ \nu<1/2, q>0,$ and $T$ there exists $Q$ such that
$$
\sup_\eps\P(X^\eps|_{[0,T]}\not\in B^Q_\nu(0,T))\leq q,
$$
where
$$
B^Q_\nu(0,T)=\Big\{z: |z(t)|\leq Q,  |z(t)-z(s)|\leq Q|t-s|^\nu, \quad s,t\in [0,T]\Big\}
$$
is a ball in the space of $\nu$-H\"older continuous functions on $[0,T]$. This yields that the family of laws of $X^\eps, \eps>0$ in $C(\ax, \Re^n)$  are weakly compact. Since $a^\eps\to a, \sigma^\eps\to \sigma$ uniformly on compacts in $\CC$ and $a, \sigma$ are continuous,
any weak limit point for $X^\eps, \eps\to 0$ is a weak solution to \eqref{SDDE}; this argument is again quite standard, and thus we omit the details. This completes the proof of statement 1 of Theorem \ref{t1}.

\subsection{Weak uniqueness}\label{s42} As explained in Section \ref{s3}, we will specify the law of an arbitrary weak solution $X$ to \eqref{SDDE} with $\bX_0=x^0\in \CC$ as the weak limit of the laws of solutions to \eqref{SDDE-X-eps}.
For the given weak solution $X$ with $\bX_0=x^0\in \CC$, let $W$ be the corresponding Wiener process on a filtered probability space $(\Omega, \Ff, \{\Ff_t\}, \P)$.

We fix some  positive $\gamma<\alpha\wedge(2\beta-1)$. We take $Q>0$ (a free parameter, whose value will be specified later) and denote by
$B^{Q, x^0}_{1/3}(-r,0)$ the set of $x\in \CC$ such that for some $v\leq r$
$$
x(t)=x^0(t), \quad t\in [-r, -r+v], \quad x|_{[-r+v,0]}\in B^{Q, x^0}_{1/3}(-r+v,0).
$$

We define
$$
K_Q=B^{Q, x^0}_{1/3}(-r,0)\cup B^{Q}_{1/3}(-r,0),
$$
which is a compact set in $\CC$, and put
$$
\upsilon_\eps=\left(\sup_{x\in K_Q}|a^\eps(x)-a(x)|\right)^{1/\alpha}\vee\left(\sup_{x\in K_Q}\norm{\sigma^\eps(x)-\sigma(x)}\right)^{1/\beta}.
$$
Then
\be\label{comparison}
\sup_{x\in K_Q}|a^\eps(x)-a(x)|\leq \upsilon_\eps^\alpha, \quad \sup_{x\in K_Q}\norm{\sigma^\eps(x)-\sigma(x)}\leq \upsilon_\eps^\beta,
\ee
and we have $\upsilon_\eps\to 0$. In particular, there exists $\eps_0>0$ such that $|\upsilon_\eps|\leq 1$ for $\eps\leq \eps_0$. In what follows we consider $\eps\leq \eps_0$ only.

Consider a family of processes
$Y^\eps$ defined by the SDDEs
\be\label{SDDE-X-eps-ups}
\di Y^\eps(t)=a^\eps(\bY_t^\eps)\, \di t +\sigma^\eps( \bY_t^\eps)\, \di W(t)+\upsilon^{\gamma-1}_\eps (X(t)-Y^\eps(t))1_{t\leq \tau_\eps}\,\di t, \quad \bY_0^\eps=x^0,
\ee
where
\be\label{tau}
\tau_\eps=\inf\{s: |X(s)-Y^\eps(s)|\geq \upsilon_\eps\}.
\ee
Since $a^\eps, \sigma^\eps$ are Lipschitz continuous, the processes $Y^\eps$ are well defined. Define also
$$
\theta_Q=\inf\{s: X|_{[0,s]}\not\in B_{1/3}^Q(0,s)\}
$$
and observe that the calculation from the previous section yields
\be\label{theta}
\theta_Q\to \infty, \quad Q\to \infty
\ee
with probability 1.

 We have  by It\^o's formula
 \be\label{Ito}
 |Y^\eps(t)-X(t)|^2=\int_{0}^tA^\eps(s)\, \di s + \int_{0}^t\Sigma^\eps(s)\, \di W(s), \quad t\geq 0,
 \ee
 where
 $$\ba
 A^\eps(s)=2\Big(a^\eps(\bY_s^\eps)-a(\bX_s), Y^\eps(s)-X(s)\Big)
 &+\norm{\sigma^\eps(\bY_s^\eps)-\sigma(\bX_s)}^2-2\upsilon^{\gamma-1}_\eps \Big|Y^\eps(s)-X(s)\Big|^2 1_{s\leq \tau_\eps},
 \ea
 $$
 $$
 \Sigma^\eps(s)=2(Y^\eps(s)-X(s))^\top\Big(\sigma^\eps(\bY_s^\eps)-\sigma(\bX_s)\Big).
 $$
 By the Cauchy inequality,
 $$
 \norm{\sigma^\eps(\bY_s^\eps)-\sigma(\bX_s)}^2\leq 2\norm{\sigma^\eps(\bY_s^\eps)-\sigma^\eps(\bX_s)}^2+2\norm{\sigma^\eps(\bX_s)-\sigma(\bX_s)}^2.
 $$
  Then $A^\eps(s)\leq A^{\eps,1}(s)+A^{\eps,2}(s) $ with
 $$\ba
 A^{\eps,1}(s)&=2\Big(a^\eps(\bX_s)-a (\bX_s), Y^\eps(s)-X(s)\Big)
 +2\norm{\sigma^\eps(\bX_s)-\sigma(\bX_s)}^2,
 \\
 A^{\eps,2}(s)&=2\Big(a^\eps(\bY_s^\eps)-a^\eps(\bX_s), Y^\eps(s)-X(s)\Big)+2\norm{\sigma^\eps(\bY_s^\eps)-\sigma^\eps(\bX_s)}^2-2\upsilon^{\gamma-1}_\eps \Big|Y^\eps(s)-X(s)\Big|^2 1_{s\leq \tau_\eps}.
 \ea
 $$
 Note that
\be\label{bounds}
\bX_t\in B^Q_{1/3}(-r,0), \quad t\leq \theta_Q, \quad |Y^\eps(t)-X(t)|\leq \upsilon_\eps, \quad t\leq \tau_\eps,
\ee
which simply gives
\be\label{b2}\ba
 |A^{\eps,1}(s)|&\leq 2\sup_{x\in B^Q_{1/3}(-r,0)}\Big(|a^\eps(x)-a(x)|\upsilon_\eps+\norm{\sigma^\eps(x)-\sigma(x)}^2\Big)
\leq C\Big(\upsilon_\eps^{\alpha+1}+\upsilon_\eps^{2\beta}\Big), \quad s\leq \tau_\eps\wedge \theta_Q,
 \ea
\ee
see \eqref{comparison}. The second inequality in \eqref{bounds} clearly yields
$$
\|\bY^\eps_t-\bX_t\|\leq \upsilon_\eps, \quad t\leq \tau_\eps.
$$
Recall that $a^\eps, \sigma^\eps$  satisfy analogues of \eqref{H_gamma}, \eqref{H_delta} uniformly in $\eps$, and $\upsilon_\eps<1$. Then
\be\label{b3}
A^{\eps,2}(s)\leq C_a\upsilon_\eps^\alpha|Y^\eps(s)-X(s)|-2\upsilon_\eps^{\gamma-1}|Y^\eps(s)-X(s)|^2+C\upsilon_\eps^{2\beta}, \quad s\leq \tau_\eps.
\ee
By the Cauchy inequality,
$$
C_a\upsilon_\eps^\alpha|Y^\eps(s)-X(s)|\leq {C_a\over 2}\Big(\upsilon_\eps^{\alpha+1}+\upsilon_\eps^{\alpha-1}|Y^\eps(s)-X(s)|^2\Big).
$$
On the other hand, since $\gamma<\alpha$ and $\upsilon_\eps\to 0$, we can choose $\eps_1\in (0, \eps_0]$ such that
$$
{C_a\over 2}\upsilon_\eps^{\alpha-1}-2\upsilon_\eps^{\gamma-1}\leq -\upsilon_\eps^{\gamma-1}, \quad \eps\leq \eps_1.
$$
In what follows, we consider $\eps\leq \eps_1$ only. For such $\eps$, we get by \eqref{b2}, \eqref{b3}
\be\label{bA}
A^{\eps}(s)\leq -\upsilon_\eps^{\gamma-1}|Y^\eps(s)-X(s)|^2+C\Big(\upsilon_\eps^{\alpha+1}+\upsilon_\eps^{2\beta}\Big), \quad s\leq \tau_\eps\wedge \theta_Q,
\ee

A similar argument applies to $\Sigma^\eps(s)=\Sigma^{\eps,1}(s)+\Sigma^{\eps,2}(s),$
$$
\Sigma^{\eps,1}(s)=2(Y^\eps(s)-X(s))^\top(\sigma^\eps(\bX_s)-\sigma(\bX_s)), \quad \Sigma^{\eps,2}(s)=2(Y^\eps(s)-X(s))^\top(\sigma^\eps(\bY_s^\eps)-\sigma^\eps(\bX_s)).
$$
Likewise to  \eqref{b2},
\be\label{b2_bis}
 |\Sigma^{\eps,1}(s)|\leq 2\upsilon_\eps\sup_{x\in B^Q_{1/3}(-r,0)}\norm{\sigma^\eps(x)-\sigma(x)}, \quad s\leq \tau^\eps\wedge \theta_Q,
\ee
 and likewise  to \eqref{b3},
\be\label{b3_bis}
|\Sigma^{\eps,2}(s)|\leq C  \upsilon^{1+\beta}_\eps, \quad s\leq \tau^\eps,
\ee
which gives
\be\label{bSigma}
|\Sigma^{\eps}(s)|\leq C  \upsilon^{1+\beta}_\eps, \quad s\leq \tau^\eps \wedge \theta_Q.
\ee

We are going to  apply Lemma \ref{lem1} with a fixed $T>0$ and
$$
U(t)=|Y^\eps(t)-X(t)|^2, \quad \tau= \tau^\eps \wedge \theta_Q\wedge T.
$$
By \eqref{bA}, \eqref{bSigma}  the assumptions of Lemma \ref{lem1} hold with
$$
\lambda=\upsilon_\eps^{\gamma-1}, \quad A=C\Big(\upsilon_\eps^{\alpha+1}+\upsilon_\eps^{2\beta}\Big), \quad B= C\upsilon_\eps^{2+2\beta}.
$$
Recall that $\gamma<\alpha$, hence there exists $\chi>0$ such that
\be\label{asympA}
A\lambda^{-1}= C\Big(\upsilon_\eps^{\alpha+2-\gamma}+\upsilon_\eps^{2\beta+1-\gamma}\Big)\leq \upsilon_\eps^{2+\chi}
\ee
for $\eps>0$ small enough. Next, we have
$$
B^{1/2}\lambda^{-\delta}= C\upsilon_\eps^{1+\beta+\delta(1-\gamma)},
$$
and
$$
1+\beta+{1\over 2}(1-\gamma)=2+{2\beta-1-\gamma\over 2}>2.
$$
That is, we can fix $\delta<1/2$ close enough to $1/2$ and then choose $\chi>0$ small enough such that, in addition to \eqref{asympA},
$$
B^{1/2}\lambda^{-\delta}\leq \upsilon_\eps^{2+2\chi}
$$
for $\eps>0$ small enough. Denote
$$
H_\eps=\Big\{\sup_{t\leq \tau^\eps \wedge \theta_Q\wedge T}|Y^\eps(t)-X(t)|^2\geq 2\upsilon_\eps^{2+\chi}\Big\}.
$$
Then by Lemma \ref{lem1} with $R=\upsilon_\eps^{-\chi}$ we have
\be\label{H_neg}
\P\left(H_\eps\right)\leq C_1 e^{-C_2  \upsilon_\eps^{-2\chi}}\to 0, \quad \eps\to 0.
\ee
On the other hand, clearly
$$
\upsilon_\eps^{2+\chi}=o(\upsilon_\eps^2),\quad \eps\to 0
$$
and thus for $\eps$ small enough we have
$$
\tau^\eps \wedge \theta_Q\wedge T=\theta_Q\wedge T
$$
on $\Omega\setminus H_\eps$. This yields that, for $\eps>0$ small enough,
\be\label{final_bound}
\sup_{t\in [0,T]}|Y^\eps(t)-X(t)|\leq \upsilon_\eps\quad \hbox{on the set}\quad \{\theta_Q\geq T\}\setminus H_\eps.
\ee

 Now we can finish the proof. Let $T>0$ be fixed and $F$ be a bounded continuous function on $C([0, T],\Re^n)$. We have
 $$
 |\upsilon^{\gamma-1}_\eps (X(t)-Y^\eps(t))1_{t\leq \tau_\eps}|\leq \upsilon_\eps^\gamma
 $$
 with positive $\gamma$ and $\upsilon_\eps\to 0.$ This immediately gives \eqref{w2-RN}, which leads to the bound similar to \eqref{TV}:
$$
d_{TV}\Big(\mathrm{Law}(Y^\eps|_{[0, T]}), \mathrm{Law}(X^\eps|_{[0, T]})\Big)\to 0, \quad \eps\to 0.
 $$
 Since $F$ is bounded, this gives
  $$
 \E F(Y^\eps|_{[0, T]})-\E F(X^\eps|_{[0, T]})\to 0, \quad \eps\to 0.
 $$
 On the other hand, it follows from \eqref{H_neg}, \eqref{final_bound} that, on the set $\{\theta_Q\geq T\}$,
 $$
 Y^\eps|_{[0, T]}\to X|_{[0, T]}, \quad \eps\to 0
 $$
in probability in $C([0, T],\Re^n)$. Then
$$
\limsup_{\eps\to 0}|\E F(Y^\eps|_{[0, T]})-\E F(X|_{[0, T]})|\leq 2\sup_{x}|F(x)|\P(\theta_Q\geq T).
$$
Combining these two inequalities, we get
\be\label{w_limsup}
\limsup_{\eps\to 0}|\E F(X^\eps|_{[0, T]})-\E F(X|_{[0, T]})|\leq 2\sup_{x}|F(x)|\P(\theta_Q\geq T).
\ee
Recall that the choice of $Q$ determines further details in the construction of the generalized coupling, e.g.  the choice of $\upsilon^\eps$. However,  \eqref{w_limsup} does not involve $Y^\eps$, and $Q>0$ therein is just a free parameter. Taking $Q\to \infty$ and using \eqref{theta}, we finally deduce that
$$
\E F(X^\eps|_{[0, T]})\to \E F(X|_{[0, T]}), \quad \eps\to 0.
$$
This completes the proof of weak uniqueness, since an arbitrary weak solution $X^\eps$ to \eqref{SDDE} is now uniquely specified on any finite time interval $[0, T]$ as the weak limit of the solutions to \eqref{SDDE-X-eps}.

\subsection{Continuity and the Markov property} Denote by $P_{t,x} ,t\geq 0, x\in \CC$ the law of $\bX_t$, where $X$ is the (unique in law) solution to \eqref{SDDE} with $\bX_0=x$. Denote by $\{P_{t,x}^\eps\}$ the corresponding laws for the approximating sequence $X^\eps$ defined by \eqref{SDDE-X-eps}, and consider the respective families of integral operators
$$
T_tf(x)=\int_{\CC}f(y)P_{t,x}(\di y), \quad T_t^\eps f(x)=\int_{\CC}f(y)P_{t,x}^\eps(\di y), \quad f\in C_b(\CC).
$$
We have just proved that, for a given $f\in C_b(\CC)$,
$$
 T_t^\eps f(x)\to  T_tf(x), \quad \eps\to 0
$$
point-wise. This convergence is actually uniform on each compact subset $K$ of $\CC$: this can be obtained using just a slight  modification of the previous proof, where the H\"older ball $B^Q_{1/3}(-r,0)$ in the choice of $\upsilon_\eps$ is replaced by  $B^Q_{1/3}(-r,0)\cup K$;
we omit the details.  Then the functions $T_tf, t\geq 0, f\in C_b(\CC)$ are  continuous and bounded.

 Now the Markov property for $\bX$ is obtained from the same property for $\bX^\eps$ by the usual approximation argument: for arbitrary $t>s>s_1, \dots s_k$, $f\in C_b(\CC)$ and $G\in C_b(\CC^{k+1})$, we have
$$\ba
\E f(\bX_t)G(\bX_s, \dots, \bX_{s_k})&=\lim_{\eps\to 0}\E f(\bX_t^\eps)G(\bX_s^\eps, \dots, \bX_{s_k}^\eps)
\\&=\lim_{\eps\to 0}\E T_{t-s}^\eps f(\bX_s^\eps)G(\bX_s^\eps, \dots, \bX_{s_k}^\eps)=\E T_{t-s}f(\bX_s)G(\bX_s, \dots, \bX_{s_k});
\ea
$$
in the last identity we use  that $X^\eps\to X$ weakly and $T_{t-s}^\eps f\to T_{t-s}f$ uniformly on compacts. This proves that
$\bX$ is a time-homogeneous Markov process with the transition function $\{P_{t,x}(\di y)\}$. The Feller property has  already been proved: for $f\in C_b(\CC)$, the functions $T_tf, t\geq 0$ also belong to $C_b(\CC)$.

\section{Proofs of Theorem \ref{t2} and Theorem \ref{t23}}\label{s5} Let us give a short outline.
We will prove Theorem \ref{t2} in two steps. First, we will show that for $N$ large enough,  $d_{N, \gamma}$ is contracting on the set
\be\label{near-the-diagonal}
D_{N, \gamma}=\{(x,y): d_{N, \gamma}(x,y)<1\}.
\ee
Since $d_{N, \gamma}(x,y)\leq 1$ everywhere, this will immediately yield that $d_{N, \gamma}$ is non-expanding.  Then we will prove the following support-type statement: for any given $z\in \CC$, $\delta>0$, and $h >r$,
\be\label{support}
\inf_{x\in B}\P_x(\|\bX_h-z\|\leq \delta)>0,
\ee
where $B$ is either a bounded set, or $B=H_c$ and \eqref{gba1}, \eqref{gbsigma1} hold true. These two principal statements combined will complete the proof of Theorem \ref{t2}; for convenience of the reader we prove them separately in Sections \ref{s51} and \ref{s52} below. In Section \ref{s53} we prove Theorem \ref{t23} as a corollary of Theorem \ref{t2} and the general theory.

\subsection{Contraction property of  $d_{N, \gamma}$ on $D_{N, \gamma}$.}\label{s51} The proof is based on the generalized coupling construction, very similar to the one introduced in Section \ref{s42}. Recall that the coefficients of \eqref{SDDE} are not Lipschitz continuous. To overcome this minor difficulty, we will systematically use the following trick: first, we make the construction for Lipschitz continuous coefficients; then, we provide estimates for the generalized coupling, which involve the constants from the conditions $\mathbf{H}_1$ -- $\mathbf{H}_4$ only; finally, we remove the additional assumption for the coefficients to be  Lipschitz continuous by an approximation argument.

For given $x,y\in \CC$, take $\upsilon=\|x-y\|$. Let $X$ be the solution to \eqref{SDDE} with $\bX_0=x$ and $Y$ be the solution to the following controlled equation, similar to \eqref{SDDE-X-eps-ups}:
$$
\di Y(t)=a(\bY_t)\, \di t +\sigma(\bY_t)\, \di W(t)+\upsilon^{\gamma-1} (X(t)-Y(t))1_{t\leq \tau}\,\di t, \quad \bY_0^\eps=y,
$$
$$
\tau=\inf\{s: |X(s)-Y(s)|\geq 2\upsilon\}.
$$
At the moment,   we  additionally assume $a,\sigma$ to be Lipschitz continuous, hence  $X,Y$ are  well defined as the strong solutions to the corresponding equations.

We repeat, in a slightly different and actually simpler setting, the calculations from Section \ref{s42}. Namely,  by It\^o's formula,
$$
 |Y(t)-X(t)|^2=\upsilon^2+\int_{0}^tA(s)\, \di s + \int_{0}^t\Sigma(s)\, \di W(s), \quad t\geq 0,
$$
 $$\ba
 A(s)=2\Big(a(\bY_s)-a(\bX_s), Y(s)-X(s)\Big)
 &+\norm{\sigma(\bY_s)-\sigma(\bX_s)}^2-2\upsilon^{\gamma -1} \Big|Y(s)-X(s)\Big|^21_{s \le \tau},
 \ea $$
 $$
 \Sigma(s)=2(Y(s)-X(s))^\top\Big(\sigma(\bY_s)-\sigma(\bX_s)\Big).
 $$
 We will take $\|x-y\|=\upsilon\leq \upsilon_0$ with $\upsilon_0>0$ small enough. In particular, we will have $2\upsilon\leq 1$, which allows one to apply \eqref{H_gamma}, \eqref{H_delta} and get similarly to \eqref{bA}, \eqref{bSigma}
\be\label{bASigma}
 A(s)\leq -\upsilon^{\gamma-1}|Y(s)-X(s)|^2+C\Big(\upsilon^{\alpha+1}+\upsilon^{2\beta}\Big), \quad |\Sigma(s)|\leq C  \upsilon^{1+\beta},
 \quad s\leq \tau.
 \ee
 We take $\chi, \delta, \kappa$ the same as in Section \ref{s42}, and  use Lemma \ref{lem1} and the same calculation as in the proof of \eqref{final_bound}. We obtain  that, for a given $T>0$, there exist $\upsilon_0>0$ and $C_1, C_2>0$ such that for $\|x-y\|=\upsilon\leq \upsilon_0$
\be\label{final_bound2}
 \P\left(\sup_{t\leq T}\Big(|Y(t)-X(t)|^2- e^{-\upsilon^{\gamma-1} t}\|x-y\|^2\Big)\geq\upsilon^{2+\chi}+\upsilon^{(4+\chi)\delta-\kappa}\right)\leq C_1 e^{-C_2  \upsilon^{-2\kappa}}.
 \ee
 Note that the choice of the indices $\gamma, \chi, \delta, \kappa$ in the above construction is determined by the H\"older indices $\alpha, \beta$; once this choice is fixed, the level  $\upsilon_0$ and the constants $C_1, C_2$ depend only on $T$ and the constants from the assumptions \eqref{H_gamma}, \eqref{H_delta}.

 Now, let $h>r$ be fixed. The inequality
$$
 \sup_{t\leq h}\Big(|Y(t)-X(t)|^2- e^{-\upsilon^{\gamma-1} t}\|x-y\|^2\Big)\leq\upsilon^{2+\chi}+\upsilon^{(4+\chi)\delta-\kappa}
$$
 yields the bound
 $$
 \|\bX_h-\bY_h\|\leq\left(\upsilon^{-2}e^{-\upsilon^{\gamma-1} (h-r)}+\upsilon^{2+\chi}+\upsilon^{(4+\chi)\delta-\kappa}\right)^{1/2}\|x-y\|.
 $$
 Clearly,
 $$
 \upsilon^{-2}e^{-\upsilon^{\gamma-1} (h-r)}+\upsilon^{2+\chi}+\upsilon^{(4+\chi)\delta-\kappa}\to 0, \quad \upsilon\to 0.
 $$
 Then, taking $\upsilon_0$ we obtain finally that there exists $\upsilon_0>0$ such that
 \be\label{pre-contraction}
 \P\left(\|\bX_h-\bY_h\|\geq  {1\over 2}\|x-y\|\right)\leq C_1 \exp(-C_2  \|x-y\|^{-2\kappa}), \quad \|x-y\|\leq \upsilon_0.
 \ee
 On the other hand,
 the control term
 $$
 \chi(t)=\upsilon^{\gamma-1} (X(t)-Y(t))1_{t\leq \tau}
 $$
 satisfies
 $\chi(t)\leq 2\|x-y\|^{\gamma}, t\geq 0.$ Similarly to \eqref{TV}, from this and $\mathbf{H}_3$ we get that
\be\label{TV1}
d_{TV}\Big(\mathrm{Law}(\bY_h), P_y^h\Big)\leq C_3\|x-y\|^{\gamma},  \quad x,y\in \CC.
\ee

Now it is easy to perform the ``reimbursement'' step; that is, to derive a required  bound for
$\E d_{N,\gamma}(\bX_h, \wh\bY_h)$, where   $X, \wh Y$ is a properly constructed  (true) coupling. For the reader's convenience we formulate this step in a separate proposition, which is a  modification of statement (i) of \cite[Theorem 2.4]{BKS18}.

\begin{prop}\label{p_coup} Let, for a family $\{\mu^x, x\in \CC\}\subset \mathcal{P}(\CC)$, the families of $\CC$-valued random elements $\{\xi^{x,y},  x,y\in \CC\}$, $\{\eta^{x,y}, x,y\in \CC\}$ be given such that
\begin{itemize}
 \item[(i)] $\mathrm{Law}(\xi^{x,y})=\mu^x,$ and for some $\gamma\in (0,1], \upsilon>0,  C>0$,
  $$
  d_{TV}\Big(\mathrm{Law}(\eta^{x,y}), \mu^y\Big)\leq C\|x-y\|^{\gamma},  \quad \|x-y\|\leq \upsilon;
  $$
  \item[(ii)] for some $\theta\in (0,1)$, and a function $p(s)=o(s^\gamma), s\to 0$, $p(s)\leq 1$,
  $$
  \P\left(\|\xi^{x,y}-\eta^{x,y}\|> \theta\|x-y\|\right)\leq p(\|x-y\|), \quad \|x-y\|\leq \upsilon.
  $$

\end{itemize}

Then for any $\theta_1\in  (\theta^\gamma, 1)$ there exists $N_0=N_0(\gamma, \upsilon, C, \theta, \theta_1, p(\cdot))$ such that for $N\geq N_0$
$$
d_{N, \gamma}(\mu^x, \mu^y)\leq \theta_1 d_{N, \gamma}(x,y)
$$
on the set $\{(x,y):d_{N, \gamma}(x,y)<1\}$.
\end{prop}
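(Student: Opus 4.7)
The approach is the standard ``reimbursement'' step of the C-n-R strategy: we correct the second marginal of the given generalized coupling $(\xi^{x,y},\eta^{x,y})$ by splicing in a maximal coupling of $\mathrm{Law}(\eta^{x,y})$ and $\mu^y$. Concretely, for each pair $(x,y)$ with $\|x-y\|\le\upsilon$ the standard gluing lemma produces, on a possibly enlarged probability space, a triple $(\xi^{x,y},\eta^{x,y},\wh{\eta}^{x,y})$ whose $(1,2)$-marginal is the originally prescribed joint law and whose $(2,3)$-marginal is a maximal coupling of $\mathrm{Law}(\eta^{x,y})$ and $\mu^y$; in particular $\wh{\eta}^{x,y}\sim\mu^y$ and
$$
\P\big(\wh{\eta}^{x,y}\neq\eta^{x,y}\big)\le d_{TV}\big(\mathrm{Law}(\eta^{x,y}),\mu^y\big)\le C\|x-y\|^\gamma.
$$
Since $(\xi^{x,y},\wh{\eta}^{x,y})$ is then a genuine coupling of $(\mu^x,\mu^y)$, the definition of the coupling distance gives $d_{N,\gamma}(\mu^x,\mu^y)\le \E d_{N,\gamma}(\xi^{x,y},\wh{\eta}^{x,y})$.

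Using $d_{N,\gamma}\le 1$, I split this expectation according to the events $\{\wh{\eta}^{x,y}\neq\eta^{x,y}\}$ and, inside its complement, the event $A=\{\|\xi^{x,y}-\eta^{x,y}\|>\theta\|x-y\|\}$ from hypothesis (ii). The first event contributes at most $C\|x-y\|^\gamma$ and the event $A$ contributes at most $p(\|x-y\|)$, while on the remaining favorable event one has $d_{N,\gamma}(\xi^{x,y},\wh{\eta}^{x,y})=d_{N,\gamma}(\xi^{x,y},\eta^{x,y})\le N\theta^\gamma\|x-y\|^\gamma$. Summing,
$$
d_{N,\gamma}(\mu^x,\mu^y)\le N\theta^\gamma\|x-y\|^\gamma+p(\|x-y\|)+C\|x-y\|^\gamma.
$$

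It remains to convert this into the contraction claim on $\{(x,y):d_{N,\gamma}(x,y)<1\}$. The case $x=y$ is trivial; otherwise $d_{N,\gamma}(x,y)=N\|x-y\|^\gamma>0$ and $\|x-y\|<N^{-1/\gamma}$. Dividing the previous bound by $N\|x-y\|^\gamma$ yields
$$
\frac{d_{N,\gamma}(\mu^x,\mu^y)}{d_{N,\gamma}(x,y)}\le\theta^\gamma+\frac{C}{N}+\frac{1}{N}\cdot\frac{p(\|x-y\|)}{\|x-y\|^\gamma}.
$$
Set $\delta=(\theta_1-\theta^\gamma)/2>0$. Using the assumption $p(s)=o(s^\gamma)$, pick $s_0\in(0,\upsilon]$ such that $p(s)/s^\gamma\le\delta$ for all $s\in(0,s_0]$, and then choose $N_0\ge\max(s_0^{-\gamma},\,C/\delta)$; note that $N_0$ depends only on the parameters listed in the statement. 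For $N\ge N_0$ and $(x,y)$ with $d_{N,\gamma}(x,y)<1$ we then have $\|x-y\|\le N^{-1/\gamma}\le s_0\le\upsilon$, so the last two terms above are each at most $\delta$, and the right-hand side is at most $\theta^\gamma+2\delta=\theta_1$, as desired.

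The main obstacle is not computational but structural: one has to carry the pathwise proximity provided by (ii) and the total-variation closeness provided by (i) on a \emph{single} probability space whose $(1,3)$-marginal is an honest coupling of $\mu^x$ and $\mu^y$. This is precisely what the gluing lemma delivers (and is the core mechanism behind \cite[Theorem~2.4]{BKS18}); once it is in place, everything else is elementary bookkeeping, with the three error contributions absorbed into the single contraction factor $\theta_1$ by taking $N$ sufficiently large.
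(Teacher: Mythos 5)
Your proof is correct and follows essentially the same route as the paper: the gluing lemma combined with a maximal coupling of $\mathrm{Law}(\eta^{x,y})$ and $\mu^y$ produces the true coupling, and the expectation is split over exactly the same three events, yielding the bound $N\theta^\gamma\|x-y\|^\gamma+p(\|x-y\|)+C\|x-y\|^\gamma$. The only (cosmetic) difference is in the final choice of $N_0$: the paper absorbs $p$ via the global constant $C_p=\sup_s s^{-\gamma}p(s)$, whereas you use the $o(s^\gamma)$ property locally together with $N_0\ge s_0^{-\gamma}$ — both are valid.
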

\begin{proof} Take $N_1=\upsilon^{-\gamma}$, then for $N\geq N_1$
$$
d_{N, \gamma}(x,y)<1\quad \Leftrightarrow\quad  N\|x-y\|^\gamma<1\quad \Rightarrow\quad \|x-y\|<\upsilon.
$$ In what follows we take $N\geq N_1$ and $x,y$ such that $d_{N, \gamma}(x,y)<1$; then (i) and (ii) hold true.

The following useful fact is well known (\cite[Problem 11.8.8]{Dudley}, see also \cite[Lemma 4.3.2]{K17}): if $(\xi, \eta)$ and $(\xi',\eta')$ are two pairs of random elements valued in a Borel measurable space, such that $\eta$ and $\xi'$ have the same distribution, then on  a properly chosen probability space there exists  a triple of  random elements $\zeta_1$, $\zeta_2$, $\zeta_3$ such that the law of $(\zeta_1, \zeta_2)$  coincides with the law of $(\xi, \eta)$ and the law of $(\zeta_2, \zeta_3)$   coincides with the law of $(\xi', \eta')$. On the other hand, by the assumption (i) and the Coupling lemma  (e.g. \cite[Section 1.4]{Thor} or \cite[Theorem 2.2.2]{K17}), on a properly chosen probability space there exists a pair of random elements $\xi', \eta'$ such that $\mathrm{Law}(\xi')=\mathrm{Law}(\eta^{x,y}),$ $\mathrm{Law}(\eta')=\mu^y, $ and
$$
\P(\xi'\not =\eta')=d_{TV}\Big(\mathrm{Law}(\eta^{x,y}), \mu^y\Big)\leq C\|x-y\|^{\gamma};
$$
here we adopt the definition
$$
d_{TV}(\mu, \nu)=\sup_{A}|\mu(A)-\nu(A)|.
$$
Take $\xi=\xi^{x,y}, \eta=\eta^{x,y}$ and consider the corresponding triple $\zeta_1, \zeta_2, \zeta_3$. Then $\zeta_1, \zeta_3$ is a (true) coupling for $\mu^x, \mu^y$, and
$$
  \P\left(\|\zeta_1-\zeta_2\|\geq \theta\|x-y\|\right)\leq p(\|x-y\|), \quad  \P(\zeta_2\not =\zeta_3)\leq C\|x-y\|^{\gamma}.
$$
Recall that $d_{N, \gamma}\leq 1$, hence
$$\ba
\E d_{N, \gamma}(\zeta_1, \zeta_3)&\leq \E d_{N, \gamma}(\zeta_1, \zeta_2)+\P(\zeta_2\not =\zeta_3  )
\\&\leq \E d_{N, \gamma}(\zeta_1, \zeta_2)1_{\|\zeta_1-\zeta_2\|\leq \theta\|x-y\|}+\P\left(\|\zeta_1-\zeta_2\|\geq \theta\|x-y\|\right)+\P(\zeta_2\not =\zeta_3  ).
\ea
$$
Recall that $d_{N, \gamma}(x,y)<1$, hence
$$
\E d_{N, \gamma}(\zeta_1, \zeta_2)1_{\|\zeta_1-\zeta_2\|\leq \theta\|x-y\|}\leq N\theta^\gamma \|x-y\|^\gamma,
$$
and
$$
N \|x-y\|^\gamma= d_{N, \gamma}(x,y).
$$
Then
$$\ba
d_{N, \gamma}(\mu^x, \mu^y)&\leq \E d_{N, \gamma}(\zeta_1, \zeta_3)\leq N\theta^\gamma \|x-y\|^\gamma+p(\|x-y\|)+C\|x-y\|^\gamma
\\&\leq \left(\theta^\gamma +{1\over N}\|x-y\|^{-\gamma}p(\|x-y\|)+{C\over N}\right) d_{N, \gamma}(x,y).
\ea
$$
Since $p(s)\leq 1$ and $s^{-\gamma}p(s)\to 0,s\to 0$, we have that
$$
C_p:=\sup_{s>0}s^{-\gamma}p(s)<\infty.
$$
Define $N_2$  by the identity
$$
\theta^\gamma {C_p+C\over N_2}=\theta_1.
$$
Then the required statement holds true for $N_0=\max(N_1, N_2)$.
\end{proof}

Now, we can complete the proof of the contraction property of  $d_{N, \gamma}$ for $\bX^h$ on $D_{N, \gamma}$. Let $\gamma, \kappa, \upsilon_0, C_1, C_2, C_3$ be the same as in \eqref{pre-contraction} and \eqref{TV1}. We apply Proposition \ref{p_coup} with
$$
\upsilon=\upsilon_0, \quad C=C_3, \quad p(s)=\Big(C_1 \exp(-C_2  s^{-2\kappa})\Big)\wedge 1, \quad \theta= 2^{-1}, \quad \theta_1=2^{-1}(1+2^{-\gamma}),
$$
and obtain that there exists $N_0$ such that
\be\label{contracting3}
d_{N, \gamma}(P_x^h, P_y^h)\leq (2^{-1}+2^{-1-\gamma}) d_{N, \gamma}(x,y), \quad (x,y)\in D_{N, \gamma}, \quad N\geq N_0,
\ee
which provides the required  contraction property under  the additional assumption that $a, \sigma$ are Lipschitz continuous.

The last step in the proof is to remove  this limitation; for that, we use an approximation procedure. The choice of the index $\gamma$ and the constant $N_0$ in  \eqref{contracting3} is determined only by the assumptions $\mathbf{H}_1 - \mathbf{H}_3.$  Let a family of processes $\{X^\eps\}$ be defined by \eqref{SDDE-X-eps} with  $a^\eps, \sigma^\eps$ same as in Section \ref{s41}, then the corresponding transition probabilities satisfy a uniform analogue of \eqref{contracting3}:
\be\label{contracting4}
d_{N, \gamma}(P_x^{h, \eps}, P_y^{h, \eps})\leq (2^{-1}+2^{-1-\gamma}) d_{N, \gamma}(x,y), \quad (x,y)\in D_{N, \gamma}, \quad N\geq N_0, \quad \eps>0.
\ee
We have already proved in Section \ref{s42} that $P_x^{h, \eps}\to P_x^{h}$ weakly as $\eps\to 0$. Note that $d_{N, \gamma}(x,y)$ is a bounded metric on $\CC$, and the convergence in this metric is the same as in the standard one. Hence weak convergence in $\mathcal{P}(\CC)$ is equivalent to convergence w.r.t. the  coupling distance $d_{N, \gamma}$. In particular,
$$
d_{N, \gamma}(P_x^{h, \eps}, P_y^{h, \eps})\to d_{N, \gamma}(P_x^{h}, P_y^{h}), \quad \eps\to 0,
$$
and thus \eqref{contracting3} follows from  \eqref{contracting4}.

\begin{rem} The generalized coupling construction, used in the proof above, can be  also used for a study of the continuous time family $P_x^t, t\geq 0$. Namely, using \eqref{final_bound2} in a similar way as in the proof of Proposition \ref{p_coup}, we get that there exists a constant $C_h$ such that
\be\label{lip}
d_{\gamma,N}(P_x^t, P_y^t)\leq C_hd_{\gamma,N}(x,y), \quad x,y\in \CC, \quad t\in [0,h].
\ee
\end{rem}

\begin{rem}\label{r52} There is another possibility,  not used in  the previous proof: instead of making the ``reimbursement step'' at the time segment $[0,h]$, one can  iterate the ``control'' step on the segments $[h,2h], [2h, 3h], \dots$. The corresponding pair of processes $\bX_t, \bY_t\geq 0$ will satisfy then \be\label{pre-contraction-l}
 \P\left(\|\bX_{lh}-\bY_{lh}\|\geq  {1\over 2^l}\|x-y\|\right)\leq C_1 \exp(-C_2 2^{2\kappa l}  \|x-y\|^{-2\kappa}), \quad \|x-y\|\leq \upsilon_0, \quad l\geq 1
 \ee
 by \eqref{pre-contraction}. This bound combined with the Markov property and \eqref{TV1} will give
  \be\label{TV2}\ba
d_{TV}\Big(\mathrm{Law}(\bY), P_y\Big)&\leq C_3\|x-y\|^{\gamma}
\\&+\sum_{l=1}^\infty\left(C_32^{-\gamma l}\|x-y\|^{\gamma}+C_1 \exp(-C_2 2^{2\kappa l}  \|x-y\|^{-2\kappa})\right),  \|x-y\|\leq \upsilon_0,
\ea
\ee
where $P_y$ denotes the law of $\bX_t, t \geq 0$ with $\bX_0=y$ in the path space $C([0, \infty), \CC)$, and $\mathrm{Law}(\bY)$ is understood in the same sense. That is, essentially the same construction as in the  above proof gives a generalized coupling for the entire path of the segment process. Making  now the ``reimbursement step'' similarly to (and simpler than) Proposition \eqref{p_coup}, one can construct a (true) coupling
$\bX_t, \bY_t, t \geq 0$ for $P_x, P_y$ such that
$$
\P(\|\bX_t- \bY_t\|\to 0, \, t\to \infty)\leq C_4\|x-y\|^\gamma.
$$
\end{rem}

\subsection{Proof of \eqref{support}.}\label{s52} We prove the support-type assertion \eqref{support} using a  stochastic control argument, which  is similar to, and simpler than,  the one from Section \ref{s42}, Section \ref{s51}. Thus we give the principal steps only,  and  omit the details.

Let for  a given $z\in \CC, h>r$ the function  $z^h\in C([0,h],\Re^n)$ be defined by
$$
z^h(t)=z(t-h), \quad t\in [h-r, h], \quad z^h(t)=z(h-r){t\over h-r}, \quad t\in [0,h-r].
$$
We consider a family of processes $X^{\lambda,x}, \lambda>0$ defined by
\be\label{SDDE-lambda}
\di X^{\lambda,x}(t)=a(\bX_t^{\lambda,x})\, \di t +\sigma(\bX_t^{\lambda,x})\, \di W(t)-\lambda \Big(X^{\lambda,x}(t)-z^h(t)\Big)\, \di t,\quad t\ge0, \quad \bX_0^{\lambda,x}=x.
\ee
Since we need these processes to be well defined, we assume for a while that $a, \sigma$ are  Lipschitz continuous. Then there exists $\lambda$ large enough such that
\be\label{support-lambda}
\inf_{x\in B}\P(\|\bX_h^{\lambda,x}-z\|\leq \delta)\geq {1\over 2};
\ee
recall that $B$ is either a bounded set, or $B=H_c$ and \eqref{gba1}, \eqref{gbsigma1} hold true. The proof of the above  statement is completely analogous to the proof of \eqref{final_bound2}, and is omitted. In addition, one has
\be\label{support-lambda2}
C_1(\lambda):=\sup_{x\in B}\E\sup_{t\in [0,h]}|X^{\lambda,x}(t)|^2<\infty.
\ee

On the other hand, $X^\lambda$ solves \eqref{SDDE} with $\di W(t)$ changed to
$$
\di W^{\lambda,x}(t)=\di W(t)-\lambda \sigma(\bX_t^\lambda)^{-1}X^{\lambda,x}(t)\, \di t.
$$
Since we  have assumed $a, \sigma$ to be Lipschitz continuous, for each $x\in \CC$ there exists a measurable mapping $\Phi_x: C([0, h], \Re^m)\to C([0, h], \Re^m)$ which resolves \eqref{SDDE} with the initial condition $\bX_0=x$ up to the time moment $h$; that is,
$$
X|_{[0,h]}=\Phi_x\Big(W|_{[0,h]}\Big).
$$
At the same time, we have
$$
X^{\lambda,x}|_{[0,h]}=\Phi_x\Big( W^{\lambda,x}|_{[0,h]}\Big).
$$
By Theorem \ref{tKLGirsanov} and \eqref{support-lambda2},
\be\label{KLsupport}
\sup_{x\in B}D_{KL}(\mathrm{Law}\, W^{\lambda,x}|_{[0,h]}\|\mathrm{Law}\, W|_{[0,h]})\leq {\lambda h\over 2}C_1(\lambda)\sup_{y\in \CC}\norm{\sigma(x)}=:C_2.
\ee
Now we apply \eqref{diffbound} with $\mu=\mathrm{Law}\, W^{\lambda,x}|_{[0,h]}$, $\nu=\mathrm{Law}\, W|_{[0,h]}$, and
$$
A=\{w\in C([0,h], \Re^m): \sup_{t\in [h-r,h]}|\Phi_x(w)(t)|\leq \delta\}
$$
to get for any $N>1$
$$
\inf_{x\in B} \P_x(\|\bX_h-z\|\leq \delta)\geq \frac1N \inf_{x\in B} \P(\|\bX_h^{\lambda,x}-z\|\leq \delta)-\frac{C_2+\log 2}{N\log N}\geq
\frac1{2N} -\frac{C_2+\log 2}{N\log N}.
$$
Taking $N=N_1=\exp(4C_2+4\log 2)$, we get
\be\label{support-final}
\inf_{x\in B} \P_x(\|\bX_h-z\|\leq \delta)\geq \frac1{4N_1},
\ee
 In the above construction $\lambda, C_1, C_2, N_1$ depend only on the constants from the assumption $\mathbf{H}_3$ and inequalities \eqref{gba}, \eqref{gbsigma},  the set $B$, and the time step $h$. That is, using the same approximation argument as in the previous section, we can get rid of the additional assumption that $a, \sigma$ are Lipschitz continuous.  This gives \eqref{support-final} without any extra assumptions, and completes the proof of \eqref{support}.

\subsection{Proof of Theorem \ref{t23}}\label{s53} Since $d_{\gamma, N}(x,y)$ decreases as a function of $\gamma$, without loss of generality we further assume that $\gamma$ satisfies the assumption $\gamma < \alpha\wedge (2\beta-1)$ from Theorem \ref{t2}. Fix $\ell$ such that
$$
\phi(1+\ell)>2C_V,
$$
where $\phi, C_V$ are respectively the function and the constant from the Lyapunov condition \eqref{lyap}.  Take $K_{V,\ell}=\{x:V(x)\leq \ell\}$, this set is either bounded under \eqref{level_sets}, or is contained in $H_c$ for $c$ large enough if  \eqref{level_sets1} holds.   Hence  by
Theorem \ref{t2} there exists $N$ such that $K_{V,\ell}$ is a $d_{N, \gamma}$-small set; that is, the condition \textbf{I}  of \cite[Theorem 4.5.2]{K17} holds true for
$$
d=d_{N, \gamma}, \quad B=K_{V,\ell}\times K_{V,\ell}.
$$
 On the other hand, by \cite[Theorem 2.8.6]{K17} the recurrence condition \textbf{R} (i),(ii) of \cite[Theorem 4.5.2]{K17} holds true with
$W(x,y)=V(x)+V(y),$ and  $\lambda(t)=\Phi^{-1}(t)$. We define
$$
\wh W(x,y)={\phi(V(x))+\phi(V(y))\over \phi(1)}, \quad \wh \lambda(t)={\phi(\Phi^{-1}(t))\over \phi(1)}={r(t)\over \phi(1)}
$$
and observe that the recurrence condition \textbf{R} (i),(ii)  of \cite[Theorem 4.5.2]{K17} with these functions and the same $B$  holds true, as well; see  the proof of \cite[Theorem 2.8.8]{K17}. In addition, by \cite[Proposition  2.8.5]{K17} applied to $U=V, V=\phi(V)$,  we have that
\be\label{Cesaro}
{1\over n}\E_x \sum_{k=1}^n \wh W(x, \bX_{kh})\leq \phi(V(x))+C_V+{1\over n}V(x), \quad n\geq 1.
\ee

Now we can obtain the required statement as a direct corollary of  \cite[Theorem 4.5.2]{K17}. Namely, take $q=\delta^{-1}$, $p=(1-\delta)^{-1}$, and denote
$$
d_{N,\gamma,p}(x,y)=d_{N,\gamma}(x,y)^{1/p}.
$$
This function is bounded by 1, hence \eqref{Cesaro} implies the additional assumption (4.5.8) in  \cite[Theorem 4.5.2]{K17}:
\be\label{Cesaro2}\ba
{1\over n}\E_x \sum_{k=1}^n d_{N,\gamma,p}(x,\bX_{kh}) \wh W(x, \bX_{kh})^{1/q}&
\leq \left({1\over n}\E_x \sum_{k=1}^n  \wh W(x, \bX_{kh})\right)^{1/q}\leq \left(\phi(V(x))+C_V+{1\over n}V(x)\right)^{1/q}.
\ea
\ee
Then  \cite[Theorem 4.5.2]{K17} yields that there exists unique IPM for $\bX$ and for some $c, C>0$
\be\label{rate_ind2}
d_{N,\gamma,p}(P_x^{nh}, \pi)\leq {C \over r(c n h)^{1/q}}\wh U(x), \quad x\in \CC, \quad n\geq 1
\ee
with
$$
\wh U(x)=\int_\CC d_{N,\gamma,p}(x,y) \wh W(x, y)^{1/q}\pi(\di y).
$$
By  Fatou's lemma and \eqref{Cesaro2},
$$
\wh U(x)\leq \left(\phi(V(x))+C_V\right)^{\delta}\leq  \phi(V(x))^\delta+C_V^{\delta}\leq C\phi(V(x))^\delta,
$$
in the last inequality we have used that $\inf_x\phi(V(x))>0$. By the Markov property and \eqref{lip} we have
$$
d_{N,\gamma,p}(P_x^{t+nh}, \pi)\leq C_h^{1/p} d_{N,\gamma,p}(P_x^{nh}, \pi),
$$
hence \eqref{rate_ind2} implies
\be\label{rate_ind3}
d_{N,\gamma,p}(P_x^{t}, \pi)\leq {C \over r(c t)^{\delta}}\phi(V(x))^\delta, \quad x\in \CC, \quad t\geq 0.
\ee
Since
$$
 d_\gamma(x,y)\leq d_{N,\gamma,p}(x,y),
$$
this completes the proof of \eqref{rate_ind}.

\section{Proofs of Theorems  \ref{t4} -- \ref{t6}. }\label{s6}

The proofs of all these three theorems will be based on the following auxiliary construction.  Denote
$$
\phi(v)=\mathrm{arctan}(|v|){v\over |v|}, \quad v\in \Re^n.
$$
Let $x\in \CC$ be arbitrary but fixed.  Denote by $X^x$ the (strong) solution to the SDDE \eqref{SDDE} with $\bX_0^x=x$, and consider a family of processes $Y^{\lambda, y}, \lambda\geq 0, y\in \CC$ defined  as the solutions to
\be\label{SDDE-Y-lambda}
\di Y^{\lambda,y}(t)=a(\bY_t^{\lambda,y})\, \di t +\sigma(\bY_t^{\lambda,y})\, \di W(t)-\lambda\phi(Y^{\lambda,y}(t)-X^x(t))\di t,\quad \bY_0^{\lambda,y}=y.
\ee
Note that each $Y^{\lambda,y}$ depends also on $x$, but we do not indicate this explicitly in order to keep the notation easy to read.
Denote
$$
\beta^{\lambda,y}(t)=\lambda\sigma(\bY^{\lambda,y}_t)^{-1} \phi(Y^{\lambda,y}(t)-X^x(t)),
$$
and observe that $|\beta^{\lambda,y}(t)|\leq C$. Then
the classical Girsanov theorem applies; e.g. \cite[Chapter 7]{LS}. Namely,
the family $$
\mathcal{E}^{\lambda,y}(t)=\exp\left(\int_0^t \beta^{\lambda,y}(s)\, \di W_s-{\frac12}\int_0^t|\beta^{\lambda,y}(s)|^2\, \di s\right), \quad y\in \CC
$$
satisfies $\E \mathcal{E}^{\lambda,y}(t)=1$, and the process
$$
W^{\lambda,y}(\tau)=W(\tau)-\int_0^\tau\beta^{\lambda,y}(s)\, ds, \quad \tau\in [0,t]
$$
is a Wiener process on $[0, t]$ w.r.t. the probability measure $\mathcal{E}^{\lambda,y}(t)d\P$. Equation  \eqref{SDDE-Y-lambda} is just \eqref{SDDE} with $\di W$ changed to $\di W^{\lambda,y}$; thus
\be\label{Girsanov}
\E f(\bY_t^{\lambda,y})\mathcal{E}^{\lambda,y}(t)=\E f(\bX_t^{y}).
\ee
In addition, since $\beta^{\lambda,y}$ is bounded by a constant, we have for each $p\geq 1$, $t>0$
\be\label{E_p}
\sup_{y}\E (\mathcal{E}_t^{\lambda,y})^p<\infty, \quad\sup_{y}\E (\mathcal{E}_t^{\lambda,y})^{-p}<\infty.
\ee

\subsection{Proof of Theorem \ref{t4}}\label{s61} Let $\mathbb{B}$ be a separable Banach space.
We will say that a family of $\mathbb{B}$-valued processes $Z^y(t), t\in [0,T], y\in \CC$ is \emph{$L_p$-Fr\'echet differentiable} at a given point $y\in \CC$, if for any $z\in \CC$ there exists a family $\nabla_z Z^y(t)\in \mathbb{B}, t\in [0, T], z\in \CC$ such that
\be\label{Gato_L_p}
\sup_{t\in [0, T], \|z\|\leq 1}\E\left\|{Z^{y+\eps z}(t)-Z^y(t)\over \eps}-\nabla_z Z^y(t) \right\|^p_{\mathbb{B}}\to 0, \quad \eps\to 0,
\ee
the mappings
\be\label{mapping}
\CC\ni z\mapsto \nabla_z Z^y(t)\in L_p(\Omega, \P, \mathbb{B}),\quad  t\in [0, T]
\ee
are linear, and
\be\label{der_norm_L_p}
\sup_{t\in [0,T]}\sup_{\|z\|\leq 1}\E\|\nabla_z Z^y(t)\|^p_{\mathbb{B}}<\infty.
\ee

We have the following.

\begin{lem}\label{l61} Let $\mathbf{C}^{(1)}$ hold true. Then for any $\lambda\geq 0, T>0, p\geq 1$
           the families
            $$
            \{Y^{\lambda,y}(t), t\in [0, T], y\in \CC\}, \quad \{\bY^{\lambda,y}_t, t\in [0, T], y\in \CC\}
$$
of processes taking values in $\Re^n$ and $\CC$ respectively are $L_p$-Fr\'echet differentiable at any $y\in \CC$. Moreover, the processes
 $U^{\lambda,y,z}(t)=\nabla_z Y^{\lambda,y}(t), t\in [0, T], z\in \CC$,  satisfy
                 \be\label{SDDE-U-1}\ba
\di U^{\lambda,y,z}(t)=\la \nabla a(\bY_t^{\lambda,y}), \bU^{\lambda,y,z}_t \ra\, \di t& +\la \nabla \sigma(\bY_t^{\lambda,y}), \bU^{\lambda,y,z}_t \ra\, \di W(t)
\\&-\lambda\la\nabla\phi(Y^{\lambda,y}(t)-X^x(t)),U^{\lambda,y,z}(t)\ra\, \di t,\quad \bU_0^{\lambda,y,z}=z,
\ea
\ee
and
\be\label{SDDE-U-2}
\nabla_z \bY_t^{\lambda,y}=\bU^{\lambda,y,z}_t , \quad t\in[0,T].
\ee
\end{lem}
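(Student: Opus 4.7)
First I would solve the linearized equation \eqref{SDDE-U-1} and record moment bounds. Under $\mathbf{C}^{(1)}$ the Fr\'echet derivatives $\nabla a$ and $\nabla \sigma$ are globally bounded linear forms on $\CC$, while $\phi(v)=\arctan(|v|)v/|v|$ is of class $C^1$ on $\Re^n$ with bounded $\nabla\phi$ (the only delicate point being $\nabla\phi(0)=I$, obtained from the expansion $\phi(v)=v+O(|v|^3)$). Hence \eqref{SDDE-U-1} is a \emph{linear} SDDE in $\bU^{\lambda,y,z}$ with bounded coefficients, and the standard It\^o--BDG--Gronwall machinery yields a unique strong solution together with $\sup_{t\in[0,T]}\E\|\bU^{\lambda,y,z}_t\|_\CC^p\le C(T,p,\lambda)\|z\|^p$ for every $p\ge 1$. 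Linearity of $z\mapsto\bU^{\lambda,y,z}_t$ is immediate from linearity of the equation in $(z,\bU)$.

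Next I would set $D^\eps(t)=\eps^{-1}(Y^{\lambda,y+\eps z}(t)-Y^{\lambda,y}(t))$ and subtract the two instances of \eqref{SDDE-Y-lambda}. The fundamental theorem of calculus along the straight segment between $\bY^{\lambda,y}_t$ and $\bY^{\lambda,y+\eps z}_t$ gives
\[
a(\bY^{\lambda,y+\eps z}_t)-a(\bY^{\lambda,y}_t)=\eps\la A^\eps(t),\bD^\eps_t\ra,\qquad A^\eps(t):=\int_0^1\nabla a\bigl(\bY^{\lambda,y}_t+\theta(\bY^{\lambda,y+\eps z}_t-\bY^{\lambda,y}_t)\bigr)\,\di\theta,
\]
and analogous identities with $\Sigma^\eps,\Phi^\eps$ for the diffusion and the $\phi$-term. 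Thus $\bD^\eps$ satisfies a linear SDDE of the same shape as \eqref{SDDE-U-1} but with the ``frozen'' coefficients $\nabla a(\bY_t^{\lambda,y})$, $\nabla \sigma(\bY_t^{\lambda,y})$, $\nabla\phi(Y^{\lambda,y}(t)-X^x(t))$ replaced by $A^\eps,\Sigma^\eps,\Phi^\eps$. Since $\mathbf{C}^{(1)}$ forces Lipschitz continuity of $a,\sigma$, a standard continuous-dependence estimate gives $\E\sup_{t\le T}\|\bY^{\lambda,y+\eps z}_t-\bY^{\lambda,y}_t\|_\CC^p\le C\eps^p\|z\|^p$, so the interpolated arguments stay within $O(\eps)$ of $\bY^{\lambda,y}_t$ uniformly in $\|z\|\le 1$; the \emph{uniform} continuity of $\nabla a,\nabla\sigma,\nabla\phi$ then forces $A^\eps\to\nabla a(\bY^{\lambda,y}_\cdot)$, and similarly for $\Sigma^\eps,\Phi^\eps$, in probability.

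Subtracting the SDDEs for $\bD^\eps$ and $\bU^{\lambda,y,z}$ yields a linear SDDE for $R^\eps:=D^\eps-U^{\lambda,y,z}$ whose drift and diffusion involve the frozen coefficients plus an inhomogeneous forcing proportional to $(A^\eps-\nabla a(\bY_\cdot^{\lambda,y}))\bU^{\lambda,y,z}$ together with the $\sigma,\phi$-counterparts. BDG and Gronwall, combined with the $L_p$-bound on $\bU^{\lambda,y,z}$ and H\"older's inequality to split the forcing into the vanishing coefficient perturbation times $\bU^{\lambda,y,z}$, give $\sup_{t\in[0,T],\,\|z\|\le 1}\E\sup_{u\in[0,t]}|R^\eps(u)|^p\to 0$. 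The segment-process statement \eqref{Gato_L_p} and the identity \eqref{SDDE-U-2} then follow since $\|\bD^\eps_t-\bU^{\lambda,y,z}_t\|_\CC$ equals $\sup_{u\in[(t-r)\vee 0,\,t]}|R^\eps(u)|$ (the contributions from negative times cancel exactly against the matching initial segments). The bound \eqref{der_norm_L_p} inherits from the first-paragraph moment bound on $\bU^{\lambda,y,z}$.

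The central obstacle is the \emph{uniformity in $\|z\|\le 1$} of the above convergence, since the interpolated coefficients depend on $z$ through the perturbed trajectory. This is defused by exploiting two features: the random argument at which $\nabla a$ etc.\ is evaluated deviates from $\bY^{\lambda,y}_t$ by at most $O(\eps\|z\|)\le O(\eps)$, so the moduli of \emph{uniform} continuity of $\nabla a,\nabla\sigma,\nabla\phi$ supplied by $\mathbf{C}^{(1)}$ provide a deterministic, $z$-free rate; and the resulting linear equation for $R^\eps$ scales cleanly in $\|z\|$ because the forcing is proportional to $\bU^{\lambda,y,z}$, whose $L_p$-norm is linear in $\|z\|$. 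Note that $\mathbf{H}_3$ plays no role at this stage; it will enter only when \eqref{Girsanov} is combined with \eqref{SDDE-U-2} to derive the representation \eqref{der_rep}.
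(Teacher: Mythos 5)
Your proposal is correct and follows essentially the same route as the paper: a Lipschitz/Gronwall moment bound on $Y^{\lambda,y+\eps z}-Y^{\lambda,y}$, a first-order expansion of the coefficients whose error is controlled uniformly in $\|z\|\le 1$ via the uniform continuity of $\nabla a,\nabla\sigma,\nabla\phi$, a Gronwall argument for the difference quotient minus $U^{\lambda,y,z}$, and an upgrade to the segment process via a maximal inequality. Your averaged-gradient (fundamental theorem of calculus) formulation is just an equivalent packaging of the paper's Taylor remainder terms $R^{\lambda,y,\eps z}_a,R^{\lambda,y,\eps z}_\sigma$.
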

\begin{proof} The scheme of the proof actually repeats the one from  the classical proof of $L_2$-differentiability w.r.t. to a parameter of a solution to an SDE; see \cite[Section 2.7]{GSkor72}. Thus we  just briefly outline the usual steps, and focus on particular (minor) difficulties which arise because the state space $\CC$ for the solution to \eqref{SDDE-Y-lambda} is infinite dimensional.

\emph{Step 1.} By assumption $\mathbf{C}^{(1)}$, $a, \sigma$ are (globally) Lipschitz continuous. Thus, applying first It\^o's formula, then the Burkholder-Davis-Gundy inequality, and finally the Gronwall lemma, one gets the bound
\be\label{step1}
\sup_{\|z\|\leq 1}\E \sup_{t\in [0, T]}\left|Y^{\lambda, y+\eps z}(t)-Y^{\lambda, y}(t)\right|^p\leq C\eps^p
\ee
for each fixed $p\geq 1, \lambda\geq 0$. The argument here is  the same as in Section \ref{s41}, thus  we omit the details.

\emph{Step 2.} Denote
$D^{\lambda, y, \eps z}(t)=Y^{\lambda, y+\eps z}(t)-Y^{\lambda, y}(t), t\geq 0$, and let $\bD^{\lambda, y, \eps z}_t, t\geq 0$ be the corresponding segment process. Since $a, \sigma$ are Fr\'echet differentiable and have bounded and uniformly continuous derivatives, it follows from \eqref{step1} that
$$
a(\bY^{\lambda, y+\eps z}_t)-a(\bY^{\lambda, y}_t)=\la \nabla a(\bY^{\lambda, y}_t), \bD^{\lambda, y, \eps z}_t\ra+R^{\lambda, y, \eps z}_a(t),
$$
$$
\sigma(\bY^{\lambda, y+\eps z}_t)-\sigma(\bY^{\lambda, y}_t)=\la \nabla \sigma(\bY^{\lambda, y}_t), \bD^{\lambda, y, \eps z}_t\ra+R^{\lambda, y, \eps z}_\sigma(t)
$$
with
\be\label{step2}
{1\over \eps^p}\sup_{t\in [0, T], \|z\|\leq 1}\Big(\E|R^{\lambda, y, \eps z}_a(t)|^p+\E|R^{\lambda, y, \eps z}_\sigma(t)|^p\Big)\to 0, \quad \eps\to 0.
\ee

\emph{Step 3.}  Using \eqref{step2},  we apply to the pair $\eps^{-1}D^{\lambda, y, \eps z},$ $U^{\lambda, y, z}$ the argument from Step 1, and obtain that
\be\label{step3}
\sup_{\|z\|\leq 1}\E \sup_{t\in [0, T]}\left|\eps^{-1}D^{\lambda, y, \eps z}(t)-U^{\lambda, y, z}(t)\right|^p\to 0, \quad \eps\to 0.
\ee
This is just the relation \eqref{Gato_L_p} for $Z^y(t)=Y^{\lambda, y}(t)$ with $\nabla_y Z^y(t)= U^{\lambda, y, z}(t)$. The linearity of the mapping \eqref{mapping} and the bound \eqref{der_norm_L_p} can be verified straightforwardly, since $U^{\lambda, y, z}$ solves the linear equation \eqref{SDDE-U-1} and $\nabla a, \nabla \sigma, \nabla \phi$, are bounded. This completes the proof of the required differentiability for the $\Re^d$-valued family
$
 \{Y^{\lambda,y}(t)\}$.

\emph{Step 4.} We have already proved \eqref{Gato_L_p} for $Z^y(t)=Y^{\lambda, y}(t)$ with $\nabla_y Z^y(t)= U^{\lambda, y, z}(t)$. These processes are given by \eqref{SDDE-Y-lambda}, \eqref{SDDE-U-1}, thus using  Doob's maximal inequality we can improve this relation and get
$$
\sup_{\|z\|\leq 1}\E\sup_{t\in [0,T]}\left\|{Z^{y+\eps z}(t)-Z^y(t)\over \eps}-\nabla_z Z^y(t) \right\|^p_{\Re^d}\to 0, \quad \eps\to 0.
$$
This yields  \eqref{Gato_L_p} for the $\CC$-valued processes $\wt Z^y(t)=\bY^{\lambda, y}_t$ with $\nabla_z \wt Z^y(t)= \bU^{\lambda, y, z}_t$.
The linearity of \eqref{mapping} for $\nabla_z \wt Z^y(t)$ follows trivially from the same property for $\nabla_z \wt Z^y(t),$ and  the bound \eqref{der_norm_L_p} for $\nabla_z \wt Z^y(t)$ follows from the same bound for $\nabla_z Z^y(t)$ and  Doob's maximal inequality. This completes
the proof of the required differentiability for the $\CC$-valued family
$
 \{\bY^{\lambda,y}_t\}$ and  identity \eqref{SDDE-U-2}.
\end{proof}

\begin{lem}\label{l62} Let $\mathbf{C}^{(1)}$ hold true. Then for any $\lambda\geq 0, T>0, p\geq 1$ the  following hold.
            \begin{enumerate}
 \item The family $\beta^{\lambda,y}(t), t\in [0, T], y\in \CC$ is $L_p$-Fr\'echet differentiable w.r.t. $y$, and
 $$\ba
 \Theta^{\lambda,y,z}(t):=\nabla_z\beta^{\lambda,y}(t)&=\lambda\Big(\nabla\sigma^{-1}\Big)(\bY^{\lambda,y}_t)\bU^{\lambda,y,z}_t\phi(Y^{\lambda,y}(t)-X^x(t))
 \\&+\lambda\sigma(\bY^{\lambda,y}_t)^{-1}\la\nabla\phi(Y^{\lambda,y}(t)-X^x(t)),U^{\lambda,y,z}(t)\ra.
 \ea
 $$
  \item The family $\ell^{\lambda,y}(t)=\log\mathcal{E}^{\lambda,y}(t), t\in [0, T], y\in \CC$ is $L_p$-Fr\'echet differentiable w.r.t. $y$, and
             $$
             \ell^{\lambda,y,z}(t):=\nabla_z \ell^{\lambda,y}(t)=\int_0^t \Theta^{\lambda,y,z}(s)\, \di W_s-\int_0^t\beta^{\lambda,y}(s)\cdot \Theta^{\lambda,y,z}(s)\, \di s.
             $$
             \item The family $\mathcal{E}^{\lambda,y}(t), t\in [0, T], y\in \CC$ is $L_p$-Fr\'echet differentiable w.r.t. $y$, and
             $$
             \nabla_z \mathcal{E}^{\lambda,y}(t)=\mathcal{E}^{\lambda,y}(t)\ell^{\lambda,y,z}(t).
             $$
           \end{enumerate}
\end{lem}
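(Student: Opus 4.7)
The plan is to establish parts (1)--(3) in sequence, each via the chain rule together with the $L_p$-Fréchet differentiability of $Y^{\lambda,y}$ and $\bY^{\lambda,y}$ already given by Lemma \ref{l61}. A key structural fact that I would exploit throughout is the uniform bound $|\beta^{\lambda,y}(t)|\le C$, noted just before the lemma: it holds because $\phi$ is bounded by $\pi/2$ and $\sigma^{-1}$ is bounded by $\mathbf{H}_3$. This in turn produces uniform-in-$y$ $L_q$-moments of $\mathcal{E}^{\lambda,y}(t)$ (and of its reciprocal) for every $q\ge1$, as recorded in \eqref{E_p}, and also uniform exponential moments of the martingale part of $\ell^{\lambda,y}(t)$, whose quadratic variation is bounded by $C^2 t$.

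For part (1), I would first fix a smooth selection of the right inverse, for instance the Moore--Penrose pseudoinverse $\sigma^{-1}=\sigma^\top(\sigma\sigma^\top)^\dagger$, which under $\mathbf{C}^{(1)}$ and $\mathbf{H}_3$ is $C^1$ in $x$ with bounded derivative, and then apply the chain rule to the product $\sigma(\bY_t^{\lambda,y})^{-1}\phi(Y^{\lambda,y}(t)-X^x(t))$. Combined with Lemma \ref{l61}, the uniform continuity of $\nabla\sigma$, $\nabla(\sigma^{-1})$, and $\nabla\phi$, and dominated convergence against the uniformly bounded outer factors, this yields $L_p$-convergence of the difference quotients to the stated $\Theta^{\lambda,y,z}(t)$; linearity in $z$ and the bound \eqref{der_norm_L_p} are then routine.

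For part (2), I would differentiate inside both integrals in the definition of $\ell^{\lambda,y}(t)$ and write the difference quotient as
$$
\eps^{-1}\bigl(\ell^{\lambda,y+\eps z}(t)-\ell^{\lambda,y}(t)\bigr)=\int_0^t \eps^{-1}\bigl(\beta^{\lambda,y+\eps z}-\beta^{\lambda,y}\bigr)(s)\,\di W_s-\frac12\int_0^t \eps^{-1}\bigl(|\beta^{\lambda,y+\eps z}|^2-|\beta^{\lambda,y}|^2\bigr)(s)\,\di s.
$$
Passage to the $L_p$-limit proceeds by the Burkholder--Davis--Gundy inequality applied to the stochastic integral, whose integrand converges in $L_p$ by part (1), and by Hölder for the Lebesgue integral after factorizing $|\beta^{\lambda,y+\eps z}|^2-|\beta^{\lambda,y}|^2=(\beta^{\lambda,y+\eps z}+\beta^{\lambda,y})\cdot(\beta^{\lambda,y+\eps z}-\beta^{\lambda,y})$ and using the uniform bound on $\beta^{\lambda,\cdot}$.

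For part (3), I would combine $\mathcal{E}^{\lambda,y}=e^{\ell^{\lambda,y}}$ with the elementary identity $e^{a+h}-e^a=e^a h+e^a(e^h-1-h)$ and the remainder bound $|e^h-1-h|\le\tfrac12 h^2 e^{|h|}$, writing $a=\ell^{\lambda,y}(t)$ and $h_\eps=\ell^{\lambda,y+\eps z}(t)-\ell^{\lambda,y}(t)$. Dividing by $\eps$, the principal term converges in $L_p$ to $\mathcal{E}^{\lambda,y}(t)\ell^{\lambda,y,z}(t)$ by part (2). The main obstacle I anticipate is controlling the remainder $\eps^{-1}\mathcal{E}^{\lambda,y}(t) h_\eps^2 e^{|h_\eps|}$: a double application of Hölder splits it into high moments of $\mathcal{E}^{\lambda,y}(t)$, high moments of $h_\eps$ (which, since part (2) may be invoked at arbitrary $p$, are of order $\eps^{2}$), and uniform-in-$\eps$ exponential moments of $h_\eps$. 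The freedom to pick arbitrarily large Hölder exponents, made possible precisely by the boundedness of $\beta^{\lambda,y}$, is what lets the $\eps^{-1}$ factor still produce a vanishing limit and closes the argument.
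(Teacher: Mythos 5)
Your proposal is correct, and for parts (1) and (2) it follows essentially the paper's route: chain rule plus Lemma \ref{l61} for $\Theta^{\lambda,y,z}$, then BDG and the boundedness of $\beta^{\lambda,\cdot}$ for $\ell^{\lambda,y,z}$ (the paper compresses both into one sentence). One genuinely useful point you add is the observation that the stated formula for $\Theta^{\lambda,y,z}$ involves $\nabla(\sigma^{-1})$, so one must fix a \emph{differentiable selection} of the right inverse; your choice of the Moore--Penrose inverse $\sigma^\top(\sigma\sigma^\top)^{-1}$, which is $C^1$ with bounded derivative under $\mathbf{C}^{(1)}$ and $\mathbf{H}_3$, settles this cleanly, whereas the paper leaves it implicit.

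For part (3) your argument differs mildly from the paper's. The paper first upgrades statement (2) to convergence \emph{in probability} of the difference quotients of $\mathcal{E}^{\lambda,y}$ to $\mathcal{E}^{\lambda,y}\ell^{\lambda,y,z}$, and then closes by uniform integrability: the elementary bound $|e^a-1|\le C|a|(1+e^a)$ gives
$$
\left|\frac{\mathcal{E}^{\lambda,y+\eps z}(t)-\mathcal{E}^{\lambda,y}(t)}{\eps}\right|\le C\left|\frac{h_\eps}{\eps}\right|\Big(\mathcal{E}^{\lambda,y+\eps z}(t)+\mathcal{E}^{\lambda,y}(t)\Big),
$$
so the difference quotients are bounded in every $L_q$ by \eqref{E_p} and statement (2), hence uniformly $L_{p}$-integrable. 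You instead do a second-order Taylor expansion $e^{a+h}-e^a=e^ah+e^a(e^h-1-h)$ and kill the remainder directly by a three-fold H\"older estimate, using that $\|h_\eps\|_{L_q}^2=O(\eps^2)$ and that $h_\eps$ has uniform exponential moments (which indeed hold, since its martingale part has quadratic variation bounded by $C^2t$ thanks to $|\beta^{\lambda,\cdot}|\le C$). Both arguments rest on exactly the same ingredients --- boundedness of $\beta$ and the moment bounds \eqref{E_p}; yours yields a quantitative rate $O(\eps)$ for the remainder, the paper's avoids exponential-moment bookkeeping by invoking uniform integrability. Either is acceptable.
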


\begin{proof}  Statement \emph{1}  follows from  Lemma \ref{l61} by the chain rule, and straightforwardly implies statement \emph{2}. To prove statement \emph{3}, we first observe that by  statement \emph{2}
\be\label{der_E}
{\mathcal{E}^{\lambda,y+\eps z}(t)-\mathcal{E}^{\lambda,y}(t)\over \eps}\to \mathcal{E}^{\lambda,y}(t)\ell^{\lambda,y,z}(t), \quad \eps\to 0
\ee
in probability uniformly in $t\in [0, T]$.  In addition, by the elementary inequality
$$
|e^a-1|\leq C|a|(1+e^a), \quad a\in \Re,
$$
we have
$$
\left|{\mathcal{E}^{\lambda,y+\eps z}(t)-\mathcal{E}^{\lambda,y}(t)\over \eps}\right|\leq C\left|{\log\mathcal{E}^{\lambda,y+\eps z}(t)-\log\mathcal{E}^{\lambda,y}(t)\over \eps}\right|\Big(\mathcal{E}^{\lambda,y+\eps z}(t)+\mathcal{E}^{\lambda,y}(t)\Big).
$$
By \eqref{Gato_L_p} for the family $\log \mathcal{E}^{\lambda,y}(t), t\in [0, T], y\in \CC$ and \eqref{E_p}, this yields that the left hand side term in \eqref{der_E} has uniformly bounded $L_p$-norm for every $p$, and thus is uniformly $L_{p'}$-integrable for any $p'<p$. This yields that \eqref{der_E} holds true in $L_p$ for any $p$. This proves \eqref{Gato_L_p} for the family $\mathcal{E}^{\lambda,y}(t), t\in [0, T], y\in \CC$; the proofs of linearity for $\eqref{mapping}$ and of the bound \eqref{der_norm_L_p} are  easy and omitted.
\end{proof}

Now it is easy to complete the proof of Theorem \ref{t4}.  By \eqref{Girsanov}, Lemma \ref{l61}, and statements 2,3 of Lemma \ref{l62} applied at the point $y=x$, we get
$$\ba
{1\over \eps}\Big(\E_{x+\eps z}f(\bX_t)&-\E_{x}f(\bX_t)\Big)={1\over \eps}\Big(\E f(\bY^{\lambda,x+\eps z}_t)\mathcal{E}^{\lambda,x+\eps z}(t)-
\E f(\bY^{\lambda,x}_t)\mathcal{E}^{\lambda,x}(t)\Big)
\\&\to \E \la \nabla f(\bY^{\lambda,x}_t), \bU^{\lambda, x,z}_t\ra \mathcal{E}^{\lambda,x}(t)
\\&+\E  f(\bY^{\lambda,x}_t)\mathcal{E}^{\lambda,x}(t)\left(\int_0^t \Theta^{\lambda,x,z}(s)\, \di W_s+\int_0^t\beta^{\lambda,x}(s)\cdot \Theta^{\lambda,x,z}(s)\, \di s\right),\quad \eps\to 0
\ea
$$
uniformly in $t\in [0,T], \|z\|\leq 1$. Note that $Y^{\lambda,x}=X^x$ and thus $\beta^{\lambda,x}(t)\equiv 0$, $\mathcal{E}^{\lambda,x}(t)\equiv 1$. In addition, $\phi(0)=0, \nabla \phi(0)=I_{\Re^n}$ and thus
$$
\Theta^{\lambda,x,z}(t)=\lambda \sigma(\bX^{x}_t)^{-1}U^{\lambda,x,z}(t).
$$
Finally, since $\nabla \phi(0)=I_{\Re^n}$
equation \eqref{SDDE-U-1} for $U^{\lambda,x,z}$ coincides with equation \eqref{U}. That is, $U^{\lambda,x,z}=U^{\lambda,z}$, which completes the proof of Theorem \ref{t4}.

  \subsection{Proof of Theorem \ref{t5}}
First, we fix $h>r$ such that \eqref{lyap} holds true; it is an assumption of Theorem \ref{t23} (and thus of Theorem \ref{t5}) that such $h$ exists. In what follows, the constants may depend on $h$ but we do not indicated this in the notation.

Next, we note that, for any $p\geq 1$ and $Q>0$, one can fix $\lambda$ large enough such that
\be\label{conj}
\E_x\|\bU_t^{\lambda,z}\|^p\leq Ce^{-pQ  t}\|z\|^p, \quad t\geq 0,
\ee
This follows by It\^o's formula and Lemma \ref{lem2} applied to the family of processes $V^{\lambda,v}(t)=|U^{\lambda, z}(t)|^2, \lambda>0, v=|z|^2\in \CC^+_{real}$.
In the sequel we use this inequality for the particular value  $p=(1-\delta)^{-1}$ and a fixed $Q$. Note that by \eqref{conj} we have
\be\label{ff1}
\Big|\E_x \Big\la \nabla f(\bX_t), \bU^{\lambda,z}_t\Big\ra\Big|\leq  C e^{-Q t}\sup_{y\in \CC}\|\nabla f(y)\| \|z\|.
\ee

Next, we note that by \eqref{conj}, $\textbf{H}_3$, and the Burkholder-Davis-Gundy inequality, for any $t_1\leq t_2$
\be\label{conj-int}
\E_x\left|\int_{t_1}^{t_2}\sigma(\bX_s)^{-1}U^{\lambda,z}(s)\,dW(s)\right|^p\leq Ce^{-pQ  t_1}\|z\|^p.
\ee
Since
$$
\E_x \left(f(\bX_t)\int_{t_0}^t\sigma(\bX_s)^{-1}U^{\lambda,z}(s)\,dW(s)\right)=\E_x \left((f(\bX_t)-f(0))\int_{t_0}^t\sigma(\bX_s)^{-1}U^{\lambda,z}(s)\,dW(s)\right)
$$
and
\be\label{H_gamma_tobound}
\sup_x\|f(x)-f(0)\|\leq \|f\|_{H_\gamma},
\ee
 we have
\be\label{ff2}
\left|\lambda \E_x \left(f(\bX_t)\int_{t_0}^t\sigma(\bX_s)^{-1}U^{\lambda,z}(s)\,dW(s)\right)\right|
\leq Ce^{-Qt_0}\|f\|_{H_\gamma}\|z\|, \quad t_0\leq t.
\ee
 On the other hand, by the Markov property,
$$
\E_x \left(f(\bX_t)\int_0^{t_0}\sigma(\bX_s)^{-1}U^{\lambda,z}(s)\,dW(s)\right)=\E_x \left(P_{t-t_0}f(\bX_{t_0})\int_0^{t_0}\sigma(\bX_s)^{-1}U^{\lambda,z}(s)\,dW(s)\right).
$$
 Denote
$$
\ov f=\int_\CC f(y)\, \pi (\di y).
$$
By \eqref{KR_thm} and  \eqref{rate_ind},
$$
|P_sf(x)-\ov f|\leq C{1\over r(cs)^\delta} \phi(V(x))^\delta\|f\|_{H_\gamma}.
$$
Clearly,
$$
\E_x \left(\ov f \int_0^{t_0}\sigma(\bX_s)^{-1}U^{\lambda,z}(s)\,dW(s)\right)=0,
$$
hence by \eqref{conj-int} with $t_1=0, t_2=t_0$ and H\"older's inequality applied to $p=(1-\delta)^{-1}, q=\delta^{-1}$, we have
\be\label{ff3}
\left|\E_x \left(f(\bX_t)\int_0^{t_0}\sigma(\bX_s)^{-1}U^{\lambda,z}(s)\,dW(s)\right)\right|\leq
 C{1\over r(c(t-t_0))^\delta} \Big(\E_x \phi(V(\bX_{t_0}))\Big)^\delta\|f\|_{H_\gamma}\|z\|.
\ee
By Jensen's inequality, $\E_x \phi(V(\bX_{t_0}))\leq \phi(\E_x V(\bX_{t_0}))$. On the other hand, if $t_0=kh, k\in \mathbb{N}$, then it follows from  \eqref{lyap} that
$$
\E_x V(\bX_{t_0})\leq V(x)+k C_V.
$$
Since $\phi$ is concave and non-negative, we have
\be\label{subad}
\phi(u+v)\leq \phi(u)+\phi(v)-\phi(0)\leq \phi(u)+\phi(v).
\ee
Hence
\be\label{ff4}
\Big(\E_x \phi(V(\bX_{t_0}))\Big)^\delta\leq C(t_0+\phi(V(x)))^\delta, \quad t_0\in h\mathbb{N}.
\ee

Now we can finish the proof. Since the function $\phi(v)$ is sub-linear (see \eqref{subad}), the function $r(t)$ is  sub-exponential. In particular,
we can fix $c_Q\in (0, c/2)$ small enough such that
$$
 \log r(c_Q t)\leq {Qt\over 2}, \quad t\geq 0.
$$
Take
$$
t_0=h\Big[h^{-1}Q^{-1}\log r(c_Qt)\Big]\in h\mathbb{N},
$$
then $t-t_0\geq {t/2}$ and
$$
r(c(t-t_0))\geq r(c_Q t)
$$
because $r(\cdot)$  is increasing, and $c_Q<c/2$. Combining the representation \eqref{der_rep} and \eqref{ff1} -- \eqref{ff4}, we get
$$
\ba
|\nabla_z \E_x f(\bX_t)|&\leq Ce^{-Qt}\sup_{y\in \CC}\|\nabla f(y)\| \|z\|
\\&+{C\over r(c_Q t)}\|f\|_{H_\gamma} \|z\|+{C\over r(c_Q t)}(Q^{-1}\log r(c_Qt)
+\phi(V(x)))^\delta\|f\|_{H_\gamma} \|z\|,\quad x,y\in \CC,
\ea
$$
which after simple re-arrangement gives \eqref{der_rate_ind}.

  \subsection{Proof of Theorem \ref{t6}.}
  For $k>1$, the argument remains principally the same as the one developed for $k=1$ in the  two previous sections, with just technical complications which
  makes the proof more cumbersome. Thus we just outline the main steps of the proof, paying particular attention to one new circumstance; see \emph{Step 5} below.  Everywhere below we assume $\mathbf{C}^{(k)}$ to hold for some $k>1$.

 \emph{ Step 1.} By iteration of the argument in the proof of Lemma \ref{l61}, we obtain the following: the family  $Y^{\lambda,y}(t), t\in [0, T], y\in \CC$ is $k$ times $L_p$-Fr\'echet differentiable w.r.t. $y$, and the corresponding direction-wise derivatives $U^{\lambda,y,z_1, \dots,z_k}(t)=\nabla_{z_1}\dots\nabla_{z_k} Y^{\lambda,y}(t)$ satisfy SDDEs of the form
                 \be\label{SDDE-U-k}\ba
\di U^{\lambda,y,z_1, \dots, z_k}(t)&=\la \nabla a(\bY_t^{\lambda,y}), \bU^{\lambda,y,z_1, \dots, z_k}_t \ra\, \di t +\la \nabla \sigma(\bY_t^{\lambda,y}), \bU^{\lambda,y,z_1, \dots, z_k}_t \ra\, \di W(t)
\\&-\lambda\la\nabla\phi(Y^{\lambda,y}(t)-X^x(t)),U^{\lambda,y,z_1, \dots, z_k}(t)\ra\, \di t
\\&+D^{\lambda,y,z_1, \dots, z_k}(t)\, dt+S^{\lambda,y,z_1, \dots, z_k}(t)\,dW(t),\quad \bU_0^{\lambda,y,z_1, \dots, z_k}=0.
\ea
\ee
The terms  $D^{\lambda,y,z_1, \dots, z_k}(t), S^{\lambda,y,z_1, \dots, z_k}(t)$ can be represented as sums of various $l$-linear forms, which are expressed in terms of $\nabla^i a(\bY_t^{\lambda,y}),  \nabla^i \sigma(\bY_t^{\lambda,y}), \nabla^i\phi(Y^{\lambda,y}(t)-X^x(t)), i=1, \dots, k$ (and thus are bounded). The arguments in each of those  $l$-linear forms have the generic form
$$
\bU^{\lambda,y,z_{I_1}}_t, \dots, \bU^{\lambda,y,z_{I_l}}_t,
$$
where $I_1, \dots, I_l$ is a disjoint partition of $\{1, \dots, k\}$, and we use the notation  $I=\{i_1, \dots, i_j\}$
$$
\bU^{\lambda,y,z_{I_1}}_t=\bU^{\lambda,y,z_{i_1}, \dots, z_{i_j}}_t, \quad I=\{i_1, \dots, i_j\}.
$$

 \emph{ Step 2.} We have for $\lambda$ large enough  \be\label{conj_k}
\E\|\bU_t^{\lambda,x,z_1, \dots, z_k}\|^p\leq Ce^{-pQ  t}\|z_1\|^p\dots\|z_k\|^p, \quad t\geq 0.
\ee
To see this, first recall that  $Y^{\lambda, x}=X^x$, $\nabla\phi(0)=I_{\Re^n}.$ Then \eqref{SDDE-U-k} for $y=x$ transforms to
                \be\label{SDDE-U-k-x}\ba
\di U^{\lambda,x,z_1, \dots, z_k}(t)&=\la \nabla a(\bX_t^{x}), \bU^{\lambda,x,z_1, \dots, z_k}_t \ra\, \di t +\la \nabla \sigma(\bX_t^{x}), \bU^{\lambda,x,z_1, \dots, z_k}_t \ra\, \di W(t)
\\&-\lambda U^{\lambda,x,z_1, \dots, z_k}(t)\, \di t
+D^{\lambda,x,z_1, \dots, z_k}(t)\, dt+S^{\lambda,x,z_1, \dots, z_k}(t)\,dW(t),\quad \bU_0^{\lambda,y,z_1, \dots, z_k}=0,
\ea
\ee
Now {the proof} can be made inductively: assuming \eqref{conj_k} holds true for $1, \dots, k-1$, we have
$$
\E|D^{\lambda,x,z_1, \dots, z_k}(t)|^p+\E|S^{\lambda,x,z_1, \dots, z_k}(t)|^p\leq Ce^{-pQ  t}\|z_1\|^p\dots\|z_k\|^p, \quad t\geq 0,
$$
which yields \eqref{conj_k} for $k$ by Lemma \ref{lem3}.

\emph{Step 3.} Similarly to the proof of Lemma \ref{l62} we get that the following families are $k$ times $L_p$-Fr\'echet differentiable w.r.t. $y$:
\begin{itemize}
  \item $\beta^{\lambda,y}(t), t\in [0, T], y\in \CC$;
  \item $\ell^{\lambda,y}(t):=\log \mathcal{E}^{\lambda,y}(t), t\in [0, T], y\in \CC$;
  \item $\mathcal{E}^{\lambda,y}(t_1;t_2)=\mathcal{E}^{\lambda,y}(t_1)^{-1}\mathcal{E}^{\lambda,y}(t_2), t_1,t_2\in [0, T], y\in \CC$.
\end{itemize}
The corresponding direction-wise derivatives will be defined by
$$
 \Theta^{\lambda,y,z_1, \dots, z_k}(t)=\nabla_{z_1}\dots\nabla_{z_k}\beta^{\lambda,y}(t), \quad \ell^{\lambda,y,z_1, \dots, z_k}(t)=\nabla_{z_1}\dots\nabla_{z_k}\ell^{\lambda,y}(t),
 $$
 $$
 \mathcal{E}^{\lambda,y,z_1, \dots, z_k}(t_1;t_2)=\nabla_{z_1}\dots\nabla_{z_k}\mathcal{E}^{y,\lambda}(t_1;t_2).
 $$
 We have straightforwardly
 $$\ba
             \ell^{\lambda,y,z_1, \dots, z_k}(t)=\int_0^t \Theta^{\lambda,y,z_1, \dots, z_k}(s)\, \di W_s
            +\sum_{i=1, \dots, k}\int_0^t\Theta^{\lambda,y, z_j}(s)\cdot \Theta^{\lambda,y,z_1, \dots,z_{j-1}, z_{j+1}, \dots z_k}(s)\, \di s.
             \ea
             $$
 \emph{Step 4.} The derivative $\Theta^{\lambda,x,z_1, \dots, z_k}(t)$ can be expressed as a sum of bounded $l$-linear forms $(l=1, \dots, k)$ applied to
$$
\bU^{\lambda,x,z_1', \dots, z_j'}_t, \quad j=1, \dots, k, \quad z_1', \dots, z_j'\in\{z_1, \dots, z_k\}.
$$
Then by \eqref{conj_k} we have that for  $\lambda$ large enough
 \be\label{conj_k_Theta}
\E|\Theta^{\lambda,x,z_1, \dots, z_k}(t)|^p\leq Ce^{-pQ  t}\|z_1\|^p\dots\|z_k\|^p, \quad t\geq 0.
\ee
This implies
\be\label{conj_k_ell_2}
\E|\ell^{\lambda,x,z_1, \dots, z_k}(t_2)-\ell^{\lambda,x,z_1, \dots, z_k}(t_1)|^p\leq C e^{-pQ  t_1} \|z_1\|^p\dots\|z_k\|^p, \quad t_2\geq t_1\geq 0.
\ee
Recall that $\mathcal{E}^{\lambda,x}(t_1;t_2)=1,$ hence the derivative $\mathcal{E}^{\lambda,y,z_1, \dots, z_k}(t_1;t_2)$ is a polynomial of
$$
\ell^{\lambda,x,z_1', \dots, z_j'}(t_2)-\ell^{\lambda,x,z_1', \dots, z_j'}(t_1), \quad j=1, \dots, k, \quad z_1', \dots, z_j'\in\{z_1, \dots, z_k\}.
$$
Therefore for  $\lambda$ large enough
\be\label{conj_k_E}
\E|\mathcal{E}^{\lambda,x,z_1, \dots, z_k}(t_1;t_2)|^p\leq C e^{-pQ  t_1} \|z_1\|^p\dots\|z_k\|^p,  \quad t_2\geq t_1\geq 0.
\ee

\emph{Step 5.} By \eqref{Girsanov}, we have
\be\label{int_rep_der_k}
\ba
\nabla_{z_1}&\dots\nabla_{z_k}\E_x f(\bX_t)=\E \nabla_{z_1}\dots\nabla_{z_k}\Big(f(\bY^{\lambda,y}_t)\mathcal{E}^{\lambda,y}(t)\Big)|_{y=x}
\\&=\sum_{I\bigcup J=\{1, \dots,k\}, I\bigcap J=\varnothing}\E \nabla^{I}_{z_I}\Big(f(\bY^{\lambda,y}_t)\Big)|_{y=x}\mathcal{E}^{\lambda,x, z_J}(t)
\\&=\sum_{I\bigcup J=\{1, \dots,k\}, I\bigcap J=\varnothing, I\not=\varnothing}\E \nabla^{I}_{z_I}\Big(f(\bY^{\lambda,y}_t)\Big)|_{y=x}\mathcal{E}^{\lambda,x, z_J}(t)+\E f(\bX^{x}_t)\mathcal{E}^{\lambda,x, z_1, \dots, z_k}(t)
\ea
\ee
where the following notation is used:
\begin{itemize}
  \item for $I=\{i_1, \dots, i_m\}$,
  $$
  \nabla^{I}_{z_I}=\nabla_{z_{i_1}}\dots\nabla_{z_{i_m}};
  $$
  \item for $J=\{j_1, \dots, j_r\}$,
  $$
  \mathcal{E}^{\lambda,x, z_J}(t)=\mathcal{E}^{\lambda,x, z_J}(0,t), \quad \mathcal{E}^{\lambda,x, z_J}(t_1,t_2)=\mathcal{E}^{\lambda,x, z_{j_1}, \dots, z_{j_r}}(t_1,t_2).
  $$
  \end{itemize}
By \eqref{conj_k} and \eqref{conj_k_E} with $t_2=t, t_1=0$, we have
\be\label{est1}
\left|\sum_{I\bigcup J=\{1, \dots,k\}, I\bigcap J=\varnothing, I\not=\varnothing}\E \nabla^{I}_{z_I}\Big(f(\bY^{\lambda,y}_t)\Big)|_{y=x}\mathcal{E}^{\lambda,x, z_J}(t)\right|\leq C\|f\|_{(k)}e^{-Q  t}\|z_1\|\dots\|z_k\|.
\ee
That is, the first term (the sum) on the right hand side of \eqref{int_rep_der_k} admits  an estimate completely analogous to \eqref{ff1}.

The second term on the right hand side of \eqref{int_rep_der_k} is analogous to the second term in \eqref{der_rep}. A slight new difficulty which appears in the case $k>1$ is that now this term can not be simply written by means of a stochastic integral: $\mathcal{E}^{\lambda,x, z_1, \dots, z_k}(t)$ is actually a mixture of various multiple stochastic integrals.  This difficulty can be resolved by the following trick, which  makes it possible to estimate this term without a study of its inner structure. We have for arbitrary $t_0\leq t$
\be\label{E_dec}\ba
\mathcal{E}^{\lambda,x, z_1, \dots, z_k}(t)&=\nabla_{z_1}\dots\nabla_{z_k}\Big(\mathcal{E}^{\lambda,y}(0; t_0)\mathcal{E}^{\lambda,y}( t_0;t)\Big)|_{y=x}=\sum_{I\bigcup J=\{1, \dots,k\}, I\bigcap J=\varnothing}\mathcal{E}^{\lambda,x, z_I}(0;t_0)\mathcal{E}^{\lambda,x, z_J}(t_0;t).
\ea
\ee
By \eqref{H_gamma_tobound} and  \eqref{conj_k_E},
\be\label{est2}
\left| \E f(\bX^x_t)\sum_{I\bigcup J=\{1, \dots,k\}, I\bigcap J=\varnothing, J\not=\varnothing} \mathcal{E}^{\lambda,x, z_I}(0;t_0)\mathcal{E}^{\lambda,x, z_J}(t_0;t)\right|\leq  C\|f\|_{H_\gamma}e^{-Q  t_0}\|z_1\|\dots\|z_k\|,
\ee
which is a straightforward analogue to \eqref{ff2}. The term with $J=\varnothing$ equals
$$
\E f(\bX^{x}_t)  \mathcal{E}^{\lambda,x,z_1, \dots, z_k}(t_0)=\E P_{t-t_0}f(\bX_{t_0})\mathcal{E}^{\lambda,x,z_1, \dots, z_k}(t_0).
$$
Note that
$$
\E\mathcal{E}^{\lambda,x,z_1, \dots, z_k}(t_0)=\nabla_{z_1}\dots\nabla_{z_k}\E\mathcal{E}^{\lambda,y}(t_0)|_{y=x}=\nabla_{z_1}\dots\nabla_{z_k}1=0.
$$
Repeating literally the calculations used in the proof of \eqref{ff3} and using \eqref{ff4}, we get
\be\label{est3}
\left|\E f(\bX_t^x) \mathcal{E}^{\lambda,x,z_1, \dots, z_k}(t_0)\right|\leq
 C{1\over r(c(t-t_0))^\delta} \Big(t_0+\phi(V(x))\Big)^\delta\|f\|_{H_\gamma}\|z\|.
\ee
Using \eqref{est1}, \eqref{est2}, \eqref{est3}, and repeating  the optimization in $t_0$ procedure from the last part of the proof of Theorem \ref{t5}, we complete the proof of the theorem.
\appendix
\section{The Kullback-Leibler distance and related bounds}
For a pair of probability measures $\mu\ll \nu$ on a measurable space $(X,\mathcal{X})$  \emph{the  Kullback--Leibler $($KL--$)$ divergence of $\mu$ from $\nu$}  is defined by
$$
D_{KL}(\mu\|\nu):=\int_X \log {\frac{d \mu}{d \nu}}\, d \mu=\int_X \frac{d \mu}{ d \nu}\log\Bigl(\frac{d \mu}{d \nu}\Bigr)\, d \nu.
$$
 The  $KL$--divergence is known to be a stronger measure of difference between probability distributions than the total variation distance; in particular, the following \emph{Pinsker inequality} hold true, e.g. \cite[Lemma~2.5.(i)]{Ts}:
 \be\label{otzenka2}
d_{TV}(\mu,\nu)\le \sqrt{{\frac12} D_{KL}(\mu\|\nu)}.
\ee
In addition, the $KL$--divergence yields the following lower bound, see \cite[Lemma A.1]{BKS18}: for any $N>1$ and any set $A\in\mathcal{X}$,
\begin{equation}
\label{diffbound}
\nu(A)\ge\frac1N \mu(A)-\frac{D_{KL}(\mu\|\nu)+\log 2}{N\log N}.
\end{equation}
Next, let  $\xi$ be an  $m$-dimensional   It\^o process with $\xi_0=0$ and
\begin{equation}\label{Ito_process}
\di \xi_t=\beta_td t+\di\, W_t, \quad t\ge0,
\end{equation}
where $W$ is a Wiener process in $\Re^m$, and $(\beta_t)_{t\ge 0}$ is  a progressively measurable  process.
The following bound is available for the $KL$--divergence  of the law $\mu_\xi$  of the process $\xi$ on $C([0,\infty), \Re^m)$ w.r.t. of the law $\mu_W$ of $W$ (the Wiener measure).

\begin{thm}\label{tKLGirsanov}\cite[Theorem A.2]{BKS18}
 $$
 D_{KL}(\mu_\xi\|\mu_W)\leq {\frac12}\E\int_0^\infty |\beta_t|^2\, d t.
 $$
 \end{thm}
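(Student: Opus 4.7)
The plan is to combine Girsanov's theorem with Jensen's inequality. Without loss of generality, assume $\E\int_0^\infty|\beta_t|^2\,dt<\infty$. I will first establish, for each finite horizon $T>0$, the analogous bound $D_{KL}(\mu_\xi^T\|\mu_W^T)\le\tfrac12\E\int_0^T|\beta_t|^2\,dt$ for the laws restricted to $C([0,T],\Re^m)$, and then pass to $T\to\infty$ by monotone convergence of both sides (using that $D_{KL}(\mu_\xi\|\mu_W)=\sup_T D_{KL}(\mu_\xi^T\|\mu_W^T)$ by the tower/martingale property of conditional expectations of densities).

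To establish the bound on $[0,T]$, assume first that Novikov's condition holds for $\beta$ on $[0,T]$, and define
$$
Z_T=\exp\left(-\int_0^T\beta_s\cdot \di W_s-{\tfrac12}\int_0^T|\beta_s|^2\,\di s\right),\quad \di\Q=Z_T\,\di\P.
$$
By Girsanov, under $\Q$ the process $\xi_t=W_t+\int_0^t\beta_s\,\di s$ is a Brownian motion on $[0,T]$, so the law of $\xi$ under $\Q$ equals $\mu_W^T$. Consequently
$$
\mu_W^T(A)=\E_\P\bigl[Z_T\,1_{\{\xi\in A\}}\bigr]=\int_A \E_\P[Z_T\mid\xi]\,\di\mu_\xi^T,
$$
so $\di\mu_W^T/\di\mu_\xi^T=\E_\P[Z_T\mid\xi]$, which is strictly positive (hence $\mu_\xi^T\ll\mu_W^T$), and
$$
D_{KL}(\mu_\xi^T\|\mu_W^T)=-\E_\P\log\E_\P[Z_T\mid\xi]\le\E_\P[-\log Z_T]=\E\left[\int_0^T\beta_s\cdot \di W_s+{\tfrac12}\int_0^T|\beta_s|^2\,\di s\right]={\tfrac12}\E\int_0^T|\beta_s|^2\,\di s,
$$
where the inequality is Jensen applied to the convex function $-\log$ and the final equality uses that $\int_0^T\beta\,\di W$ is a true martingale under Novikov.

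For general $\beta$ with $\E\int_0^T|\beta_s|^2\,\di s<\infty$, I would use the standard localization $\tau_N=\inf\{t:\int_0^t|\beta_s|^2\,\di s\ge N\}\wedge T$ and $\beta^N_t=\beta_t 1_{t\le\tau_N}$; Novikov is automatic for $\beta^N$, so the previous step gives $D_{KL}(\mu_{\xi^N}^T\|\mu_W^T)\le\tfrac12\E\int_0^T|\beta^N_s|^2\,\di s\le\tfrac12\E\int_0^T|\beta_s|^2\,\di s$. Since $\tau_N\uparrow T$ a.s. and $\xi^N\to\xi$ uniformly on $[0,T]$, one has $\mu_{\xi^N}^T\to\mu_\xi^T$ weakly; the lower semi-continuity of $D_{KL}(\cdot\|\mu_W^T)$ in the first argument then yields the desired inequality on $[0,T]$. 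The step I expect to be the most delicate is precisely this passage to the limit: not the weak convergence itself, but the verification that the absolute continuity persists (namely $\mu_\xi^T\ll\mu_W^T$) so that $D_{KL}$ remains finite and equals the appropriate limit; this is handled by the uniform bound on $\E[Z_T^N(-\log Z_T^N)]$ and the De la Vallée-Poussin/uniform integrability argument that underlies l.s.c.\ of the KL divergence.
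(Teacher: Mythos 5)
Your argument is correct. Note that the paper itself gives no proof of this statement --- it is quoted verbatim from \cite[Theorem A.2]{BKS18} --- and your route (Girsanov change of measure so that $\xi$ becomes a Brownian motion under $\Q$, the identification $\di\mu_W^T/\di\mu_\xi^T=\E_\P[Z_T\mid\xi]$ followed by conditional Jensen, i.e.\ the data-processing inequality $D_{KL}(\mu_\xi^T\|\mu_W^T)\le D_{KL}(\P\|\Q)=\tfrac12\E\int_0^T|\beta_s|^2\,\di s$, then localization and lower semicontinuity of relative entropy) is exactly the standard proof of that cited result; the passage $T\to\infty$ via monotonicity of $D_{KL}$ along the cylinder $\sigma$-fields is also sound.
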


\section{Auxiliary tail- and $L_p$-estimates}\label{sB}

The following lemma was suggested by R.~Schilling.

\begin{lem}\label{lem1} Let $V(t)\geq 0$ be an It\^o process with
$$
\di V(t)=\eta(t)\, \di t+\, \di M(t),
$$
where $M$ is a continuous local martingale with quadratic variation
$$
\langle M\rangle(t)=\int_0^t m(s)\, \di s, \quad t\geq 0.
$$
Let for some constants $A\ge 0, B >0,\lambda >0$ and a random variable $\tau\geq 0$
$$
\eta(t)\leq -\lambda U(t)+A, \quad m(t)\leq B, \quad t\leq \tau.
$$
Assume also that $\tau\leq T$ for some constant $T>0$.

Then for every $\delta \in (0,1/2)$ there exist $C_{1}$, $C_{2}>0,$ which depend only on $\delta$ and $T$,  such that
$$
\P\left(\sup_{t\leq \tau}\Big(V(t)- e^{-\lambda t}V(0)\Big)\geq A\lambda^{-1}+B^{1/2} \lambda^{-\delta}R\right)\leq C_1 e^{-C_2  R^2}, \quad R\geq 0.
$$
\end{lem}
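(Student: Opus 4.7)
The plan is to eliminate the drift by an integrating factor and then control the resulting exponentially weighted martingale by a Bernstein-type inequality combined with a dyadic peeling argument at the natural time scale $1/\lambda$. Concretely, applying It\^o's formula to $e^{\lambda t} V(t)$ on $[0,\tau]$ and using $\eta(t) + \lambda V(t) \leq A$ gives
\[
e^{\lambda t} V(t) \leq V(0) + \frac{A}{\lambda}\bigl(e^{\lambda t} - 1\bigr) + N(t), \qquad N(t) := \int_0^t e^{\lambda s}\, \di M(s),
\]
so that after dividing by $e^{\lambda t}$ the lemma reduces to showing
\[
\P\Bigl(\sup_{t \leq \tau} e^{-\lambda t} N(t) \geq B^{1/2} \lambda^{-\delta} R\Bigr) \leq C_1 e^{-C_2 R^2},
\]
with the quadratic variation estimate $\langle N\rangle(t) \leq B(e^{2\lambda t}-1)/(2\lambda) \leq Be^{2\lambda t}/(2\lambda)$ valid on $[0,\tau]$.

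For the principal regime $\lambda T \geq 1$ I would peel on the grid $t_k = k/\lambda$, $k = 0,\dots,K := \lceil \lambda T\rceil$. Since $e^{-\lambda t} \leq e^{-(k-1)}$ on $[t_{k-1},t_k]$,
\[
\Bigl\{\sup_{t \leq \tau} e^{-\lambda t} N(t) \geq a\Bigr\} \subset \bigcup_{k=1}^K \Bigl\{\sup_{t \leq \tau \wedge t_k} N(t) \geq a e^{k-1}\Bigr\},
\]
and on each piece the standard exponential (Bernstein) estimate for continuous martingales with $\langle N\rangle(\tau\wedge t_k) \leq Be^{2k}/(2\lambda)$ yields the $k$-independent bound $\exp(-a^2\lambda/(Be^2))$. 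A union bound and the substitution $a = B^{1/2}\lambda^{-\delta} R$ give the provisional estimate $(\lambda T+1)\exp(-\lambda^{1-2\delta} R^2/e^2)$. In the complementary regime $\lambda T < 1$ no peeling is needed: $\sup_{t\leq\tau} e^{-\lambda t} N(t) \leq \sup_{t\leq\tau} N(t)$ while $\langle N\rangle(\tau) \leq BTe^2$, so a single Bernstein application together with $\lambda^{-2\delta}\geq T^{2\delta}$ directly yields $\exp(-c(T,\delta) R^2)$.

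\emph{The main obstacle} is the uniformity in $\lambda > 0$. In the peeling regime one must absorb the polynomial prefactor $\lambda T+1$ from the union bound into the Gaussian tail, and here the assumption $\delta < 1/2$ is essential: the exponent $1-2\delta$ is strictly positive, which makes $\log(\lambda T+1)/\lambda^{1-2\delta}$ bounded on $[1/T,\infty)$ by a constant depending only on $T$ and $\delta$. Splitting the exponent $\lambda^{1-2\delta} R^2/e^2$ into two halves, one half dominates the prefactor once $R$ exceeds a threshold $R_\ast(T,\delta)$, while for $R \leq R_\ast$ the trivial bound $1$ is absorbed by enlarging $C_1$. Merging this with the $\lambda T < 1$ case gives the required constants $C_1, C_2$ depending only on $T$ and $\delta$.
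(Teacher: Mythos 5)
Your proof is correct, and it takes a genuinely different route from the paper. Both arguments start from the same reduction (integrating factor / variation-of-constants, isolating the term $A\lambda^{-1}$ and the exponentially weighted martingale), but they diverge on how the stochastic term is controlled. The paper applies the Dambis--Dubins--Schwarz theorem twice to replace $\int_0^t e^{-\lambda(t-s)}\,\di M(s)$ by $B^{1/2}\int_0^t e^{-\lambda(t-s)}\,\di \wh W(s)$ for a standard Brownian motion $\wh W$, then integrates by parts and bounds the result pathwise by $C N B^{1/2}\lambda^{-\delta}$ on the event that $\wh W$ is $\delta$-H\"older on $[0,T]$ with constant $N$; Fernique's theorem supplies the Gaussian tail for $N$, so all the randomness is concentrated in a single scalar and the $\lambda^{-\delta}$ factor comes out of the deterministic integral $\int_0^t (t-s)^{\delta}\lambda e^{-\lambda(t-s)}\,\di s\le \Gamma(1+\delta)\lambda^{-\delta}$. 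You instead peel on the natural grid of mesh $1/\lambda$ and apply the exponential (Bernstein) inequality for continuous local martingales on each cell, obtaining a $k$-independent tail, and then pay a union-bound prefactor of order $\lambda T$ which you absorb using $1-2\delta>0$. Your version is more elementary (no time change, no Fernique) and makes the role of $\delta<1/2$ very explicit as the margin needed to beat the logarithm of the number of cells; the price is the two-regime case analysis ($\lambda T\ge 1$ versus $\lambda T<1$) and the threshold argument in $R$ to keep $C_1,C_2$ depending only on $\delta$ and $T$. In the paper the same restriction $\delta<1/2$ enters instead as the H\"older regularity threshold of Brownian motion. Both yield the stated bound with constants of the required uniformity, so your argument is a valid substitute for the paper's proof.
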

\begin{proof} We have \be\label{decomp}
V(t)=e^{-\lambda t}V(0)+\int_0^te^{-\lambda(t-s)}\xi(s)\, \di s+\int_0^te^{-\lambda(t-s)}\, \di M(s),
\ee
where
$$
\xi(t)=\eta(t)+\lambda V(t)\leq A, \quad t\leq \tau.
$$
Clearly,
$$
\int_0^te^{-\lambda(t-s)}\xi(s)\, \di s \leq A\lambda^{-1}(1-e^{-\lambda t}), \quad t\leq \tau,
$$
and we have to study the third term in \eqref{decomp}, only. Without loss of generality, we can assume that  $M(0)=0$. Extending the probability space, if necessary,
we find a standard Brownian motion $\wt W$ such that
$$
\int_0^te^{\lambda s}\, \di M(s) = B^{1/2} \wt W \Big(\int_0^t e^{2 \lambda s}\frac{m(s)}{B}\,\di s\Big),\;t \ge 0.
$$
Therefore,
\begin{align*}
\sup_{0\le t \le \tau} \int_0^te^{-\lambda(t-s)}\, \di M(s)&= B^{1/2} \sup_{0\le t \le \tau} \Big\{e^{-\lambda t}  \wt W \Big(\int_0^t e^{2 \lambda s}\frac{m(s)}{B}\,\di s\Big)\Big\}\\
                                                           &=B^{1/2}\sup_{0\le t \le \tau} \Big\{e^{-\lambda t}  \sup_{0\le u \le t} \wt W \Big(\int_0^u e^{2 \lambda s}\frac{m(s)}{B}\,\di s\Big)\Big\}\\
                                                           &\le B^{1/2} \sup_{0\le t \le \tau} \Big\{e^{-\lambda t}  \sup_{0\le u \le t} \wt W \Big(\int_0^u e^{2 \lambda s}\,\di s\Big)\Big\},\\
\end{align*}
where the second equality holds true since $t \mapsto e^{-\lambda t}$ is decreasing. Next, there exists a standard Brownian motion $\wh W$ such that
$$
\wt W \Big(\int_0^u e^{2 \lambda s}\,\di s\Big)=\int_0^u e^{\lambda s}\,\di \wh W(s), \quad u\geq 0.
$$
Using once again that $t \mapsto e^{-\lambda t}$ is decreasing we get finally
$$
\sup_{0\le t \le \tau} \int_0^te^{-\lambda(t-s)}\, \di M(s)\leq B^{1/2} \sup_{0\le t \le \tau} \int_0^te^{-\lambda(t-s)}\, \di \wh W(s).
$$
For $N>0$ denote
$$
D_N^{\delta, T}=\Big\{ |\wh W(t)-\wh W(s)|\leq N |t-s|^\delta;\,s,t\leq T\Big\},
$$
and observe that, by Fernique's theorem,
\be\label{M_hoelder}
\P\left(D_N^{\delta, T}\right)\leq c_1 e^{-c_2 N^2}, \quad N>0
\ee
with constants $c_1, c_2$ which depend only on $\delta$ and $T$.
%
Observe that
$$
\int_0^te^{-\lambda(t-s)}\, \di \wh W(s) = \int_0^t\big(\wh W(t)-\wh W(s)\big)\lambda e^{-\lambda(t-s)}\, \di s+e^{-\lambda t} \wh W(t).
$$
Therefore, on the set set $D_N^{\delta, T}$, we obtain for $t\leq \tau\leq T$
$$
\ba
\int_0^te^{-\lambda(t-s)}\, \di M(s)&\leq N B^{1/2}\int_0^t(t-s)^\delta\lambda e^{-\lambda(t-s)}\, \di s
+NB^{1/2} e^{-\lambda t}t^\delta
\\&\leq N B^{1/2}\lambda^{-\delta}\left(\Gamma(\delta)+\sup_{x>0} x^\delta e^{-x}\right).
\ea
$$
Taking
$$
N=\left(\Gamma(\delta)+\sup_{x>0} x^\delta e^{-x}\right)R
$$
and using \eqref{M_hoelder}, the proof is complete.

\end{proof}

Denote $\CC^+_{real}$ the set of non-negative functions in $C([-r,0], \mathbb{R})$.
\begin{lem}\label{lem2}
  For each $\lambda>0, v\in \CC^+_{real}$, let $V^{\lambda,v}(t)\geq 0$, $t \ge -r$ be an adapted process with continuous paths such that
  $$
\di V^{\lambda,v}(t)=\eta^{\lambda,v}(t)\, \di t+\, \di M^{\lambda,v}(t),\quad t\geq 0, \quad  \bV^{\lambda,v}_0=v\in \CC^+_{real},
$$
where  $M^{\lambda,v}$ is a continuous local martingale and  for some constant $K \ge 0$
$$
\eta^{\lambda,v}(t)\leq K\|\bV^{\lambda,v}_t\|-\lambda V^{\lambda,v}(t), \quad {\di \langle M^{\lambda,v}\rangle(t)\over \di t}\leq K^2\|\bV^{\lambda,v}_t\|^2.
$$
Then for each $p \ge 1$ and $Q>0$ there exist $\lambda_{p,Q, K}>0$ and a constant $C_{p,Q,K}$ such that for $\lambda\geq \lambda_{p,Q, K}$
$$
\E \|\bV^{\lambda,v}_t\|^p \le C_{p,Q,K} e^{-Qt}\|v\|^p,\quad t \ge 0, \quad v\in \CC^+_{real}.
$$
\end{lem}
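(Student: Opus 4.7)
The plan is to handle the case $p=2$ by an iteration argument on the segment process, then extend to general $p\geq 1$: for $1\leq p<2$ via Jensen's inequality $\E\|\bV_t\|^p\leq(\E\|\bV_t\|^2)^{p/2}$, and for $p>2$ by repeating the argument with $V^p$ in place of $V^2$. The guiding intuition is that the one-sided dissipation $-\lambda V$ in the drift bound forces the system to behave like the deterministic delay ODE $\dot u(t)=-\lambda u(t)+K\sup_{s\in[t-r,t]}u(s)$, whose characteristic decay rate $\mu$ satisfies $\lambda-\mu=Ke^{\mu r}$ and grows like $\log\lambda/r$ as $\lambda\to\infty$, so any prescribed rate $Q>0$ is achievable for $\lambda$ sufficiently large.

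For $p=2$, I would first apply It\^o's formula to $V^2$ and use Young's inequality $2KV\|\bV_t\|\leq\lambda V^2+(K^2/\lambda)\|\bV_t\|^2$ together with $m\leq K^2\|\bV_t\|^2$ to obtain
\[
\tfrac{d}{dt}\E V(t)^2\leq -\lambda\E V(t)^2+C_K\E\|\bV_t\|^2
\]
with $C_K=2K^2$ independent of $\lambda$; variation of constants then gives $\E V(t)^2\leq e^{-\lambda t}\|v\|^2+C_K\int_0^t e^{-\lambda(t-s)}\E\|\bV_s\|^2\,ds$. Next, from the Duhamel representation
\[
V(u)\leq e^{-\lambda u}\|v\|+K\int_0^u e^{-\lambda(u-s)}\|\bV_s\|\,ds+|Z(u)|,\qquad Z(u):=\int_0^u e^{-\lambda(u-s)}\,dM(s),
\]
squaring and using Cauchy--Schwarz on the deterministic convolution reduces matters to bounding $\E\sup_{u\in[t-r,t]}|Z(u)|^2$. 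Combining this with the pointwise moment bound and iterating over intervals of length $r$ (using the Markov property of the segment process) produces a recursive inequality $\E\|\bV_{(n+1)r}\|^2\leq\rho(\lambda)\E\|\bV_{nr}\|^2$ with $\rho(\lambda)\to 0$ as $\lambda\to\infty$; interpolation at intermediate times via a standard $\lambda$-uniform Gronwall bound $\E\|\bV_s\|^2\leq C(s,K)\|v\|^2$ (valid for $\lambda\geq 2K$) then gives the sought decay at any rate $Q$ by choosing $\lambda$ so that $\rho(\lambda)\leq e^{-Qr}$.

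The main obstacle is the sharp estimate for $\E\sup_{u\in[t-r,t]}|Z(u)|^2$ in $\lambda$. Splitting $Z=Z_1+Z_2$ with $Z_1$ the stochastic integral over $[0,t-r]$ and $Z_2$ the one over $[t-r,t]$, the piece $Z_1$ is easily bounded by $O(1/\lambda)$ via Doob applied to the martingale $N_1(u)=e^{\lambda u}Z_1(u)$. The piece $Z_2$ is an Ornstein--Uhlenbeck-type noise on an interval of length $r$ with multiplicative diffusion $\leq K^2\|\bV_s\|^2$; the naive estimate via Doob on $N_2$ and $|Z_2(u)|\leq e^{-\lambda(u-(t-r))}|N_2(u)|$ produces a spurious $e^{\lambda r}/\lambda$-factor. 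I plan to circumvent this via an exponential-Lyapunov / sub-Gaussian argument: applying It\^o to $\exp(\beta Z_2^2)$ with $\beta\sim\lambda/K^2$ (and $\|\bV_s\|^2$ controlled by the a~priori bound), the It\^o drift coefficient $2\beta^2 m-2\beta\lambda$ becomes negative, yielding a uniform exponential moment $\E\exp(\beta Z_2(u)^2)\leq e^{C\beta r}$; this together with a discretisation / union-bound argument gives the sharper bound $\E\sup_{u\in[t-r,t]}|Z_2(u)|^2\leq C\log(\lambda r)/\lambda\cdot\sup_s\E\|\bV_s\|^2$, sufficient to close the bootstrap.
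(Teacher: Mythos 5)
Your overall architecture --- a per-interval contraction in $\lambda$ obtained from the Duhamel representation, a $\lambda$-uniform Gronwall/BDG a priori bound for interpolation, and iteration over blocks of length $r$ --- is the same as the paper's (the paper fixes $T=2r$, proves $\E\|\bV^{\lambda,v}_T\|^p\le \rho(\lambda)^p\|v\|^p$ with $\rho(\lambda)\to0$, and iterates). Where you genuinely diverge is in the key estimate for the running supremum of the stochastic convolution $Z(u)=\int_0^u e^{-\lambda(u-s)}\,\di M(s)$. The paper delegates this to Lemma \ref{lem1}: after a time change reducing $M$ to a Brownian motion, an integration by parts $\int_0^t e^{-\lambda(t-s)}\di \wh W(s)=\int_0^t(\wh W(t)-\wh W(s))\lambda e^{-\lambda(t-s)}\di s+e^{-\lambda t}\wh W(t)$ plus Fernique's theorem gives $\sup_{t\le\tau}|Z(t)|\lesssim B^{1/2}\lambda^{-\delta}$ with Gaussian tails for any $\delta<1/2$. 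Your route via exponential moments of $Z_2^2$ and a chaining/union bound would give the (slightly sharper) order $B^{1/2}(\log(\lambda r)/\lambda)^{1/2}$; either rate suffices. The paper's device is arguably cleaner because it avoids discretisation entirely and packages the tail bound in a reusable lemma.

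Two points in your sketch need repair before the argument closes. First, the intermediate claim $\E\exp(\beta Z_2(u)^2)\le e^{C\beta r}$ with $\beta\sim\lambda/K^2$ is too weak: Markov's inequality then controls $\P(|Z_2(u)|^2\ge s)$ only for $s\gtrsim Cr$, a threshold that does not decay in $\lambda$, so no union bound can extract $\log(\lambda r)/\lambda$ from it. What you need --- and what your own drift computation actually delivers if you additionally use $xe^{x}\ge e^{x}-1$ to absorb the $+\beta m\,e^{\beta Z_2^2}$ term into the negative $-2\beta(\lambda-\beta B)Z_2^2e^{\beta Z_2^2}$ term --- is the prefactor-free bound $\E\exp(\beta Z_2(u)^2)\le C$ uniformly in $u$ and $\lambda$; only then does the discretisation give the claimed order (and you still must control the oscillation of $Z_2$ between grid points of mesh $\sim1/\lambda$). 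Second, the diffusion bound $m(s)\le K^2\|\bV^{\lambda,v}_s\|^2$ is random, and the a priori bound $\E\sup_{s\le T}V^{\lambda,v}(s)^p\le C\|v\|^p$ is only in $L_p$, not almost sure, so ``$\|\bV_s\|^2$ controlled by the a priori bound'' cannot be fed directly into an exponential-moment computation. You must localize, e.g.\ via $\tau=\inf\{t:V^{\lambda,v}(t)\ge 2\|v\|\}\wedge T$ as the paper does, run the sharp estimate up to $\tau$, and then show $\P(\tau<T)$ is small (of order $\lambda^{-\delta p}$ in the paper) and handle the complementary event with Cauchy--Schwarz against the $\lambda$-uniform $L_{2p}$ bound. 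With these two repairs your proof goes through.
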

\begin{proof} First, we note that by the Gronwall lemma and the Burkholder-Davis-Gundy inequality for each $T>0, p\geq 1$ there exists a constant $C_{p,T,K}$ such that
\be\label{BDG}
\E \sup_{t\leq T} V^{\lambda,v}(t)^p\leq C_{p,T,K}\|v\|^p;
\ee
the argument here is the same as in Section \ref{s41}. That is, to prove the required statement it is enough to find for given $p, Q$ some $T=T_{p,Q,K}>0$ and $\lambda_{p,Q, K}>0$ such that
\be\label{est4}
\E \|\bV^{\lambda,v}_t\|^p \le C_{p,Q,K} e^{-QT}\|v\|^p, \quad \lambda\geq \lambda_{p,Q, K}, \quad v\in \CC^+_{real}.
\ee
We fix $T=2r$ and put
$$
\tau^{\lambda, v}=\inf\{t: |V^{\lambda,v}(t)|\geq 2\|v\|\}\wedge T.
$$
Then the assumption of Lemma 4.1 holds true for $V=V^{\lambda, v}, \tau=\tau^{\lambda, v}$ with $A=2K\|v\|$, $B=4K^2\|v\|^2$. Note that this lemma yields
$$
V^{\lambda,v}(t)\leq e^{-\lambda t}v(0)+{A\over \lambda}+{B^{1/2}\over \lambda^{\delta}}\Xi^{\lambda,v}(t), \quad t\leq\tau^{\lambda,v}
$$
with a fixed $\delta<1/2$ and
$$
\|\sup_{t\leq T}\Xi^{\lambda,v}(t)\|_{L_p}\leq C_{p,T}'.
$$
We will take $\lambda_{p,Q,K}\geq 4K$, then for $\lambda\geq \lambda_{p,Q,K}$
$$
V^{\lambda,v}(t)1_{t\leq \tau^{\lambda,v}}\leq
\left(e^{-\lambda t}+{2K\over \lambda}+{2K\over \lambda^\delta}\Xi^{\lambda,v}(t)\right)\|v\|\leq \left({3\over 2}+{2K\over \lambda^\delta}\Xi^{\lambda,v}(t)\right)\|v\|,
$$
which yields
$$
\P(\tau^{\lambda,v}<T)\leq \P\left(\Xi^{\lambda,v}(\tau^{\lambda,v})\geq {\lambda^\delta\over 4K}\right)\leq C_{p,T}'(4K)^p \lambda^{-\delta p}.
$$
Then by \eqref{BDG} and the Cauchy inequality
$$
\ba
\E \sup_{t\in[0, T]}V^{\lambda,v}(t)^p1_{\tau^{\lambda,v}<T}&\leq \left(\E\sup_{t\leq T} V^{\lambda,v}(t)^{2p}\right)^{1/2}\left(\P(\tau^{\lambda,v}<T)\right)^{1/2}
\leq  {(C_{2p,T}C_{p,T}'(4K)^p)^{1/2} \over \lambda^{p\delta/2}}\|v\|^p.
\ea
$$
Combining these calculations, we get
$$
V^{\lambda,v}(t)\leq \left(e^{-\lambda t}+{2K\over \lambda}+\left({2K\over \lambda^{\delta/2}}\vee \sqrt{2K\over \lambda^{\delta/2}}\right)\Delta^{\lambda,v,K}\right)\|v\|, \quad t\in [0, T]
$$
with
$$
\|\Delta^{\lambda,v,K}\|_{L_p}\leq C_{p,T}'.
$$
In particular
$$
\|\bV^{\lambda,v}_T\|\leq \left(e^{-\lambda T/2}+{2K\over \lambda}+{2K\over \lambda^{\delta/2}}\Delta^{\lambda,v,K}\right)\|v\|,
$$
which  easily yields \eqref{est4} for $\lambda\geq \lambda_{p,Q,K}$ and $\lambda_{p,Q,K}$ large enough.
\end{proof}

The following lemma can be proved by essentially the same argument; we leave the details for the reader.

\begin{lem}\label{lem3} Let $k>1$, and for each $\lambda>0, v_1, \dots, v_k\in \CC^+_{real}$ let $V^{\lambda,v_1, \dots, v_k}(t)\geq 0$, $t \ge -r$ be an adapted process with continuous paths such that
  $$
  \di V^{\lambda,v_1, \dots, v_k}(t)=\eta^{\lambda,v_1, \dots, v_k}(t)\, \di t+\, \di M^{\lambda,v_1, \dots, v_k}(t),\quad t\geq 0, \quad  \bV^{\lambda,v_1, \dots, v_k}_0=0,
  $$
  where $M^{\lambda,v_1, \dots, v_k}$ is a continuous local martingale and, for some constant  $K \ge 0$,
$$
\eta^{\lambda,v_1, \dots, v_k}(t)\leq K\|\bV^{\lambda,v_1, \dots, v_k}_t\|-\lambda V^{\lambda,v_1, \dots, v_k}(t)+L^{\lambda,v_1, \dots, v_k}(t),
$$
$$ {\di \langle M^{\lambda,v_1, \dots, v_k}\rangle(t)\over \di t}\leq K^2\|\bV^{\lambda,v_1, \dots, v_k}_t\|^2+N^{\lambda,v_1, \dots, v_k}(t).
$$
Fix $p \ge 1$ and assume that the non-negative adapted processes $L^{\lambda,v_1, \dots, v_k}(t)$, $N^{\lambda,v_1, \dots, v_k}(t), t\geq 0$ are such that
for each $Q>0$ there exist $\lambda_{p,Q}^0>0$ and a constant $C_{p,Q}^0$ such that
$$
\E \|\mathbf{L}^{\lambda,v_1, \dots, v_k}_t\|^p \le C_{p,Q}^0 e^{-Qt}\|v_1\|^p\dots\|v_k\|^p, \quad \E \|\mathbf{N}^{\lambda,v_1, \dots, v_k}_t\|^{2p} \le C_{p,Q}^0 e^{-Qt}\|v_1\|^{p}\dots\|v_k\|^{p},\quad t \ge 0
$$
for any $\lambda\geq \lambda_{p,Q}^0$ and $v_1, \dots, v_k\in \CC^+_{real}$.

Then for each $Q>0$ there exist $\lambda_{p,Q,K}>0$ and a constant $C_{p,Q,K}$ such that for $\lambda\geq \lambda_{p,Q,K}$
$$
\E \|\bV^{\lambda,v_1, \dots, v_k}_t\|^p \le C_{p,Q,K} e^{-Qt}\|v_1\|^p\dots\|v_k\|^p,\quad t \ge 0, \quad v_1, \dots, v_k\in \CC^+_{real}.
$$
\end{lem}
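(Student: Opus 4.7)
The plan is to reprise the proof of Lemma \ref{lem2}, treating $L$ and $N$ as exogenous forcings whose $L_p$-norms decay exponentially at any prescribed rate. Because $\bV_0^{\lambda,v_1,\dots,v_k}=0$, all of the mass of the process originates from $L$ and $N$, and the factor $\|v_1\|^p\cdots\|v_k\|^p$ required on the right-hand side will be inherited directly from the hypothesized bounds on $\mathbf{L}$ and $\mathbf{N}$.

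First, exactly as in the derivation of \eqref{BDG}, It\^o's formula together with Burkholder--Davis--Gundy and Gronwall yields the a priori bound
\[
\E\sup_{t\leq T}V^{\lambda,v_1,\dots,v_k}(t)^p\leq C_{p,T,K}\|v_1\|^p\cdots\|v_k\|^p
\]
for every fixed $T>0$. I then fix $T=2r$, pick $\delta\in(0,1/2)$, and introduce a stopping time $\tau^{\lambda}\leq T$ given as the first time either $V$ itself, or the running suprema of $\|\mathbf{L}^{\lambda}\|$ and $\|\mathbf{N}^{\lambda}\|$, exceed a threshold of order $\lambda^{-\delta/2}\|v_1\|\cdots\|v_k\|$. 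On $[0,\tau^{\lambda}]$ the assumptions of Lemma \ref{lem1} hold with $A$ and $B$ controlled by this threshold, so that lemma (using $V(0)=0$) yields an exponential tail bound for $\sup_{t\leq \tau^{\lambda}}V(t)$. The event $\{\tau^{\lambda}<T\}$ has probability at most an arbitrarily high negative power of $\lambda$, by Chebyshev combined with the hypothesized moment bounds on $\mathbf{L},\mathbf{N}$, and its contribution to $\E\|\bV_T\|^p$ is controlled by Cauchy--Schwarz against Step~1. Putting everything together yields
\[
\E\|\bV_T^{\lambda,v_1,\dots,v_k}\|^p\leq C\lambda^{-\delta p/2}\|v_1\|^p\cdots\|v_k\|^p
\]
for $\lambda$ sufficiently large.

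The global-in-$t$ bound is then obtained by iterating over intervals of length $T$: on the $j$-th interval the initial datum is $\bV_{jT}^{\lambda}$ (no longer zero), so the analogue of Lemma \ref{lem1} contributes an additional term $e^{-\lambda T}V(jT)$, while the forcing moments on $[jT,(j+1)T]$ are of order $e^{-Q'jT}\|v_1\|^p\cdots\|v_k\|^p$ with $Q'$ the decay rate one is free to choose in the hypothesis. This produces a recursion
\[
\E\|\bV_{(j+1)T}^{\lambda}\|^p\leq Ce^{-\lambda T/2}\,\E\|\bV_{jT}^{\lambda}\|^p+Ce^{-Q'jT}\|v_1\|^p\cdots\|v_k\|^p,
\]
which, summed as a discrete Duhamel series with $\lambda$ chosen so that both $\lambda T/2$ and $Q'$ exceed $Q$, produces the claimed estimate at the grid times; continuous-time interpolation follows from Step~1. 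The one technical subtlety is that $\mathbf{L}$ and $\mathbf{N}$ need not be independent of $\bV$, so one cannot invoke the Markov property in the iteration step; however, since only unconditional $L_p$-bounds on $\mathbf{L},\mathbf{N}$ enter the estimation, this coupling causes no harm.
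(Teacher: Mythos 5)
Your overall architecture --- the a priori Gronwall/Burkholder--Davis--Gundy bound, a stopping time that freezes the coefficients so that Lemma \ref{lem1} applies with deterministic $A$ and $B$, a Chebyshev/Cauchy--Schwarz treatment of the bad event, a one-step estimate on $[0,T]$ with $T=2r$, and a discrete Duhamel iteration --- is exactly the adaptation of the proof of Lemma \ref{lem2} that the paper has in mind (the paper gives no details, saying only that ``essentially the same argument'' works). However, one step as written would fail: you threshold the running suprema of $\|\mathbf{L}^{\lambda,v_1,\dots,v_k}\|$ and $\|\mathbf{N}^{\lambda,v_1,\dots,v_k}\|$ at a level of order $\lambda^{-\delta/2}\|v_1\|\cdots\|v_k\|$ and claim that $\P(\tau^{\lambda}<T)$ is an arbitrarily high negative power of $\lambda$ ``by Chebyshev combined with the hypothesized moment bounds''. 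Those bounds give $\E\|\mathbf{L}_t\|^p\le C^0_{p,Q}e^{-Qt}\|v_1\|^p\cdots\|v_k\|^p$ with a constant that is uniform in $\lambda\ge\lambda^0_{p,Q}$ but does \emph{not} tend to $0$ as $\lambda\to\infty$; the decay is in $t$, not in $\lambda$ (and in the application to Theorem \ref{t6}, $D$ and $S$ are indeed of order $\|z_1\|\cdots\|z_k\|$ for small $t$ no matter how large $\lambda$ is). Hence on $[0,T]$ the typical size of $\|\mathbf{L}_t\|$ is of order $\|v_1\|\cdots\|v_k\|$, and Chebyshev yields only $\P\bigl(\sup_{t\le T}\|\mathbf{L}_t\|>\lambda^{-\delta/2}\|v_1\|\cdots\|v_k\|\bigr)\le C\lambda^{\delta p/2}$, a bound that grows with $\lambda$. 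With your thresholds the bad event is therefore not rare, and the Cauchy--Schwarz step does not produce the factor $\lambda^{-\delta p/2}$ you assert for $\E\|\bV_T\|^p$.

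The repair is to point the thresholds for the forcing the other way: stop when $\|\mathbf{L}\|$ (resp.\ $\|\mathbf{N}\|$) exceeds a \emph{large} multiple $R$ of its natural $L_p$-size, so that the bad event has probability $O(R^{-p})$ by Chebyshev, and then absorb the resulting contributions $A\lambda^{-1}$ and $B^{1/2}\lambda^{-\delta}$ from Lemma \ref{lem1} by taking $\lambda$ large relative to $R$ --- exactly as the term $2K/\lambda$ is absorbed in the proof of Lemma \ref{lem2}. The one-step conclusion then reads $\E\|\bV^{\lambda,v_1,\dots,v_k}_T\|^p\le\varepsilon(\lambda)^p\|v_1\|^p\cdots\|v_k\|^p$ with $\varepsilon(\lambda)\to0$; note that, as in Lemma \ref{lem2}, this factor decays only polynomially in $\lambda$, not like $e^{-\lambda T/2}$ as in your recursion, but this is immaterial since for fixed $T$ one only needs it below $e^{-QT}$. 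With that correction your iteration --- including the correct observation that only unconditional moment bounds on $\mathbf{L},\mathbf{N}$ enter, so no Markov property is needed --- goes through and recovers the paper's intended argument.
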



\begin{thebibliography}{99}


\bibitem{AVer}
\textsc{N.~Abourashchi}, \textsc{A.Yu.~Veretennikov} (2010). On stochastic averaging and mixing. \textit{Theor. Stoch. Process.} \textbf{16}, 111--129.

\bibitem{BKS} \textsc{H.~Bessaih},  \textsc{R.~Kapica}, \textsc{D.~Szarek} (2014).
\newblock Criterion on stability for Markov processes applied to a model with jumps,
\newblock {\em Semigroup Forum} {\bf 88}, 76--92.



\bibitem{Bismut}  \textsc{J.M.~Bismut} (1981). Martingales, the Malliavin calculus and hypoellipticity under general H\"ormander's conditions.
\emph{Z. Wahrscheinlichkeitstheorie verw. Gebiete} \textbf{56}, 469--505.

\bibitem{BodKul08}
\textsc{S.~Bodnarchuk}, \textsc{A.~Kulik} (2009). Conditions for existence and smoothness of the distribution density of
an Ornstein-Uhlenbeck process with L\'evy noise. \textit{Theor. Probability and Math. Statist. } \textbf{79}, 23--38.


\bibitem{Bu14}\textsc{O.~Butkovsky} (2014). Subgeometric rates of convergence of Markov processes in the Wasserstein metric.
\textit{Ann. Appl. Probab.} \textbf{24}, 526--552.

\bibitem{BKS18}\textsc{O.~Butkovsky, A.~Kulik, and M.~Scheutzow} (2018). Generalized couplings and ergodic rates for SPDEs and other Markov models.
arXiv:1806.00395




\bibitem{BS}
\textsc{O.~Butkovsky}, \textsc{M.~Scheutzow} (2017). Invariant measures for stochastic functional differential equations. \textit{Electr. J. Probab.} \textbf{22} 1--23.

\bibitem{Cer99}  \textsc{S.~Cerrai} (1999).
Differentiability of Markov semigroups for stochastic
reaction-diffusion equations and applications to control, \emph{Stochastic Processes and their Applications} 83 (1999) 15--37.


\bibitem{Dudley} \textsc{R.M.~Dudley} (2002). \textit{Real Analysis and Probability}. Cambridge Univ. Press, Cambridge.

\bibitem{DFM}
\textsc{A.~Durmus},  \textsc{G.~Fort}, \textsc{E.~Moulines} (2016). Subgeometric rates of convergence in Wasserstein distance for Markov chains. \textit{Ann. Inst. Henri Poincar\'{e} Probab. Stat.} \textbf{52} 1799--1822.



\bibitem{ElLi} \textsc{K.D.~Elworthy}, \textsc{X.M.~Li} (1994). Formulae for the derivatives of heat semigroups. \emph{J. Funct. Anal.} 125,
252--286.

\bibitem{ERS09}
\textsc{A.~Es-Sarhir}, \textsc{M.-K.~von Renesse}, \textsc{M.~Scheutzow} (2009). Harnack inequality for functional SDEs with bounded memory. \textit{Electron. Comm. Probab.} \textbf{14} 560--565.


\bibitem{GSkor72} \textsc{I.~Gikhman}, \textsc{A. Skorohod} (1972). Stochastic Differential Equations, Springer.

\bibitem{H02}
\textsc{M.~Hairer} (2002). Exponential mixing properties of stochastic PDEs through asymptotic coupling.  \textit{Probab. Theory Related Fields}. \textbf{124} 345--380.

\bibitem{HMS11}
\textsc{M.~Hairer}, \textsc{J.C.~Mattingly}, \textsc{M.~Scheutzow} (2011). Asymptotic coupling and a general form of Harris’ theorem with applications to stochastic delay equations. \textit{Probab. Theory Related Fields}. \textbf{149} 223--259.

\bibitem{KPS10}
  \textsc{T.~Komorowski}, \textsc{S.~Peszat}, and \textsc{D.~Szarek}  (2010).
   \newblock On ergodicity of some  Markov processes,
   \newblock{\em Ann. Probab.}  {\bf 38}, 1401--1443.

\bibitem{K17}
\textsc{A.~Kulik} (2018). \textit{Ergodic Behavior of Markov Processes}. De Gruyter, Berlin.

\bibitem{LS}
\textsc{R.~Liptser}, \textsc{A.N.~Shiryaev} (2013). \textit{Statistics of Random Processes: I}. Springer, Berlin.

\bibitem{M84}
\textsc{S.E.A.~Mohammed} (1984). \textit{Stochastic functional differential equations}. Pitman, Boston.

\bibitem{PP14}  \textsc{G.~Pag\`es,  F.~Panloup} (2014). A mixed-step algorithm for the approximation of the stationary regime of a diffusion. \emph{Stochastic
Process. Appl.}, \textbf{124}(1), 522--565.

\bibitem{RS08}
\textsc{M.-K. von Renesse}, \textsc{M.~Scheutzow} (2010).
Existence and uniqueness of solutions of stochastic functional differential equations. \textit{Random Oper. Stoch. Equ.}
\textbf{18} 267--284.


\bibitem{Stroock_Varad} \textsc{D.W.~Stroock}, \textsc{S.R.S.~Varadhan} (1979). \textit{Multidimensional Diffusion Processes}. Springer, Berlin.

\bibitem{Thor}
\textsc{H.~Thorisson} (2000). \textit{Coupling, Stationarity, and Regeneration}. Springer, NY.

\bibitem{Ts}
\textsc{A.B. Tsybakov} (2009). \textit{Introduction to Nonparametric Estimation}. N.Y.: Springer.


\bibitem{Wang}
\textsc{F.-Y.~Wang} (2011). Harnack inequality for SDE with multiplicative noise and extension to Neumann semigroup on nonconvex manifolds. \textit{Ann. Prob.} \textbf{39} 1449--1467.









\end{thebibliography}
\end{document}